\documentclass[11pt]{amsart}
\usepackage{}
\usepackage{a4wide}
\usepackage{mathrsfs}
\usepackage{amsfonts}
\usepackage{amsmath,extpfeil}
\usepackage{stmaryrd}
\usepackage{amssymb}
\usepackage{amsthm}
\usepackage{mathrsfs}
\usepackage{url}
\usepackage{amsfonts}
\usepackage{amscd}
\usepackage{indentfirst}
\usepackage{enumerate}
\usepackage{amsmath,amsfonts,amssymb,amsthm}
\usepackage{amsmath,amssymb,amsthm,amscd}
\usepackage{graphicx,mathrsfs}
\usepackage{appendix}
\usepackage[numbers,sort&compress]{natbib}
\usepackage{color}
\usepackage[colorlinks, linkcolor=blue, citecolor=blue]{hyperref}

\textwidth = 6.0 in
 \textheight = 8.5in
 \oddsidemargin = 0.30 in
\evensidemargin = 0.30 in
 \voffset=-30pt

\newcommand{\R}{\mathbb{R}}

\newcommand{\I}{\mathbb I}

\setcounter{equation}{0}

\newtheorem{theorem}{Theorem}[section]
\newtheorem{lemma}{Lemma}[section]
\newtheorem{corollary}{Corollary}[section]
\newtheorem{example}{Example}[section]

\newtheorem{definition}{Definition}[section]
\newtheorem{remark}{Remark}[section]    
\numberwithin{equation}{section}

\newcommand\vep{\eps }

\newcommand\eps{\varepsilon}
\newcommand{\p}{\partial}
\newcommand{\dist}{\text{dist}}

\newcommand{\beq}{\begin{equation}}
\newcommand{\eeq}{\end{equation}}


\parskip = 0.25in

\begin{document}

\title[Global $C^{1,\alpha}$ regularity]
{Global $C^{1,\alpha}$ regularity for Monge-Amp\`ere equations on planar convex domains}

\author[Q. Han]{Qing Han}
\address [Qing Han]{Department of Mathematics, University of Notre Dame, Notre Dame, IN 46556, USA}
\email{qhan@nd.edu}

\author[J. Liu]
{Jiakun Liu}
\address [Jiakun Liu]
	{School of Mathematics and Statistics,
	The University of Sydney,
	Camperdown, NSW 2006, AUSTRALIA}
\email{jiakun.liu@sydney.edu.au}

\author[Y. Zhou]{Yang Zhou}
\address [Yang Zhou]{The Institute of Mathematical Sciences, The Chinese University of Hong Kong, Shatin, NT, Hong Kong}
\email{yzhou@ims.cuhk.edu.hk}

\thanks{Q. Han acknowledges the support of NSF Grant DMS-2305038; J. Liu acknowledges the support of ARC FT220100368 and DP230100499; and Y. Zhou acknowledges the support of GS Charity Foundation Limited-Fundamental Mathematics Talent Development of IMS}

\subjclass[2020]{35J96, 35J25, 35B65.}

\keywords{Monge-Amp\`ere equation, convex envelope, global $C^{1,\alpha}$ regularity.}

\begin{abstract}
In this paper, we establish the global H\"older gradient estimate for solutions to the Dirichlet problem of the Monge-Amp\`ere equation $\det D^2u = f$ on strictly convex but not uniformly convex domain $\Omega$.  
\end{abstract}

\date{\today}
\maketitle

\baselineskip=16.4pt
\parskip=3pt


\section{Introduction} 

We consider the following Dirichlet problem for the Monge-Amp\`ere equation:
	\begin{equation}\label{u-fp}
	\begin{split}
		\det D^2u &=f \quad\text{in} \ \ \Omega,\\ 
		u & =\varphi \quad\text{on} \  \p\Omega,
	\end{split}
	\end{equation}
where $\Omega\subset\R^n$ is a bounded convex domain, $f\geq0$ is a measurable function in $\Omega$, and $\varphi$ is a given function on $\p\Omega$.

Caffarelli established the interior $C^{1,\alpha}$ regularity for strictly convex solution of \eqref{u-fp} under the assumption that $f$ satisfies a doubling condition, that is, there exists a constant $C_b > 0$ such that for any convex subset $D \subset \Omega$,
	\begin{equation}\label{dc}
		\int_{D}f (x) dx \le  C_b \int_{\frac12D}f (x) dx,
	\end{equation}
where $\frac12 D$ denotes the dilation of $D$ by a factor of $\frac12$ with respect to its mass centre (see \cite{C-1991}).
This implies $u\in C^{1,\alpha}(\Omega')$ for any $\Omega'\Subset\Omega$, with $\alpha\in(0,1)$ and the $C^{1,\alpha}$ norm depending on the Lipschitz norm and strict convexity of $u$.

Global $C^{1,\alpha}$ regularity for the Dirichlet problem \eqref{u-fp} has become a topic of great interest due to its fundamental importance in the study of PDEs with measurable coefficients, as well as its applications in optimisation, stochastic control, and computational geometry (see, e.g., \cite{GT,CTW-2022} and references therein). 

Under some restrictive conditions including $\varphi, \p\Omega \in C^{2,1}$ and $f^{1/n}\in C^{1,1}$, the global $C^{1,\alpha}$ regularity was obtained in \cite{B-1999} when $\Omega$ is uniformly convex. 
Assuming $\lambda\leq f\leq \Lambda$ for two constants $\Lambda\geq\lambda>0$, the boundary $C^{1,\alpha}$ regularity was obtained in \cite{SZ-2020} under an additional quadratic separation condition: If $\{x_{n+1}=0\}$ is the tangent plane of $u$ at $x_0\in\p\Omega$, then
	\begin{equation}\label{u-sq}
		\varphi(x) \ge c_0 |x-x_0|^2 \quad \text{ for any }\,x\in \p\Omega \text{ near }x_0,
	\end{equation}
where $c_0>0$ is a constant. This includes the case that $\Omega$ is uniformly convex, and $\p\Omega, \varphi \in C^{2,\beta}$ for some $\beta\in(0,1)$.
Recently, without condition \eqref{u-sq}, the global $C^{1,\alpha}$ regularity for \eqref{u-fp} was derived by Caffarelli, Tang and Wang \cite{CTW-2022} assuming that $f$ satisfies the doubling condition \eqref{dc} and allowing the degenerate case $f\geq0$. 

It is worth pointing out that $\Omega$ is also assumed to be uniformly convex in \cite{CTW-2022}. 
However, practical applications, such as image recognition and computational geometry often involve domains $\Omega$ that are convex but not uniformly convex (see \cite{PS-book}).
This raises the question whether the global $C^{1,\alpha}$ regularity can be achieved for \eqref{u-fp} in non-uniformly convex domains and under what minimal conditions on $\varphi$ and $f$.

In this paper, we investigate these questions by assuming $\Omega\subset\R^2$ and focusing on points of degeneracy at the boundary, where the curvature vanishes.
By a change of coordinates, we assume that the curvature of $\p\Omega$ vanishes at the origin and locally $\Omega$ is given by
	\begin{equation}\label{x2-rho-1}
		\Omega = \{x=(x_1,x_2) : x_2 > \rho(x_1)\}
	\end{equation}
for a convex function $\rho\geq0$ satisfying
	\begin{equation}\label{x2-rho-2}
		\rho(0)=0, \quad \rho'(0)=0,\quad\text{and} \quad \rho''(0)\geq0.
	\end{equation}
	
For the boundary $C^{1,\alpha}$ estimate, in addition to the doubling condition \eqref{dc} we also assume that $f$ is bounded from above, namely 
	\begin{equation}\label{f0}
		0 \le f(x) \le f_0 \ \ \text{ for $x$ near the boundary $\p\Omega$},
	\end{equation}
where  $f_0$ is a constant.

To illustrate challenges posed by degenerate boundaries, consider the following examples.

\begin{example}\label{eg1.1}
\emph{
Let $\rho(x_1)=x_1^4$ and $u(x)=x_2^{1+\alpha}$ with $0<\alpha<1$. 
Here, $u$ is a convex solution of the homogeneous equation $\det D^2u=0$. 
The boundary data $\varphi$ belongs to the class $C^{4+4\alpha}(\p\Omega)$, and the solution $u$ is in $C^{1,\alpha}(\overline\Omega)\cap C^\infty(\overline\Omega\setminus\{0\})$. Analogously, we can also consider the convex solution $x_1^{2k}+x_2^{1+\alpha}$ of the non-homogeneous equation. 
This example shows that $\varphi$ requires higher regularity (at least $C^{4,\beta}$) to ensure $u\in C^{1,\alpha}$. 
}
\end{example}

The above example suggests that higher the regularity of $\varphi$ is necessary, depending on the degeneracy of $\p\Omega$. 
We confirm this with a more detailed example from \cite{CTW-2022}. 
\begin{example}
\emph{
Consider a convex domain $\Omega$ in $\{0<x_2<1\}$,
symmetric in $x_1$, with $\p\Omega$ locally given by $\rho(x_1)=x_1^4$. 
For
	\begin{equation*}
		w(x)=-\frac{x_2}{\log x_2} + \delta x_1^2 x_2^\sigma
	\end{equation*}
for $\delta>0$ small and $\sigma\in(1/2,1)$ close to $1/2$, 
we have
	\begin{equation*}
		\det D^2w \geq \frac{\delta x_2^{\sigma-1}}{\log^2x_2}
	\end{equation*}
for $(x_1,x_2)\in\Omega$. 
Let $u$ be the solution to \eqref{u-fp} with $f$ being a small positive constant and $u=w$ on $\p\Omega$.
Then $u(0)=0$, and $u$ is symmetric in $x_1$ due to the symmetry of $\Omega$. By the comparison principle and the convexity of $u$, we get
	\begin{equation*}
		w(x_1,x_2) \leq u(x_1,x_2) \leq w(x_2^{1/4},x_2) \quad \text{ for any }\, (x_1,x_2)\in\Omega.	
	\end{equation*}
Hence, $Du(0)=0$ and $u(0,x_2) \geq x_2/|\log x_2|$. Note that the function $w\in C^4$ on $\p\Omega$ but $u$ is not $C^{1,\alpha}$ for any $\alpha>0$.
}
\end{example}
The above example indicates, again, that for the global $C^{1,\alpha}$ regularity, the boundary data needs a higher regularity depending on the degeneracy of convexity of $\p\Omega$. 
One can construct a similar example showing that if $\rho(x_1)=x_1^{2k}$ near $0\in\p\Omega$, in order for $u$ to be $C^{1,\alpha}$ regular, the boundary data $\varphi$ needs, at least, to be $C^{2k,\beta}$ for some $\beta>0$.  
It is worth noting that the degenerate case is significantly more complex than that of uniformly convex domains. See Remark \ref{rmkB1.1} below.

In order to state our main result, we introduce the following notation and definitions to describe the degeneracy of convexity of the boundary $\p\Omega$.

\begin{definition}\label{defdeg}
Let $k\geq4$ be an even integer. We say $\p\Omega$ 
is {\em $k$-order degenerate} at $0$ if
	\begin{equation}\label{k-deg}
		\rho''(0)=0, \ \ \rho'''(0)=0, \cdots, \rho^{(k-1)}(0)=0  \quad\text{but}\quad \rho^{(k)}(0)\ne 0,
	\end{equation}
where $\rho^{(i)}(x_1) = \frac{\p^i \rho(x_1)}{\p x_1^i}$, $i \in \mathbb N$.
\end{definition}

Let $\beta \in (0,1]$ and $\p\Omega\in C^{k+\beta}$. 
If $\p\Omega$ is $k$-order degenerate at $0$, we have 
	\begin{equation}\label{k+b-deg}
		\bar \rho(x_1) := \rho(x_1) - \frac{1}{k!} \rho^{(k)}(0) x_1^k \leq O\big(|x_1|^{k+\beta}\big).
	\end{equation} 
We assume that $\varphi\in C^{k,\beta}(\p\Omega)$ satisfies the following degeneracy condition:
\begin{itemize}
  \item[$(\mathcal P1)$] $\frac{\p^i \varphi}{\p x_1^i}(0) =0$\quad for $i=2,4, \cdots, k-2$.
\end{itemize}
Here, $\varphi$ is viewed as a function of $x_1$. 

Our first main theorem can be stated as below.

\begin{theorem}\label{mt-1}
Let $u$ be a convex solution of \eqref{u-fp} with $f$ satisfying \eqref{dc} and \eqref{f0}.
Assume $\Omega$ is bounded and convex, $\p\Omega\in C^{k+\beta}$ is $k$-order degenerate at $0$, and $\varphi\in C^{k,\beta}(\p\Omega)$ satisfies $(\mathcal P1)$.
Then, the solution $u$ is $C^{1,\alpha}(\overline\Omega)$ for some $\alpha \in (0,\beta/k)$, and 
	\begin{equation*}
		\|u\|_{C^{1,\alpha}(\overline\Omega)} \le C,
	\end{equation*}
where $\alpha, C$ are positive constants depending only on $C_b, f_0, k, \beta,\Omega$ and $\|\varphi\|_{C^{k,\beta}(\p\Omega)}$.
\end{theorem}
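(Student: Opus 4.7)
The overall strategy is local-to-global. Away from $k$-degenerate points of $\partial\Omega$, the boundary is uniformly convex in a neighborhood and the global $C^{1,\alpha}$ estimate follows directly from the Caffarelli--Tang--Wang theorem \cite{CTW-2022}. It therefore suffices to establish a pointwise $C^{1,\alpha}$ estimate of $u$ at a single degenerate boundary point, which I take (after translation and rotation) to be $0\in\partial\Omega$ in the coordinates \eqref{x2-rho-1}-\eqref{x2-rho-2}, and combine via a standard covering argument.

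At $0$, the first step is an affine normalization. I subtract the affine function $L(x)=\varphi(0)+\varphi'(0)x_1+\lambda x_2$ with $\lambda:=\varphi^{(k)}(0)/\rho^{(k)}(0)$, and set $\bar u:=u-L$, $\bar\varphi:=\varphi-L|_{\partial\Omega}$. Then $\bar u$ is a convex solution of the same Monge--Amp\`ere equation with $\bar u(0)=0$ and, possibly after a further small tilt of $L$ so that $L$ is a supporting hyperplane of $u$ at $0$, $\bar u\ge0$ on $\overline\Omega$. The condition $(\mathcal P1)$ eliminates the even derivatives $\varphi^{(2)}(0),\ldots,\varphi^{(k-2)}(0)$; the existence of a supporting hyperplane forces the odd derivatives $\varphi^{(3)}(0),\ldots,\varphi^{(k-1)}(0)$ to vanish as well (otherwise the leading odd term of $\bar\varphi$ changes sign near $0$ and contradicts $\bar\varphi\ge 0$); and the choice of $\lambda$ cancels the $x_1^k$ term of $\varphi$ against $\lambda\rho(x_1)$. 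Combined with \eqref{k+b-deg} and $\varphi\in C^{k,\beta}$, this yields the key boundary estimate
\[
0\le \bar\varphi(x_1,\rho(x_1))\le C|x_1|^{k+\beta}\le C'\, x_2^{\,1+\beta/k}\qquad\text{on }\partial\Omega\text{ near }0,
\]
where the last inequality uses $x_2=\rho(x_1)\asymp|x_1|^k$.

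For any $\alpha\in(0,\beta/k)$ I then compare $\bar u$ with the explicit upper barrier
\[
v(x):=M\, x_2^{\,1+\alpha},
\]
which is convex with $\det D^2 v\equiv 0\le f$. Because $1+\alpha<1+\beta/k$, on $\partial\Omega\cap B_{\delta_0}(0)$ one has $v\ge\bar\varphi$ once $M$ is large, and on $\partial\Omega\setminus B_{\delta_0}(0)$, where $\bar\varphi$ is bounded and $x_2$ is uniformly bounded below (no other point of $\partial\Omega$ being degenerate), $v\ge\bar\varphi$ as well after enlarging $M$. The Monge--Amp\`ere comparison principle then gives $\bar u\le v$ in $\Omega$, so together with $\bar u\ge 0$ one obtains the pointwise Taylor bound
\[
|\bar u(x)|\le M\,|x|^{1+\alpha}\qquad\text{near }0.
\]
This Taylor estimate, together with convexity of $u$, yields $|Du(x)-Du(0)|\le C|x|^\alpha$ via a standard supporting-hyperplane argument (optimizing directional difference quotients around $x$ at radius $\sim|x|/\alpha$), and combined with the $C^{1,\alpha}$ regularity on $\overline\Omega\setminus\{0\}$ closes the proof.

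The \emph{main obstacle} is the normalization step: showing that, under just $(\mathcal P1)$ plus convexity and $\varphi\in C^{k,\beta}$, one can pick the supporting plane $L$ so that the normalized boundary data satisfies the clean decay $\bar\varphi\le C|x_1|^{k+\beta}$. This requires simultaneously exploiting the prescribed even-derivative vanishing of $\varphi$, deriving odd-derivative vanishing from convexity, and tuning $\lambda$ so as to cancel the last $x_1^k$ term against $\lambda\rho$; the matching of scales $|x_1|^{k+\beta}$ versus $x_2^{\,1+\beta/k}$ on $\partial\Omega$ is precisely what forces the exponent $\alpha<\beta/k$ appearing in the theorem, and is the key new ingredient beyond the uniformly convex case of \cite{CTW-2022}.
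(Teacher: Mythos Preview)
Your proposal has two genuine gaps, and the second one is fatal to the overall strategy.

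\textbf{The barrier at $0$ does not close.} You want $L$ to be simultaneously (a) a supporting hyperplane of $u$ at $0$ (so that $\bar u\ge0$) and (b) chosen with $\lambda=\varphi^{(k)}(0)/\rho^{(k)}(0)$ (so that $\bar\varphi(x_1)=O(|x_1|^{k+\beta})$). These are in general incompatible: the supporting plane at $0$ is unique and its normal slope $u_2(0)$ is not known a priori to equal $\lambda$. If you take $L$ to be the supporting plane, then $(\mathcal P1)$ and convexity give $\bar\varphi^{(i)}(0)=0$ for $i\le k-1$, but leave $\bar\varphi^{(k)}(0)\ge0$ possibly positive, so that $\bar\varphi\sim c\,x_2$ on $\partial\Omega$ near $0$, which is \emph{larger} than $Mx_2^{1+\alpha}$ for small $x_2$; the comparison with $v=Mx_2^{1+\alpha}$ then fails. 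The paper handles exactly this dichotomy (small vs.\ large $\bar\varphi^{(k)}(0)$) as Cases $I\!I_i$/$I\!I_{ii}$ in Lemma~\ref{lemma5.4}, and in the large case uses a different sub-barrier $w=\tfrac12 y_1^2y_2^{q-2/k}+Qy_2^q-My_2$ together with a contradiction argument and Lemma~\ref{lemma5.3}, not a one-line super-barrier.

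\textbf{The local-to-global step is the real obstruction.} Even granting a pointwise estimate $|u(x)-\ell_0(x)|\le C|x|^{1+\alpha}$ at the single degenerate point $0$, your covering argument does not yield global $C^{1,\alpha}$. The Caffarelli--Tang--Wang boundary estimate on a uniformly convex piece of $\partial\Omega$ carries constants depending on a lower bound for the curvature; for $z\in\partial\Omega$ with $|z|\to0$ the curvature $\kappa(z)\to0$ and those constants blow up. Thus for two points $x,y$ both on $\partial\Omega$ at distance $\sim r$ from $0$ with $|x-y|\ll r$, neither the CTW estimate nor the triangle inequality through $0$ (which only gives $|Du(x)-Du(y)|\le Cr^\alpha$, not $C|x-y|^\alpha$) suffices. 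The heart of the paper is precisely to obtain \emph{uniform} boundary estimates at every $z\in\partial\Omega$ near $0$, with constants independent of $\kappa(z)$: this is done via the affine change of coordinates $\mathcal T_k$ of Lemma~\ref{lem-boundary} (under which $\partial\tilde\Omega$ behaves like $y_2\approx\kappa(z)y_1^2+y_1^k$), a case split according to whether $|z_1|$ is large or small relative to the scale $h$, separate H\"older estimates for the tangential and normal derivatives (Lemmas~\ref{lemma5.2}--\ref{lemma5.4}), and finally a cone-construction/angle estimate (Lemmas~\ref{lem-ghg} and~\ref{angle-k}) to pass from boundary to global. None of this machinery is visible in your plan, and it cannot be bypassed by a single barrier at $0$.
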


The following modification of Example \ref{eg1.1} shows that the condition $(\mathcal P1)$ is indeed necessary. 
\begin{example} 
\emph{
Let $k=4$, $\rho(x_1)=x_1^4$, and $u(x)=-x_2^{1/2}$. Then, $u$ is a convex solution of the homogeneous equation $\det D^2u=0$. 
The boundary data $\varphi=u|_{\partial\Omega}=-x_1^2$ is smooth but does not satisfy the condition $(\mathcal P1)$, since $\varphi''(0)=-2\neq0$. The solution $u$ is not $C^{0,1}$ at $0$. 
}
\end{example}

Note that Theorem \ref{mt-1} accommodates the degenerate case $f\geq0$. This naturally leads us to consider
the homogeneous Monge-Amp\`ere equation:
	\begin{equation}\label{u-f=0}
	\begin{split}
		\det D^2u &=0 \quad \, \text{in} \ \ \Omega,\\  
		u & = \varphi \quad\text{on} \ \ \p\Omega.
	\end{split}
	\end{equation}
For this case, we extend Definition \ref{defdeg} to accommodate $\beta\in(0,k]$. 
Specifically, we say that $\p\Omega$ is $(k,\beta)$-order degenerate at $0$ if both \eqref{k-deg} and \eqref{k+b-deg} hold for $\beta\in(0,k]$, namely
	\begin{equation}\label{k+b-deg1}
		\rho(x_1) = \frac{1}{k!} \rho^{(k)}(0) x_1^k + \bar \rho(x_1) \quad\text{ with }\  \bar \rho(x_1) \leq O\big(|x_1|^{k+\beta}\big).
	\end{equation}
In particular, when $\beta>1$, we assume that the boundary function $\varphi\in C^{k+\beta}(\p\Omega)$ also satisfies
\begin{itemize}
  \item[$(\mathcal P2)$] $\frac{\p^i \varphi}{\p x_1^i}(0) =0$\quad for $i=k+1, k+2, \cdots, k+[\beta]$,
\end{itemize}
where $[\beta]$ denotes the largest integer strictly less than $\beta$. 
Note that $[\beta]=\beta-1$ if $\beta$ is an integer. 

\begin{theorem}\label{mt-2}
Let $u$ be a convex solution to \eqref{u-f=0}.
Suppose that $\p\Omega\in C^{k+\beta}$, $\beta\in(0,k]$, is $(k,\beta)$-order degenerate at $0$, and that $\varphi\in C^{k+\beta}(\p\Omega)$ satisfies conditions $(\mathcal P1)$ and $(\mathcal P2)$.
Then, $u$ is $C^{1,\beta/k}(\overline\Omega)$ and  
	\begin{equation}\label{dges}
		\|u\|_{C^{1,\beta/k}(\overline\Omega)} \le C,
	\end{equation}
where $C$ is a positive constant depending only on $k, \beta,\Omega$ and $\|\varphi\|_{C^{k+\beta}(\p\Omega)}$.
\end{theorem}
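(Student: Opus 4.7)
The plan is to upgrade the non-sharp bound $u \in C^{1,\alpha_0}(\overline\Om)$ --- obtained by applying Theorem \ref{mt-1} with $f\equiv 0$ (which trivially satisfies \eqref{dc} and \eqref{f0}) --- to the sharp exponent $\beta/k$ at the degenerate point $0 \in \p\Om$, exploiting the homogeneity $\det D^2 u = 0$. After subtracting the tangent plane $u(0) + Du(0)\cdot x$, I may assume $u(0) = Du(0) = 0$, and correspondingly $\varphi(0) = \varphi'(0)=0$ as a function of $x_1$ on $\p\Om$ near $0$. Away from $0$ the boundary is strictly convex and the $C^{1,\beta/k}$ bound follows from \cite{CTW-2022} together with standard interior theory, so it suffices to prove the pointwise estimate
\[
|u(x)| \le C\,|x|^{1+\beta/k} \qquad \text{for } x \in \Om \cap B_r(0).
\]

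Combining $(\mathcal P1)$, $(\mathcal P2)$ and $\varphi \in C^{k+\beta}(\p\Om)$ with the normalization above, the Taylor expansion of $\varphi$ at $0$ reduces to
\[
\varphi(x_1) = Q(x_1) + R(x_1), \qquad Q(x_1) = \sum_{\substack{3 \le i \le k-1 \\ i\ \text{odd}}} a_i\,x_1^i + a_k\,x_1^k, \qquad |R(x_1)| \le C\,|x_1|^{k+\beta},
\]
so $Q$ contains only the odd low-order coefficients and the top coefficient $a_k$. The basic building block of the barriers is
\[
U(x_2) := x_2^{1+\beta/k},
\]
which is convex, satisfies $\det D^2 U = 0$, and on $\p\Om$ near $0$ has size $U(\rho(x_1)) \asymp |x_1|^{k+\beta}$ --- exactly the order of both the remainder $R$ and the deviation $\bar\rho(x_1) = \rho(x_1) - \frac{\rho^{(k)}(0)}{k!}x_1^k$ from \eqref{k+b-deg1}.

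The heart of the argument is the construction of an explicit convex function $v^*$ on $\overline\Om$ with $\det D^2 v^* = 0$ whose boundary trace agrees with $Q$ modulo $O(|x_1|^{k+\beta})$ near $0$. In two dimensions, convex solutions of $\det D^2 v^* = 0$ are ruled --- affine along each member of a family of segments in $\overline\Om$ --- so I would construct $v^*$ by prescribing an appropriate foliation of a neighborhood of $0$ in $\Om$ by such segments, assigning to each the affine interpolation of $Q$ between its two endpoints on $\p\Om$, and then verifying convexity. With $v^*$ in hand, the comparison principle applied on $\Om \cap B_r(0)$ using the barriers $v^* \pm M U$ for $M$ large (the nonnegative $U$ absorbing both the remainder $R$ on $\p\Om$ and the discrepancy on the side boundary $\Om \cap \p B_r(0)$) would yield $|u - v^*| \le C\,U \le C\,|x|^{1+\beta/k}$; since $v^*$ itself is $C^{1,\beta/k}$ at $0$ --- being assembled from the polynomial $Q$ of degree $k$ through the ruled structure --- the desired pointwise bound on $u$ follows.

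The principal obstacle lies in constructing $v^*$, precisely because $Q$ carries odd powers $x_1^3, \ldots, x_1^{k-1}$. Naive extensions such as $x_1\,x_2^{(i-1)/k}$ reproduce these terms on the model boundary $x_2 = \frac{\rho^{(k)}(0)}{k!}x_1^k$ but satisfy $\det D^2 = -\bigl((i-1)/k\bigr)^2 x_2^{2(i-1)/k - 2} < 0$, so they fail to be convex; adding a large multiple of $U$ restores Hessian positivity but generically makes $\det D^2 > 0$, which destroys the super-solution property needed for the comparison. Circumventing this forces the use of the ruled construction above, and it is exactly the vanishing conditions $(\mathcal P1)$ that permit the polynomial boundary data $Q$ to be realized as the trace of a convex solution of $\det D^2 = 0$: if an even term $x_1^{2j}$ with $2j < k$ were allowed in $\varphi$, its ruled extension would force a jump in $Du$ at $0$, matching the obstruction illustrated by the example immediately after Theorem \ref{mt-1}.
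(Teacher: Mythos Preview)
Your proposal has two genuine gaps. First, the reduction to the single degenerate point $0$ is insufficient: the constants in \cite{CTW-2022} depend on a lower bound for the boundary curvature and blow up as one approaches $0$, so a pointwise estimate at $0$ plus non-uniform estimates nearby do not glue to a uniform $C^{1,\beta/k}$ bound --- and that uniformity at boundary points \emph{near} $0$ is precisely the content of the theorem. Second, after your normalization $u\ge 0$, $u(0)=0$, the first nonvanishing derivative of $\varphi$ at $0$ must be of even order; combined with $(\mathcal P1)$ this kills all of $a_3,\dots,a_{k-1}$ (this is Lemma~\ref{lem-5.1}), so in fact $Q(x_1)=a_k x_1^k$ and the ruled construction you identify as the ``principal obstacle'' is unnecessary. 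The only candidate is then the affine $v^*=c\,x_2$ with $c=a_k\,k!/\rho^{(k)}(0)$, but for the comparison $|u-v^*|\le M\,x_2^{1+\beta/k}$ to hold on the interior part of $\p(\Omega\cap B_r)$ you would need $u(x)=c\,x_2+O(x_2^{1+\beta/k})$ there, forcing $u_2(0)=c$. Since you normalized $u_2(0)=0$ this requires $a_k=0$, which is \emph{not} known a priori: $a_k$ depends on $u_2(0)$ through the subtraction, and $u_2(0)$ is determined by the global boundary data, not by the local Taylor expansion of $\varphi$. When $a_k>0$ the barriers fail on the side boundary for every fixed $M$, and the scheme is circular.

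The paper's route is structurally different and builds no global comparison function. Since for $\det D^2u=0$ the contact set through any interior point is a segment $\overline{pq}$ with $p,q\in\p\Omega$, it suffices to bound $[u]_{C^{1,\beta/k}\{p\}}$ uniformly over all such segments. For a short $\overline{pq}$ near $0$ one passes to coordinates in which $\overline{pq}$ is horizontal; then $\varphi'(p_1)=\varphi'(q_1)=0$ and $\varphi''(p_1),\varphi''(q_1)\ge 0$, which together with $(\mathcal P1')$, $(\mathcal P2)$ and repeated Rolle-type arguments yield $|\varphi^{(i)}(p_1)|\le C\bar a^{\,k-i+\beta}$ with $\bar a=|p_1|+|z_1|$. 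These derivative bounds, inserted into the Taylor expansion of $\varphi$ at $p_1$ and converted to the normal direction via an iterated family of tangent segments to $\p\Omega$, give $u(p+se_2)\le C|s|^{1+\beta/k}$ with $C$ independent of the location of $p,q$ (Lemma~\ref{lem-6.1}); the uniformity comes from the algebra of these bounds rather than from a barrier.
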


We adopt the notation $C^{k+\beta}=C^{k,\beta}$ when $\beta\in(0,1)$, as in \cite{GT}.
When $\beta\geq1$, we write $C^{k+\beta}=C^{k+[\beta], \beta-[\beta]}$ with $\beta-[\beta]\in(0,1]$. 
For example, $C^{k+\beta}=C^{k,1}$ if $\beta=1$; and $C^{k+\beta}=C^{k+1,1}$ if $\beta=2$. 
In particular, when $\beta=k$, Theorem \ref{mt-2} yields a global $C^{1,1}$ regularity for \eqref{u-f=0}.
The global $C^{1,1}$ regularity for \eqref{u-f=0} was previously obtained in \cite{CNS-1986, GTW-1999,CTW-2022} under the assumption that $\p\Omega$ is uniformly convex. 
To our best knowledge, Theorem \ref{mt-2} is the first result on the global $C^{1,1}$ regularity for non-uniformly convex domains. 

The following example shows that if condition $(\mathcal P2)$ is not satisfied, the solution may fail to be in class $C^{1,\beta/k}(\overline\Omega)$.
\begin{example}
\emph{
Let $k=4$ and $\beta=2+\varepsilon$ for some $\vep>0$. 
Let $\rho(x_1)=x_1^4$, and $u=x_2+x_2^{3/2}$.
Then, $\det D^2u=0$, and $\varphi=x_1^4+x_1^6$ is smooth but does not satisfy $(\mathcal P2)$ since $\varphi^{(6)}(0)\neq0$.
We observe that $u$ is not $C^{1,\beta/k}$. 
}
\end{example}

A related problem is the regularity of the convex envelope, which appears in various analytical and geometrical contexts. 
Given a continuous function $w:\overline\Omega\to\mathbb{R}$, its convex envelope is defined by
	\begin{equation}\label{u-ce}
		u(x) = \sup \{ \ell(x) ~|~ \ell \le w \text{~in~} \overline\Omega \text{~and~} \ell \text{~is~affine}\}.
	\end{equation}
Global $C^{1,1}$ regularity for this setting was recently proved in \cite{DF-2015,CTW-2022} under the condition that $\partial\Omega$ is uniformly convex. 
For further background, see \cite{DF-2015,CTW-2022} and references therein. 
In this paper, we extend the global $C^{1,\alpha}$ estimate in Theorem \ref{mt-2} to solutions of \eqref{u-ce}, given that $w$ satisfies a corresponding condition at the degenerate point: 
\begin{itemize}
  \item[$(\mathcal P3)$] $D^{(i)}w(0) =0$\quad for $i=2, \cdots, k+[\beta]$.
\end{itemize}
Specifically, when $\beta=k$, we obtain the global $C^{1,1}$ regularity for non-uniformly convex domains.   
\begin{theorem}\label{mt-3}
Assume that $\p\Omega\in C^{k+\beta}$, $\beta\in(0,k]$, is $(k,\beta)$-order degenerate at point $0$.
Let $w\in C^{k+\beta}(\overline\Omega)$ satisfy $(\mathcal P3)$.
Then, the convex envelope $u$ defined by \eqref{u-ce} is in $C^{1,\beta/k}(\overline\Omega)$ and estimate \eqref{dges} holds. 
\end{theorem}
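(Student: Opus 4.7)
The plan is to parallel the proof of Theorem~\ref{mt-2}, with the prescribed Dirichlet datum replaced by barriers derived from $w$ itself, and using additionally the ruled structure of the convex envelope off its contact set. After subtracting the affine tangent of $w$ at $0$ (which does not affect the $C^{1,\beta/k}$ modulus of continuity of $u$), we may assume $w(0)=0$ and $Dw(0)=0$; condition $(\mathcal P3)$ then yields the Taylor bound $|w(x)|\le C|x|^{k+\beta}$ on $\overline\Omega\cap B_r(0)$ for some $r>0$. Since $k\ge 4$ and $\beta\in(0,k]$, we have $k+\beta\ge 1+\beta/k$, so $|w(x)|\le C|x|^{1+\beta/k}$ in the same neighbourhood. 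We rely on the classical facts (see \cite{CTW-2022,DF-2015}) that the convex envelope $u$ satisfies $u\le w$ in $\overline\Omega$, is a viscosity solution of $\det D^2 u=0$ on the non-contact set $\{u<w\}$, and coincides with $w$ (and with $Dw$) on the contact set.

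The crux is the pointwise estimate $|u(x)-\ell_0(x)|\le C|x|^{1+\beta/k}$ at the degenerate boundary point $0$, where $\ell_0$ is the tangent hyperplane of $u$ at $0$. The analysis proceeds in two stages. First, one identifies $\ell_0$: testing any supporting hyperplane $\ell(x)=a+b_1 x_1+b_2 x_2$ of $u$ at $0$ against the boundary parametrization $(x_1,\rho(x_1))$, and using $\rho(x_1)\ge 0$, $\rho(x_1)=O(|x_1|^k)$, together with the Taylor bound on $w$, the $k$-order degeneracy of $\rho$ forces $b_1=0$; a complementary interior barrier argument, analogous to the one used for Theorem~\ref{mt-2}, pins down $a$ and $b_2$ so that the subdifferential $\partial u(0)$ reduces to a single hyperplane $\ell_0$, hence $u$ is differentiable at $0$. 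Second, one upgrades to Hölder control: the interplay between the $(k,\beta)$-degeneracy of $\rho$, the identification of $\ell_0$, the Taylor bound on $w$, and the ruled structure of $u$ off its contact set yields the matching upper bound $u(x)-\ell_0(x)\le C|x|^{1+\beta/k}$, while the lower bound $u(x)\ge \ell_0(x)$ is automatic by convexity.

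To conclude, the boundary Hölder estimate at the degenerate point $0$ is patched with Caffarelli's interior $C^{1,\alpha}$ estimate \cite{C-1991}, with the global $C^{1,1}$ theory of \cite{DF-2015,CTW-2022} at uniformly convex portions of $\partial\Omega$, and with Theorem~\ref{mt-2}'s boundary estimate at other degenerate points of $\partial\Omega$, via a standard covering argument. This gives $u\in C^{1,\beta/k}(\overline\Omega)$ together with the quantitative bound \eqref{dges}.

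The main obstacle is the identification of $\ell_0$ in the first stage. Unlike in Theorem~\ref{mt-2}, where $\ell_0$ is determined directly by the Dirichlet datum $\varphi$, here $\ell_0$ arises implicitly as the supremum over affine minorants of $w$, and its $x_2$-component $b_2$ is not a priori zero (nor is $a=\ell_0(0)$ a priori equal to $w(0)=0$, since $0$ need not lie in the contact set). Pinning down $(a,b_2)$ and controlling the resulting error $w-\ell_0$ to the correct order $1+\beta/k$ requires a delicate blow-up or barrier argument, exploiting simultaneously the $(k,\beta)$-degeneracy of $\rho$, the order-$(k+\beta)$ flatness of $w$ at $0$, and the geometry of the contact set of $u$ near the degenerate boundary point.
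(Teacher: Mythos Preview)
Your proposal has a structural gap. Establishing the pointwise estimate $|u(x)-\ell_0(x)|\le C|x|^{1+\beta/k}$ at the single degenerate point $0$ does not suffice, and the proposed ``standard covering argument'' cannot close the gap: at boundary points $z\in\partial\Omega$ near $0$ but distinct from $0$, the curvature $\kappa(z)\approx |z_1|^{k-2}$ is positive but arbitrarily small, so the uniformly-convex theory of \cite{DF-2015,CTW-2022} yields constants that blow up as $z\to 0$, while your estimate at $0$ alone does not control $[u]_{C^{1,\beta/k}\{z\}}$ for such $z$. A genuine $C^{1,\beta/k}(\overline\Omega)$ bound requires $[u]_{C^{1,\beta/k}\{x_0\}}\le C_0$ uniformly over \emph{all} $x_0\in\overline\Omega$.

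The paper's argument is organised quite differently and never singles out the point $0$. For each $x_0$ in the non-contact set $G=\{u<w\}$, the contact set $\mathcal C_{x_0}$ has extreme points $p_i\in\overline\Omega\setminus G$, and by convexity it suffices to bound $[u]_{C^{1,\beta/k}\{p_i\}}$. If $p_i\in\Omega$ is an interior contact point, the bound is immediate from $u\le w$, $u(p_i)=w(p_i)$. If both endpoints of a ruling segment $\overline{pq}$ lie on $\partial\Omega$, Lemma~\ref{lem-6.1} applies verbatim. The genuinely new case is $p\in\partial\Omega$ near $0$ with $q\in\Omega$ an interior contact point: here the Rolle-type argument of Lemma~\ref{lem-6.1} (which used $\varphi'(p_1)=\varphi'(q_1)=0$ for $p,q\in\partial\Omega$) fails, and the paper repairs it by subtracting the support plane $\ell_q$ of $w$ at $q$, introducing an auxiliary curve $y_2=\eta(y_1)$ joining $p$ to $q$ and tangent to $\partial\Omega$ at $p$, and applying Rolle to $\check\varphi(y_1)=\bar w(y_1,\eta(y_1))$ to recover the key bounds $|\bar\varphi''(p_1)|\le C\bar a^{k-4+\beta}p_1^2$ and $|\bar\varphi'''(p_1)|\le C\bar a^{k-4+\beta}|p_1|$. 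Condition $(\mathcal P3)$ enters here to control $|D^{(i)}w|$ along $\eta$. This is where the real work lies, not in identifying $\ell_0$ at $0$.

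A minor correction: since $\Omega$ is strictly convex, one has $u=w$ on $\partial\Omega$, so in your notation $a=u(0)=w(0)=0$; only the normal coefficient $b_2$ is undetermined a priori, and in fact the paper never needs to identify it directly.
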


\begin{remark}\label{rmkB1.1}
\emph{
In this paper, we consider the non-uniformly convex domain $\Omega$, namely the degenerate order $k\geq4$. 
Handling the degeneracy of $\p\Omega$ is significantly more complex than the uniform convexity. For uniformly convex domains, each boundary point can be treated equally; however, when a domain is not uniformly convex -- for instance, $\Omega=\{x_1^4+x_2^2<1\}$ -- it is necessary to distinguish boundary points $x\in\p\Omega$ in two cases: $(i)$ if $x$ is near $(\pm1,0)$, $\p\Omega$ is locally quartic; $(ii)$ if $x$ is away from $(\pm1,0)$, $\p\Omega$ is locally quadratic. } 
\end{remark}

\begin{remark}
\emph{
Although we assume in this paper that the dimension $n=2$, our method and techniques may also be applicable in higher dimensions, particularly when $\p\Omega$ is isotropic at the degenerate point, for instance, $x_n=|x'|^k$, where $x'=(x_1,\cdots,x_{n-1})$. 
However, if $\p\Omega$ is anisotropic -- that is, the degenerate order varies along different directions -- the assumptions on $f$ and $\varphi$ become considerably more complex. 
We leave this case for a seperate work. 
}
\end{remark}

This paper is organised as follows. Section \ref{S2} covers preliminary results used throughout. 
Sections \ref{S3} and \ref{S4} are dedicated to proving Theorem \ref{mt-1}.  Given the complexity of the non-uniform convexity, we first present a model case of $k=4$ in Section \ref{S3} to illustrate the main steps and ideas, before addressing the general case in Section \ref{S4}. 
In Section \ref{S5} we study the homogeneous Monge-Amp\`ere equation \eqref{u-f=0} and prove Theorem \ref{mt-2}. Finally, in Section \ref{S6}, we establish the regularity for the convex envelope \eqref{u-ce} and prove Theorem \ref{mt-3}. 
An appendix includes some technical computations.

\vskip5pt 
\section{Preliminaries}\label{S2}

In this section, we summarise foundational results used throughout the paper.
Caffarelli's interior $C^{1,\alpha}$ estimate relies on the strict convexity of $u$ and the doubling condition for $f$. 
Specifically, if $f$ satisfies \eqref{dc} and $u^*$ is a convex solution of
	\begin{align*}
		\det D^2u^* &= f \quad\mbox{ in }\Omega^*, \\
		u^* &= 0 \quad\mbox{ on }\p\Omega^*,
	\end{align*}
where $B_1\subset\Omega^*\subset B_n$ and the centre of mass at zero, then
	\begin{equation}\label{daes}
	\begin{split}
		|u^*(x)| &\leq Cd^{1/n}(x,\p\Omega^*)|\inf_{\Omega^*}u^*| \\
			& \approx Cd^{1/n}(x,\p\Omega^*)\mu_{*}^{1/n}(\Omega^*),
	\end{split}
	\end{equation}
where $C>0$ depends on the doubling constant $C_b$, and $\mu_*$ is the Monge-Amp\`ere measure of $u^*$. 
For a detailed proof of \eqref{daes}, we refer the reader to \cite{C-1991,F-2017,LW14}.

Let $u$ be the convex solution of \eqref{u-fp} and $x_0\in\Omega$. An affine function $\ell_0$ is a support of $u$ at $x_0$ if $u(x_0)=\ell_0(x_0)$ and $u\geq\ell_0$ in $\Omega$. 
Define the contact set of $u$ at $x_0$ as
	\begin{equation}\label{ctset}
		\mathcal{C}_0=\{x\in\overline\Omega : u(x)=\ell_0(x)\}.
	\end{equation}  
By the estimate \eqref{daes} and an analogous argument as in \cite{C-1990}, we can show that either $\mathcal{C}_0$ is a singleton, or it cannot have extreme points in the interior of $\Omega$. Moreover, $u$ must be differentiable at $x_0$.

If $u$ is strictly convex at $x_0$ (i.e., $\mathcal{C}_0$ is a singleton), we may, by translating coordinates and subtracting an affine function, assume $x_0=0$ and $\ell_0=0$ is the support of $u$ at $0$. 
Define the sub-level set $S_h$ of $u$ at $0$ by
	\begin{equation*}
		S_h = \left\{x\in\Omega : u(x)< h \right\}.
	\end{equation*}
Let $h_0=\sup\{h>0 : S_h\Subset\Omega\}$. 
Applying \eqref{daes} to a normalised profile of $S_{h}$ with $h\leq h_0$,
we can derive that there is a constant $\sigma\in(0,1/2)$ depending on $n$ and $C_b$ such that the following estimates hold:
\begin{itemize}
\item[$(i)$] Balance estimate (\cite{C-1990}): Let $L=\overline{zz^*}\subset\overline{S_h}$ be a line segment passing through the origin with $z, z^*\in\p S_h$. Then $|z|\approx |z^*|$, namely
	\begin{equation}\label{sigma}
		\sigma |z^*| \leq |z| \leq \sigma^{-1}|z^*|. 
	\end{equation}

\item[$(ii)$] Decay estimate (\cite{C-1991}): For any $z\in\p S_h$,
	\begin{equation*}
		u\Big(\frac12z \Big) \leq \Big(\frac12-\sigma \Big) u(z).
	\end{equation*}
\end{itemize}
The decay estimate yields the $C^{1,\alpha}$ regularity of $u$ at $0$. 
Specifically, by choosing $\alpha>0$ such that $2^{-(1+\alpha)}=\frac12-\sigma$, we can, by induction, obtain
	\begin{equation}\label{inde}
		u(x) \leq \frac{2u(z)}{|z|^{1+\alpha}}|x|^{1+\alpha} \quad \text{ for any }\,x\in\overline{oz}.
	\end{equation}

For the Dirichlet problem \eqref{u-fp}, the solution $u$ is strictly convex if $f\geq c >0$ when $n=2$, and additionally $\varphi\in C^{1,\alpha}$ for $\alpha>1-\frac2n$ when $n\geq3$. 
However, in our case $f$ does not have a positive lower bound. 
Hence, $\mathcal{C}_0$ may contain a line segment with extreme points on $\p\Omega$, or $h_0$ could be very small (i.e., no positive lower bound on $\Omega'\Subset\Omega$). 

By a new observation using the boundary condition, Caffarelli, Tang and Wang obtained the following interior $C^{1,\alpha}$ regularity of $u$:
\begin{theorem}[\cite{CTW-2022}]\label{lem-3.2}
Let $u$ be the convex solution of \eqref{u-fp} with $f$ satisfying \eqref{dc}.
Assume $\Omega$ is bounded and convex, and $\p\Omega$ and $\varphi$ are $C^{1,\beta}$ for some $\beta>0$.
Then, for any $x_0\in\Omega'\Subset\Omega$, 
	\begin{equation*}
		0\leq u(x) - u(x_0) - Du(x_0) \leq C|x-x_0|^{1+\alpha} \quad\mbox{ in }\Omega,
	\end{equation*}
where $\alpha$ depends on $n, \beta, C_b$, and $C$ depends additionally on $\dist(\Omega',\p\Omega)$ and $\|\varphi\|_{C^{1,\beta}(\p\Omega)}$.
\end{theorem}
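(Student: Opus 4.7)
The strategy is to adapt Caffarelli's classical interior $C^{1,\alpha}$ argument \cite{C-1991} to the degenerate regime $f\geq0$. When $f$ has a positive lower bound, strict convexity of $u$ at interior points is automatic (in dimension two, or with a $C^{1,\alpha}$ boundary in higher dimensions), and the rest of the proof is a direct application of the affine-invariant decay machinery recalled just before the statement. In the present setting, however, $f$ may vanish, so the crux is to extract strict convexity at $x_0\in\Omega'\Subset\Omega$ from the $C^{1,\beta}$ regularity of $\p\Omega$ and $\varphi$ alone.

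The first step is to rule out that the contact set $\mathcal{C}_0$ in \eqref{ctset} has dimension $\geq 1$ at $x_0$. By the Alexandrov estimate \eqref{daes} and the argument recalled after \eqref{ctset}, every extreme point of $\mathcal{C}_0$ must lie on $\p\Omega$; hence if $\mathcal{C}_0\neq\{x_0\}$ then $\mathcal{C}_0$ would contain a segment $L$ with both endpoints $y_1,y_2\in\p\Omega$. Along $L$ the supporting affine function $\ell_0$ agrees with $u$, and at $y_1,y_2$ one has $\ell_0=\varphi$, while $\ell_0\leq\varphi$ globally on $\p\Omega$. I would exploit the $C^{1,\beta}$ regularity of both $\p\Omega$ and $\varphi$ to derive a quantitative separation $\varphi(y)-\ell_0(y)\geq c_0\,\dist(y,\{y_1,y_2\})^{1+\beta}$ on $\p\Omega$ near $y_1$ and $y_2$, and then construct a convex barrier $v\leq u$ that strictly exceeds $\ell_0$ in a neighbourhood of the midpoint of $L$. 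This contradicts $\ell_0=u$ along $L$, forcing $\mathcal{C}_0=\{x_0\}$. This separation-and-barrier construction is the main obstacle: every other step uses standard Monge-Amp\`ere technology, but here the boundary regularity must be quantitatively propagated into the interior against a possibly vanishing right-hand side.

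Once strict convexity is in hand, I would normalise by translating $x_0$ to the origin and subtracting $\ell_0$ so that $u(0)=0$ and $\ell_0\equiv 0$. The sub-level sets $S_h=\{u<h\}$ then satisfy $S_h\Subset\Omega$ for all $h\leq h_0$, and the separation estimate of Step~1, made quantitative, yields a uniform lower bound for $h_0$ in terms of $\dist(\Omega',\p\Omega)$, $\|\varphi\|_{C^{1,\beta}(\p\Omega)}$, the doubling constant $C_b$, and $\Omega$. For such $h$, John's lemma produces an affine map $T_h$ normalising $S_h$ between two balls of comparable radii, and applying \eqref{daes} to $u\circ T_h^{-1}$ on $T_h(S_h)$ furnishes both the balance estimate \eqref{sigma} and the decay estimate $u(z/2)\leq (\tfrac12-\sigma)u(z)$ on $\p S_h$, with $\sigma\in(0,1/2)$ depending only on $n$ and $C_b$.

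Finally, iterating the decay estimate along rays emanating from the origin and choosing $\alpha>0$ so that $2^{-(1+\alpha)}=\tfrac12-\sigma$ produces \eqref{inde}, from which the claimed H\"older gradient estimate $0\leq u(x)-u(x_0)-Du(x_0)\cdot(x-x_0)\leq C|x-x_0|^{1+\alpha}$ follows by taking the supremum over directions. The exponent $\alpha$ inherits its dependence on $n,\beta,C_b$, while the constant $C$ picks up the additional dependence on $\dist(\Omega',\p\Omega)$ and $\|\varphi\|_{C^{1,\beta}(\p\Omega)}$ through the lower bound on $h_0$.
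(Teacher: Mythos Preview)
Your Step~1 is where the plan breaks down. The conclusion you are aiming for --- that $\mathcal C_0=\{x_0\}$, i.e.\ $u$ is strictly convex at every interior point --- is simply \emph{false} under the hypotheses of the theorem. Take $f\equiv 0$: then $u$ is the convex envelope of $\varphi$, and at a generic interior point the contact set is a nontrivial segment with endpoints on $\p\Omega$. No barrier argument can rule this out. Concretely, your proposed barrier $v\le u$ would have to satisfy $v\le \ell_0$ on the segment $L$ (since $u=\ell_0$ there), so it cannot ``strictly exceed $\ell_0$'' at the midpoint. Moreover, the quantitative separation you invoke, $\varphi(y)-\ell_0(y)\ge c_0\,\dist(y,\{y_1,y_2\})^{1+\beta}$, is a \emph{lower} bound that $C^{1,\beta}$ regularity does not provide; $C^{1,\beta}$ only gives the \emph{upper} bound $\varphi-\ell_0\le C\,|y-y_i|^{1+\beta}$ near each $y_i$.

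The approach in \cite{CTW-2022} (glimpsed in the paper in the proof of Lemma~\ref{lem-ghg}) does not attempt to force strict convexity. It treats both cases. If $\mathcal C_0$ contains a segment with extreme points $p_i\in\p\Omega$, one uses the $C^{1,\beta}$ upper bound $\varphi-\ell_0\le C|y-p_i|^{1+\beta}$ on $\p\Omega$ together with a convex cone construction (vertex at the far endpoint, base on $\p\Omega$ near $p_i$) to dominate $u-\ell_0$ from above by a function with the required $|x-x_0|^{1+\alpha}$ growth; convexity of $u$ then transfers this bound to $x_0$. If $u$ is strictly convex at $x_0$, the decay estimate \eqref{inde} applies inside $S_{h_0}$ as you describe --- but there is no uniform lower bound on $h_0$, so when $h_0$ is small one must again invoke the cone construction at the point where $\p S_{h_0}$ touches $\p\Omega$. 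Your Steps~2--3 correctly capture the strictly convex half of the argument, but the missing half is precisely what makes the theorem work when $f$ is allowed to vanish.
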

The boundary estimate is more subtle. 
By further assuming that $\p\Omega, \varphi \in C^{2,\beta}$ and $\p\Omega$ is uniformly convex, the boundary $C^{1,\alpha}$ regularity of $u$ was also obtained in \cite{CTW-2022}. 
In this paper, we aim to relax the uniform convexity condition on $\p\Omega$.
Given the complexity of the general case, we first demonstrate the main ideas and techniques using a model case in the next section.

\vskip5pt
\section{A model case of $k=4$}\label{S3} 

In this section, we assume that $\Omega\subset \R^2$ is a bounded, convex domain within the strip $\{0<x_2<1\}$, with $0\in\p\Omega$. Near the origin, the boundary $\p\Omega$ is given locally by
	\begin{equation}\label{lb-0}
		x_2 = \rho(x_1) = x_1^{4}.
	\end{equation}
First, we derive a global gradient estimate and introduce a local coordinate transformation, which will also be used later in the boundary $C^{1,\alpha}$ estimates. 

\begin{lemma}\label{thm-gra-est}
Let $u$ be a convex solution to \eqref{u-fp} with $f$ satisfying \eqref{f0}.
Assume that $\p\Omega$ satisfies \eqref{lb-0} and $\varphi\in C^{3,1}$, with $\varphi''(0)=0$.
Then, $u\in C^{0,1}(\overline\Omega)$ and 
	\begin{equation}\label{C1bound}
		|Du| \le C
	\end{equation}
for some positive constant $C$ depending only on $f_0, \Omega$ and $\|\varphi\|_{C^{3,1}(\p\Omega)}$.
\end{lemma}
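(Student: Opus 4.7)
The plan is to reduce the global Lipschitz bound to a boundary gradient estimate, and then establish the latter by explicit barrier comparison adapted to the degenerate geometry at $0$. Since $u$ is convex, for every direction $e\in\mathbb{S}^1$ the function $\p_e u$ is monotone along $e$, so the supremum of $|Du|$ on $\Omega$ is controlled by the one-sided boundary slopes of $u$. The tangential component of $Du$ on $\p\Omega$ is immediately bounded by $\|\varphi\|_{C^{0,1}(\p\Omega)}\le \|\varphi\|_{C^{3,1}(\p\Omega)}$, so the issue is the inward normal derivative at each $x_0\in\p\Omega$. At any $x_0\in\p\Omega\setminus\{0\}$ the boundary is strictly convex with $\rho''(x_1)=12x_1^2>0$, and classical exterior-ball and interior barriers give the normal-derivative bound with constants controlled by the problem data; only the degenerate point $0$ requires genuinely new constructions, and the barriers built there must cover an entire fixed neighborhood so that they supersede the deteriorating exterior-ball constants along $\p\Omega$ as $x_0\to 0$.

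For the upper bound at $0$, I plan to use the convex supersolution
	\begin{equation*}
		\bar w(x) = \varphi(0) + \varphi'(0)\,x_1 + A\,x_1^2 + M\,x_2,
	\end{equation*}
which is convex (one zero eigenvalue) and satisfies $\det D^2\bar w = 0 \le f$. Because $\varphi''(0)=0$, Taylor expansion of $\varphi$ on $\p\Omega$ gives $\varphi(x_1)-\varphi(0)-\varphi'(0)x_1 = O(|x_1|^3)$; evaluating $\bar w$ on the curve $x_2=x_1^4$ shows that the quadratic term $A x_1^2$ absorbs this cubic remainder on $\{|x_1|\le\delta\}$ once $A$ is chosen large compared to $\|\varphi\|_{C^{3,1}}\delta$, and a further large choice of $M$ ensures $\bar w\ge u$ on $\Omega\cap\p B_\delta$ using only $\|u\|_{L^\infty}\le\|\varphi\|_{L^\infty}$. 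The Monge-Amp\`ere comparison principle then yields $u\le\bar w$ in $\Omega\cap B_\delta$, and hence $u(x)-\varphi(0)\le (|\varphi'(0)|+2A\delta+M)|x|$ in this neighborhood. A companion subsolution
	\begin{equation*}
		\underline w(x) = \varphi(0) + \varphi'(0)\,x_1 + V(x_1) - N\,x_2,
	\end{equation*}
with $V$ a convex function of $x_1$ bounded above by $\varphi(x_1)-\varphi(0)-\varphi'(0)x_1$ on $[-\delta,\delta]$ and $V(0)=0$, would supply the matching lower bound $u(x)-\varphi(0)\ge -C|x|$ once $N$ is chosen large.

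The hardest step, and where I expect the construction to demand the most care, is the production of such a convex $V$ when $\varphi'''(0)\ne 0$. In that case the odd cubic term $\varphi'''(0)x_1^3/6$ changes sign across $x_1=0$, and no smooth convex $V$ with $V(0)=0$ can satisfy $V\le\varphi-\varphi(0)-\varphi'(0)x_1$ pointwise on $[-\delta,\delta]$; the envelope construction inevitably produces a $V$ with a corner at $x_1=0$, and it must moreover be verified that the resulting $\underline w$ remains an admissible Monge-Amp\`ere subsolution up to the flat part of the boundary. To handle this I would implement the local coordinate transformation anticipated in the lemma's introduction, namely $y_1=x_1$, $y_2=x_2-\rho(x_1)$, which straightens $\p\Omega$ into $\{y_2=0\}$, transforms $\det D^2 u=f$ into a perturbed Monge-Amp\`ere equation on $\{y_2>0\}$ carrying an explicit lower-order term, and converts the pointwise boundary inequalities into transparent statements on the flat boundary.

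Once $\underline w\le u\le\bar w$ is established in $\Omega\cap B_\delta$ with the Lipschitz constants of $\bar w$ and $\underline w$ depending only on $f_0$, $\Omega$, and $\|\varphi\|_{C^{3,1}(\p\Omega)}$, combining this pointwise Lipschitz bound at $0$ with the standard boundary estimates at all other points of $\p\Omega$, and invoking the convexity-based monotonicity of $\p_e u$, yields $|Du|\le C$ on $\overline\Omega$ and hence $u\in C^{0,1}(\overline\Omega)$, as claimed.
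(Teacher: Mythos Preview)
Your candidate subsolution $\underline w(x)=\varphi(0)+\varphi'(0)x_1+V(x_1)-Nx_2$ does not work: it is affine in $x_2$, so $\det D^2\underline w=0$ regardless of $V$. The hypothesis \eqref{f0} only bounds $f$ from above; $f$ may well equal $f_0>0$ throughout, and then $\det D^2\underline w=0<f=\det D^2 u$, so the comparison principle gives no inequality $\underline w\le u$. This is precisely the direction that matters: for convex $u$ with bounded oscillation the upper bound on the inward normal derivative follows from convexity along the inward chord, and only the lower bound requires a genuine sub-barrier. The paper's remedy is a barrier with honest $x_1$--$x_2$ coupling,
\[
w(y)=\tfrac12\, y_1^2\,y_2^{\,q-1/2}+\tfrac{q}{q-1}\,y_2^{\,q}-M\,y_2,\qquad q\in\bigl(1,\tfrac54\bigr),
\]
whose mixed structure yields $\det D^2w\ge \tfrac14\, y_2^{2q-5/2}\ge f_0$ on a thin strip $\{0<y_2<h\}$, while on $\p\tilde\Omega$ one has $y_1^2\le 2y_2^{1/2}$ so that the first two terms are dominated by $Cy_2^q$ and hence by $My_2$.

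There is a second, structural gap. Your barriers are centred at $0$ and touch $u$ only there; even if $\underline w$ were a valid subsolution, the sandwich $\underline w\le u\le\bar w$ would bound $|Du(0)|$ but say nothing about $|Du(z)|$ for nearby $z\in\p\Omega$, since $\underline w(z)\ne\varphi(z)=u(z)$ destroys the one-sided derivative comparison at $z$. The paper instead constructs the barrier at \emph{each} $z$ via the affine shear $y_1=x_1-z_1$, $y_2=x_2-l_z(x_1)$ with $l_z$ the tangent line (not the nonlinear flattening $y_2=x_2-\rho(x_1)$ you propose, which would not preserve $\det D^2$). In these coordinates $\p\tilde\Omega$ is given by $y_2=6z_1^2y_1^2+4z_1y_1^3+y_1^4\approx z_1^2y_1^2+y_1^4$, and the hypothesis $\varphi''(0)=0$ forces $\tilde\varphi''(-z_1)=\tilde\varphi'''(-z_1)=0$; a Taylor expansion then gives $\tilde\varphi(y_1)-\tilde\varphi(0)-\tilde\varphi'(0)y_1\le C_1\tilde\rho(y_1)$ with $C_1=C_1(\|\varphi\|_{C^{3,1}})$ independent of $z$. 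This is what lets the single barrier recipe work uniformly in $z$, and it also dissolves your worry about the sign of $\varphi'''(0)$.
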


We note that $\varphi\in C^{3,1}$ is necessary. For example, if $u(x_1,x_2)= -x_2^{1-\vep/4}$, then $\varphi=u|_{\p\Omega}\in C^{3,1-\vep}(\p\Omega)$ for any small $\vep>0$, and $\varphi''(0)=0$, but $u$ is not $C^{0,1}(\overline\Omega)$.

\begin{proof}
By the convexity of $u$ and the comparison principle, it is easy to see that $|u|_0\leq C$ in $\overline\Omega$, where $C$ depends on $\varphi, f_0$ and $\Omega$.  
By the Dirichlet boundary condition, in order to derive the gradient estimate of $u$, it suffices to show that
	\begin{equation}\label{ones}
		u_{\nu}(z)\leq C
	\end{equation} 
for $z\in\p\Omega$ near the origin, where $\nu$ 
is the unit outer normal of $\p\Omega$ at $z$, and $C$ is independent of the curvature of $\p\Omega$ at $z$.
By the comparison principle \cite{GT}, \eqref{ones} can be obtained if there exists a suitable sub-barrier function. 
When $\Omega$ is uniformly convex, a sub-barrier can be easily constructed using the defining function of $\Omega$. 
However, in the non-uniformly convex case \eqref{lb-0}, we need to set up new coordinates as follows. 

At $z\in\p\Omega$, the tangent line of $\p\Omega$ is $x_2=l_{z}(x_1) = 4z_{1}^3 (x_1 - z_{1}) + z_{1}^4$.
Introduce a coordinate transform 
	\begin{equation}\label{y=tx}
		y = \mathcal T(x) = \mathcal A x + \mathcal B,
	\end{equation}
where $\mathcal A =
\begin{pmatrix}
1 & 0 \\
- 4z_{1}^3 & 1
\end{pmatrix}$ and $\mathcal B = \begin{pmatrix}
- z_{1} \\
3 z_{1}^4
\end{pmatrix}$,
so that 
	\begin{equation}\label{y=tx-1}
			y_1 =  x_1 - z_1, \qquad
			y_2 = x_2 - l_z(x_1). 
	\end{equation}
Let $\tilde\Omega := \mathcal T (\Omega) \subset \{y_2 >0\}$.
This transform $\mathcal T$ maps $z$ to $0_y$ (the origin in $(y_1,y_2)$-coordinates), and $\p \tilde\Omega$ locally satisfies
	\begin{equation}\label{y2-r0}
		y_2  =\tilde \rho(y_1) = 6 z_{1}^2 y_1^2 + 4 z_{1} y_1^3 + y_1^4,
	\end{equation}
which implies
	\begin{equation}\label{y2-r3}
	\begin{split}
		\frac14 ( z_{1}^2 y_1^2 + y_1^4) \le y_2 \le 12 ( z_{1}^2 y_1^2 + y_1^4).
	\end{split}
	\end{equation}

Under the transform $\mathcal T$, the boundary data becomes $\tilde\varphi(y) = \varphi(\mathcal T^{-1} y)$ for $y\in\p\tilde\Omega$.  
We extend $\tilde\varphi$ from $\p\tilde\Omega$ to $\tilde D_h := \{y\in \tilde\Omega ~|~ \tilde \rho(y_1) < y_2 < h\}$ by setting
	\begin{equation}\label{extphi}
		\tilde\varphi(y_1,y_2) = \tilde\varphi(y_1, \tilde\rho(y_1))\quad\text{ for } \ (y_1,y_2)\in \tilde D_h ,
	\end{equation}
where $h>0$ is small.  
Using condition ($\mathcal P1$) and the convexity, we can verify that  
	\begin{equation}\label{var2-2}
		\tilde \varphi''(-z_1)= \tilde \varphi'''(-z_1) =0, 
	\end{equation}
and then, applying Taylor's expansion and the $C^{4,\beta}$ regularity of $\tilde\varphi$, we find
	\begin{equation}\label{var-2-3}
	\begin{split}
		\tilde\varphi^{(4)}(0)  &= \tilde\varphi^{(4)}(-z_1) + O(|z_1|^{\beta}), \\
		\tilde\varphi'''(0)  &= \tilde\varphi'''(-z_1) + z_1 \tilde\varphi^{(4)} (-z_1) + O(|z_1|^{1+\beta}) \\
					&=z_1 \tilde\varphi^{(4)}(0) + O(|z_1|^{1+\beta}), \\
		 \tilde\varphi''(0) & = \tilde\varphi''(-z_1) + z_1 \tilde\varphi'''(-z_1) + \frac{z_1^2}{2} \tilde\varphi^{(4)}(-z_1) + O(|z_1|^{2+\beta}) \\
 					& = \frac{z_1^2}{2} \tilde\varphi^{(4)}(0) + O(|z_1|^{2+\beta}).
	\end{split}
	\end{equation}
Therefore, by \eqref{y2-r0} we have  
	\begin{equation}\label{taylor-1}
	\begin{split}
		\tilde\varphi(y_1,\tilde\rho(y_1)) =&\ \tilde\varphi(0) + \tilde\varphi'(0)y_1 + \frac12 \tilde\varphi''(0)y_1^2 + \frac16 \tilde\varphi'''(0)y_1^3 \\
		&\ + \frac1{24} \tilde\varphi^{(4)}(0)y_1^4 + O(|y_1|^{4+\beta}) \\
		=&\  \tilde\varphi(0) + \tilde\varphi'(0)y_1 + \Big(\frac14 z_1^2 y_1^2 + \frac16 z_1 y_1^3 +\frac1{24} y_1^4 \Big)  \tilde\varphi^{(4)}(0) \\
		&\   + \Big(O(|z_1|^{2+\beta}) y_1^2 + O(|z_1|^{1+\beta})y_1^3 + O(|z_1|^{\beta})y_1^4 \Big) + O(|y_1|^{4+\beta}) \\
		=&\ \tilde\varphi(0) + \tilde\varphi'(0)y_1 + \frac1{24} \tilde\rho(y_1)   \tilde\varphi^{(4)}(0) + O \big(|z_1|^{\beta} + |y_1|^{\beta}\big) \tilde\rho(y_1),
	\end{split}
	\end{equation}
which thus implies
	\begin{equation}\label{g-v-up} 
		\tilde\varphi(y_1) - \tilde\varphi(0) - \tilde\varphi'(0)y_1 \le C_1 \tilde\rho(y_1), 
	\end{equation}
where the constant $C_1$ depends on $\|\varphi\|_{C^{3,1}(\p\Omega)}$.

\vskip5pt
With this setup, we can now construct the barrier function
	\begin{equation}\label{w-un-0}
		w(y) :=  \frac12 y_1^2 y_2^{q- \frac12} + Q y_2^{q} - M y_2 \quad\text{ in } \ \tilde D_h,
	\end{equation}
where $q>1$ and $Q, M$ are positive constants to be determined.
By differentiation, we have 
	\begin{equation*}
	\begin{split}
 		& w_{11} = y_2^{q-\frac12}, \\
 		& w_{12} = (q-\frac12) y_1y_2^{q-\frac32}, \\
 		& w_{22} = \frac12(q-\frac12)(q-\frac32) y_1^2y_2^{q-\frac52} + q(q-1) Q y_2^{q-2}.
	\end{split}
	\end{equation*}
Hence, in $\tilde D_h$ we obtain
	\begin{equation}\label{dw-0}
	\begin{split}
		\det D^2w & = y_2^{2q-3} \Big( q(q-1)Q y_2^{\frac12} - \frac12(q-\frac12)(q+\frac12)y_1^2 \Big) \\
			& \ge y_2^{2q-3} \Big( q(q-1)Q y_2^{\frac12} - (q-\frac12)(q+\frac12)y_2^{\frac12} \Big) \\
			& = y_2^{2q-\frac52} \Big( q(q-1)Q - (q-\frac12)(q+\frac12)  \Big) \ge f_0 \ge f = \det D^2 u,
	\end{split}
	\end{equation}
if choosing $q\in(1,\frac54)$, $Q=\frac{q}{q-1}$ and $h \le (4f_0)^{2/(4q-5)}$.

Next, we show that $w \le 0$ on $\p \tilde D_h$.
Indeed, on $\p \tilde D_h\cap\{y_2<h\}$, by \eqref{y2-r3}, we get
	\begin{equation}\label{w-0}
	\begin{split}
		w & = \frac12 y_1^2 y_2^{q- \frac12} + Q y_2^{q} - M y_2 \\
			& \le  y_2^{q} + Q y_2^{q} - M y_2 \\
			& = y_2 \Big( (1+Q)y_2^{q-1} - M \Big) \\
			& \le y_2 \Big( (1+Q) h^{q-1} - M \Big) \le 0
	\end{split}
	\end{equation}
if we choose $M= (1+Q) h^{q-1}$.
On $\p \tilde D_h\cap\{y_2=h\}$, by convexity we also have $w\leq 0$. 

Last, fix the constant $h=(4f_0)^{2/(4q-5)}$ and let
	\begin{equation*}
		\hat w(y) =  w(y) + \tilde\varphi(0_y) + \tilde\varphi'(0_y)y_1 - \Big(C_1 + \frac{\sup_\Omega|u|}{h} \Big) y_2.
	\end{equation*}
It is easy to check that $\det D^2 \hat w \ge \det D^2 u$ in $ \tilde D_h$ from \eqref{dw-0}, and $\hat w \le u$ on $\p \tilde D_h$ from \eqref{g-v-up} and \eqref{w-0}.
Hence, 
	\begin{equation*}
		u_{\nu}(z)\leq \left|\frac{\p}{\p y_2}u(0_y)\right| \le |\hat w_{y_2}(0_y)|= M + C_1 + \frac{\sup|u|}{h} =:C
	\end{equation*} 
as desired in \eqref{ones}, where the constant $C$ is independent of the curvature of $\p\Omega$ at $z$.
\end{proof}

\begin{corollary}\label{lemma5.1}
Assume that $\p\Omega \in C^1$ and $u|_{\p\Omega} \in C^1$.
Then, $Du$ is well defined on $\p\Omega$ in the sense that for every $z\in\p\Omega$, there is a unique affine function $\ell(x)=a_0+\sum_{i=1}^2a_i(x_i-z_i)$ such that
	\begin{equation}\label{bdc1} 
		|u(x) - \ell(x)| = o(|x-z|)\quad\text{ as } x\to z. 
	\end{equation}
\end{corollary}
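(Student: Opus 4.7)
The plan is as follows. Uniqueness of $\ell$ is immediate: if two affine functions $\ell_1, \ell_2$ satisfy the asymptotic identity, then $\ell_1 - \ell_2$ is affine and $o(|x-z|)$ at $z$, forcing $\ell_1\equiv\ell_2$. For existence, fix $z\in\p\Omega$; by translating and rotating, I may assume $z=0$ and the inward unit normal to $\p\Omega$ at $0$ is $e_2$. Locally $\p\Omega=\{x_2=g(x_1)\}$ for a convex $C^1$ function $g$ with $g(0)=g'(0)=0$. Since subtracting an affine function changes neither the hypotheses nor the conclusion, I may further normalize $u(0)=0$ and $\psi'(0)=0$, where $\psi(x_1):=u(x_1,g(x_1))$; in particular $\psi(x_1)=o(|x_1|)$ as $x_1\to 0$. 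By convexity of $t\mapsto u(0,t)$ together with the Lipschitz bound from Lemma \ref{thm-gra-est}, the quotient $u(0,t)/t$ is non-decreasing in $t$ and uniformly bounded, so
\[
 a_2 := \lim_{t\to 0^+} \frac{u(0,t)}{t}
\]
exists and is finite. The candidate is $\ell(x):=a_2 x_2$; the task is to show $|u(x)-\ell(x)|=o(|x|)$ as $x\to 0$ in $\overline\Omega$.

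For the upper bound, I would use the horizontal slice $\{(y_1,x_2):g(y_1)\le x_2\}\subset\overline\Omega$ through $x=(x_1,x_2)$: its two endpoints lie on $\p\Omega$, where $u=\psi=o(g^{-1}(x_2))$; since $y_1\mapsto u(y_1,x_2)$ is convex, $u(x)\le o(g^{-1}(x_2))$. Because $g'(0)=0$, one has $x_2=g(g^{-1}(x_2))=o(g^{-1}(x_2))$ and $|x|\asymp g^{-1}(x_2)$, whence both $u(x)$ and $a_2 x_2$ are $o(|x|)$. For the matching lower bound, I would invoke the interior $C^{1,\alpha}$ estimate (Theorem \ref{lem-3.2}) to give meaning to $\p_{x_1}u(0,x_2)$ at interior points $(0,x_2)$ with $x_2>0$. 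Applying one-dimensional convexity on the same horizontal slice with endpoint values $o(g^{-1}(x_2))$ and interior value $u(0,x_2)=a_2 x_2+o(x_2)=o(g^{-1}(x_2))$ yields $\p_{x_1}u(0,x_2)=o(1)$ as $x_2\to 0^+$. The subgradient inequality then gives
\[
 u(x_1,x_2)\ge u(0,x_2)+\p_{x_1}u(0,x_2)\cdot x_1 = a_2 x_2 + o(|x|),
\]
and combining with the upper bound completes the proof.

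The main obstacle is establishing the decay $\p_{x_1}u(0,x_2)=o(1)$ that powers the lower bound: it relies simultaneously on the interior $C^{1,\alpha}$ regularity to make sense of the partial derivative and on the vanishing tangential derivative of $\psi$ at $0$ to control its magnitude via convexity. The degenerate situation in which $g\equiv 0$ near $0$ requires separate handling, since $g^{-1}$ is then undefined; in that flat-boundary case I would argue analogously using vertical segments anchored on the flat piece of $\p\Omega$ together with the $C^1$ regularity of $\varphi$ along it.
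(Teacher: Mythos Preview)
Your overall strategy matches the paper's brief proof: normalize coordinates, set $a_2=\lim_{t\to 0^+}u(0,t)/t$ via convexity and the Lipschitz bound (the paper writes this as $u_2(0)=\lim_{t\to 0^+}u_2(te_2)$), then verify the expansion. Your lower-bound argument via $\partial_{x_1}u(0,x_2)=o(1)$ and the subgradient inequality is correct and in fact fills in more detail than the paper provides.

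The upper-bound step, however, contains a real error. The claim $|x|\asymp g^{-1}(x_2)$ is false: only $|x|\lesssim g^{-1}(x_2)$ holds, since for $x=(0,x_2)$ one has $|x|=x_2\ll g^{-1}(x_2)$. Hence your conclusion that both $u(x)$ and $a_2x_2$ are separately $o(|x|)$ does not follow; indeed $a_2x_2=a_2|x|$ on the normal axis whenever $a_2\neq 0$. The bound $u(x)\le o(g^{-1}(x_2))$ obtained from the two boundary endpoints is therefore too coarse. The fix is to interpolate on the horizontal slice between $(0,x_2)$ and the boundary endpoint $(y_1^+,x_2)$ on the side of $x_1$: convexity gives
\[
u(x_1,x_2)\le u(0,x_2)+\frac{x_1}{y_1^+}\bigl(\psi(y_1^+)-u(0,x_2)\bigr),
\]
and since $u(0,x_2)=a_2x_2+o(x_2)$, $\psi(y_1^+)=o(y_1^+)$, and $x_2/y_1^+=g(y_1^+)/y_1^+=o(1)$, the right side equals $a_2x_2+o(|x_1|)+o(x_2)=a_2x_2+o(|x|)$ as required.
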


\begin{proof}
Indeed, this result holds in any dimension provided $u\in C^{1,\alpha}(\Omega)\cap C^{0,1}(\overline\Omega)$. See \cite[Lemma 4.1]{CTW-2022}. For completeness, we provide a brief proof. Choosing the proper coordinates we assume $z=0$ and $\Omega\subset\{x_2>\rho(x_1)\}$ with $\rho(0)=0$, $\rho'(0)=0$. Since $u|_{\p\Omega} \in C^1$, by differentiation we have
	\[ \left.\frac{d}{dx_1}\right|_{x_1=0}u(x_1,\rho(x_1)) = u_1(0) + u_2(0)\rho'(0) = u_1(0),\]
which is well-defined, where the last equality is due to $u\in C^{0,1}(\overline\Omega)$. 
By the convexity of $u$, $u_2(te_2)$ is monotone in $t$. Hence we can define $u_2(0)=\lim_{t\to0^+}u_2(te_2)$.
Therefore, the unique affine function $\ell(x)=u(0)+u_1(0)x_1+u_2(0)x_2$ satisfies \eqref{bdc1}.
\end{proof}

\vskip5pt
\subsection{Boundary $C^{1,\alpha}$ estimates} 
We are now ready to derive the boundary $C^{1,\alpha}$ estimate.

\begin{theorem}\label{thm5.1}
Let $u$ be a convex solution to \eqref{u-fp} with $f$ satisfying \eqref{dc} and \eqref{f0}.
Assume that $\p\Omega$ satisfies \eqref{lb-0}, and that $\varphi\in C^{4,\beta}$ for some $\beta\in(0,1]$, with $\varphi''(0)=0$.
Then, for any $z\in\p\Omega$,  
	\begin{equation}\label{bdryC1a}
		|Du(x)-Du(z)| \leq C|x-z|^\alpha \qquad\text{ for any } x\in\overline\Omega  \text{ near } z,
	\end{equation}
for some {$\alpha \in (0,\beta/4)$} and constant $C>0$ that depend only on $C_b, f_0, \beta,\Omega$ and $\|\varphi\|_{C^{4,\beta}(\p\Omega)}$.
\end{theorem}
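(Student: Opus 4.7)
The plan is to reduce, via the coordinate transform from Lemma \ref{thm-gra-est}, to the normalized situation where $z$ sits at the origin $0_y$ of the new coordinates, and to establish the pointwise decay $0 \leq \tilde u(y) \leq C|y|^{1+\alpha}$ with $\alpha \in (0, \beta/4)$, where $\tilde u$ denotes the convex solution after subtracting the tangent affine support $\ell_z$ of $u$ at $z$. By the convexity of $\tilde u$, this pointwise decay is equivalent to \eqref{bdryC1a}. The existence and uniqueness of $\ell_z$ follow from Lemma \ref{thm-gra-est} (uniform Lipschitz bound on $u$) together with Corollary \ref{lemma5.1}, while \eqref{taylor-1} combined with \eqref{y2-r3} yields the refined boundary bound
$$0 \leq \tilde u\bigl(y_1, \tilde\rho(y_1)\bigr) \leq C_1\bigl(|z_1|^\beta + |y_1|^\beta\bigr)\,\tilde\rho(y_1) \leq C_2\bigl(|z_1|^\beta + y_2^{\beta/4}\bigr)\,y_2 \quad \text{on }\p\tilde\Omega.$$

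The next step is to build a refined upper barrier that improves \eqref{w-un-0} by absorbing the sharper boundary trace above. A natural candidate is
$$\bar w(y) = \tfrac12\, y_1^2\, y_2^{q-\frac12} + Q\, y_2^q + A\bigl(|z_1|^\beta y_2 + y_2^{1+\beta/4}\bigr) - M\, y_2,$$
with $q \in (1, 5/4)$, $Q = q/(q-1)$, and $A, M, h$ chosen so that $\det D^2 \bar w \geq f_0 \geq f$ in $\tilde D_h$ (the extra purely $y_2$-terms only strengthen $\bar w_{22}$ in \eqref{dw-0}) and $\bar w \geq \tilde u$ on $\p \tilde D_h$. Comparison then gives $\tilde u \leq \bar w$ in $\tilde D_h$, which confines each sub-level set $S_h := \{\tilde u < h\}$ inside a region of vertical width $\sim h$ and horizontal width governed by the mixed quadratic-quartic profile of $\tilde\rho$ in \eqref{y2-r3}. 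The convex balance estimate \eqref{sigma}, applied to chords through $0_y$, yields a matching lower bound, so $S_h$ is squeezed between two comparable ``quasi-rectangles''; John's lemma then provides a unimodular affine map $T_h$ with $B_{1/2}\subset T_h(S_h)\subset B_2$.

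Defining $v(y') := h^{-1}\,\tilde u(T_h^{-1}y')$, one obtains a Monge-Amp\`ere equation on the normalized convex body $T_h(S_h)$ whose right-hand side still satisfies \eqref{dc} (both \eqref{dc} and the Monge-Amp\`ere operator are preserved, up to constants, under unimodular affine transforms). Caffarelli's decay estimate, in the form \eqref{inde} and Theorem \ref{lem-3.2}, applied at the image of $0_y$ then yields
$$\sup_{\tfrac12 S_h}\tilde u \leq \bigl(\tfrac12 - \sigma\bigr)\,h,$$
with $\sigma = \sigma(C_b)\in(0,1/2)$. Iterating this estimate at dyadic levels $h, h/2, h/4,\ldots$ produces $\tilde u(y) \leq C|y|^{1+\alpha}$ in a cone at $0_y$ for any $\alpha > 0$ with $2^{-(1+\alpha)} = \tfrac12 - \sigma$ and $\alpha < \beta/4$; pulling back through $\mathcal T^{-1}$ and restoring $\ell_z$ delivers \eqref{bdryC1a}.

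The main obstacle is to carry out this geometric normalization \emph{uniformly in} $z_1$. At $z_1 = 0$ the sub-level set $S_h$ is highly elongated ($|y_1|\sim h^{1/4}$, $y_2 \sim h$), whereas for $|z_1|$ bounded below the standard uniformly-convex scaling ($|y_1|\sim h^{1/2}$, $y_2\sim h$) prevails. The affine maps $T_h$ therefore interpolate between two distinct scaling regimes as $z_1\to 0$, and the constants in the barrier, in John's normalization, and in the rescaled doubling condition must be tracked carefully so that the final $\alpha$ and $C$ depend only on $C_b, f_0, \beta, \Omega$ and $\|\varphi\|_{C^{4,\beta}(\p\Omega)}$. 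The hypothesis $\varphi''(0)=0$ (the specialization of $(\mathcal P1)$ to $k=4$) enters here crucially through \eqref{var2-2}: it forces the boundary trace of $\tilde u$ to be controlled by $\tilde\rho(y_1)$ rather than by a generic quadratic in $y_1$, so the same decay exponent $\alpha < \beta/4$ serves uniformly across the crossover between the two scaling regimes.
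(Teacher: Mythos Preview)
Your iteration scheme has a structural gap: the decay machinery you invoke --- the balance estimate \eqref{sigma}, the decay \eqref{inde}, and Theorem~\ref{lem-3.2} --- requires $S_h \Subset \Omega$. At a boundary point, after subtracting the support plane, $\tilde u(0_y)=0$ with $0_y\in\partial\tilde\Omega$, so every sub-level set $S_h=\{\tilde u<h\}$ touches $\partial\tilde\Omega$; there is no positive $h_0$ at which to start, John's normalization does not keep the image inside the domain, and the rescaled problem carries a boundary piece to which Caffarelli's interior estimate does not apply. There is also a sign issue: with $\det D^2\bar w\geq f$ your $\bar w$ is a \emph{sub}solution, so comparison (with the matching boundary inequality) yields $\bar w\leq\tilde u$, not the upper bound $\tilde u\leq\bar w$ you claim; but even a correctly oriented barrier would not by itself feed an interior iteration at a boundary point.

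The paper avoids this by splitting tangential and normal directions. The tangential H\"older bound (Lemma~\ref{lemma5.2}) follows directly from $\varphi\in C^{1,\alpha}$ and convexity. The normal derivative is handled in two stages: first H\"older continuity of $u_\nu$ \emph{along} $\partial\Omega$ (Lemma~\ref{lemma5.3}), by comparing at two nearby boundary points through the sub-barrier \eqref{w-un}, with the case split $|z_1|\gtrless h^\gamma$ managing the quartic/quadratic crossover you correctly identified; then H\"older continuity of $u_\nu$ in the \emph{normal} direction (Lemma~\ref{lemma5.4}), where Lemma~\ref{lemma5.3} is fed in to control $u$ on the top face $\{y_2=h\}$ --- exactly the missing input that makes your barrier comparison circular. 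No sub-level-set iteration is used at the boundary; the barriers produce the derivative bounds directly, via a contradiction argument in the nontrivial sub-cases.
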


The proof of Theorem \ref{thm5.1} is divided into three lemmas.
In Lemma \ref{lemma5.2} we first establish the H\"older continuity for tangential derivatives, and then in Lemmas \ref{lemma5.3} and \ref{lemma5.4}, we address the normal derivative. 

\begin{lemma}\label{lemma5.2}
Let $\tau$ be a unit tangential vector of $\p\Omega$ at $z$,
and let $\varphi\in C^{1,\alpha}$ for some $\alpha\in(0,1]$.
Then,
	\begin{equation}\label{tanest}
		|u_{\tau}(x) - u_{\tau}(z)| \le C |x-z|^{\frac{\alpha}{1+\alpha}}
	\end{equation}
for any point $x\in\overline\Omega$ near $z$, where the constant $C>0$ depends only on $\alpha,\Omega, \sup|u|$ and $\|\varphi\|_{C^{1,\alpha}(\p\Omega)}$.
\end{lemma}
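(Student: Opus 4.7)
The plan is to reduce to a normalized setup and then exploit convexity of $u$ along the direction $\tau$. First, Corollary \ref{lemma5.1} guarantees that $Du(z)$ is well-defined at $z\in\p\Omega$. Subtracting the supporting affine function $\ell_z(x)=u(z)+Du(z)\cdot(x-z)$ and working with $\tilde u:=u-\ell_z$, we have $\tilde u\ge 0$ in $\overline\Omega$, $\tilde u(z)=0=D\tilde u(z)$, and the $C^{1,\alpha}$ regularity of $\varphi$ combined with the vanishing tangential derivative at $z$ gives the boundary bound
\[
0\le \tilde u(y)\le C_0|y-z|^{1+\alpha},\qquad y\in\p\Omega\text{ near }z,
\]
with $C_0$ depending on $\|\varphi\|_{C^{1,\alpha}(\p\Omega)}$. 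Since $\tilde u_\tau(z)=0$, it suffices to establish the pointwise bound $|\tilde u_\tau(x)|\le C|x-z|^{\alpha/(1+\alpha)}$ for $x\in\overline\Omega$ near $z$.

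Next, fix $x$ with $r:=|x-z|$ small and consider the one-dimensional convex function $g(s):=\tilde u(x+s\tau)$ on its maximal admissible interval $[-h^-,h^+]\subset\mathbb R$ with $x\pm h^\pm\tau\in\p\Omega$. Using $g\ge 0$, the support inequality for convex functions yields, for any admissible $t>0$,
\[
\tilde u_\tau(x)\le \frac{g(t)}{t},\qquad \tilde u_\tau(x)\ge -\frac{g(-t)}{t}.
\]
When the endpoints $x\pm t\tau$ lie on $\p\Omega$, the boundary bound gives $g(\pm t)\le C_0(r+t)^{1+\alpha}$, hence $|\tilde u_\tau(x)|\le C_0(r+t)^{1+\alpha}/t$. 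Selecting the balancing scale $t=r^{1/(1+\alpha)}$—which exceeds $r$ for small $r$, so that $r+t\sim t$—produces $|\tilde u_\tau(x)|\le Cr^{\alpha/(1+\alpha)}$, provided that $h^\pm\ge t$, i.e., the line through $x$ in direction $\pm\tau$ has extent at least $r^{1/(1+\alpha)}$ inside $\overline\Omega$.

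The main obstacle is the case where one of $h^\pm$ is smaller than the balancing scale $r^{1/(1+\alpha)}$, meaning $x$ sits close to $\p\Omega$ in the $\tau$-direction. In this regime the plan is to transfer the estimate to the boundary endpoint $y^+:=x+h^+\tau\in\p\Omega$ via the convex monotonicity $\tilde u_\tau(x)\le \tilde u_\tau(y^+)$, and to decompose $\tau$ into the local tangent $\tau_{y^+}$ and the inward normal $\nu_{y^+}$ of $\p\Omega$ at $y^+$. The tangential component $|\tilde u_{\tau_{y^+}}(y^+)|$ is controlled by the $C^{1,\alpha}$ regularity of $\tilde\varphi$ along $\p\Omega$ (using $\tilde\varphi_\tau(z)=0$), while the normal component is bounded by the global Lipschitz estimate of Lemma \ref{thm-gra-est}, multiplied by the small angle between $\tau$ and the local tangent at $y^+$, which vanishes as $y^+\to z$ since $\p\Omega\in C^1$. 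A symmetric argument in the $-\tau$ direction yields the matching lower bound. The delicate bookkeeping in this step—tracking that the normal-derivative contribution is genuinely lower order once multiplied by the small angle, and that the boundary endpoint $y^+$ is close enough to $z$ for the $C^{1,\alpha}$ regularity to apply—is what makes this case the technical heart of the argument.
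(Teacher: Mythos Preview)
Your decomposition and your handling of the ``obstacle'' case (small $h^\pm$) are essentially the paper's Case~1, and that part is correct. The gap is in what you call the easy case. You derive $g(\pm t)\le C_0(r+t)^{1+\alpha}$ only when $x\pm t\tau\in\p\Omega$, i.e.\ when $t=h^\pm$; you then ``select'' $t=r^{1/(1+\alpha)}$, but for this choice $x+t\tau$ is an \emph{interior} point (you are precisely in the regime $h^\pm\ge r^{1/(1+\alpha)}$), and the boundary bound says nothing about $g(t)$ there. Convexity of $g$ gives at best $g(t)\le\max\{g(-h^-),g(h^+)\}$, which is useless when $h^\pm$ are of unit size.

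This interior case is where the paper does the actual work. One bounds $\tilde u$ at the interior point $x+t\tau$ by projecting transversally to $\tau$ (after normalising $z=0$, $\tau=e_1$, this is the $e_2$-direction) to a nearby boundary point and invoking the global Lipschitz bound of Lemma~\ref{thm-gra-est}:
\[
\tilde u(x+te_1)\le C_1(x_1+t)^{1+\alpha}+C_2\,x_2,
\]
the first term coming from $\varphi\in C^{1,\alpha}$ at the boundary projection, the second from the Lipschitz correction over distance at most $x_2\le r$. Dividing by $t$ and choosing $t=\max\{x_1,\,x_2^{1/(1+\alpha)}\}$ then gives $C|x|^{\alpha/(1+\alpha)}$. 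So the Lipschitz estimate you reserve for your obstacle case is in fact the key ingredient in the large-$h^\pm$ case; the small-$h^\pm$ case, which you flag as the technical heart, is by comparison immediate.
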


\begin{proof}
The proof follows from \cite{CTW-2022}. For completeness, we provide a brief outline below.  

By changing coordinates, we may assume that $z=0$, $\Omega\subset\{x_2>0\}$ and $\tau=e_1$ at $0$. By Corollary \ref{lemma5.1}, we may assume that $\ell\equiv0$ is the support of $u$ at $0$ such that $Du(0)=0$. 
Thus, it suffices to show that 
	\begin{equation}\label{bdalpha1}
		|u_{1}(x)|\leq C|x|^{\frac{\alpha}{1+\alpha}} \ \ \mbox{ for $x\in\overline\Omega$ near the origin}. 
	\end{equation}
Let $L$ be the straight line passing through $x$ and parallel to $e_1$ and $L\cap\partial\Omega=\{x_l, x_r\}$ such that $(x_r-x_l)\cdot e_1=|x_r-x_l|$.

Since $x_r\in\partial\Omega$, we have $|\partial_\xi u(x_r)|=|\partial_\xi\varphi(x_r)| \leq C|x_r|^\alpha$, where $\xi$ is the unit tangential vector at $x_r$. 
Let $\theta$ be the angle between $\xi$ and $e_1$. Since $\partial\Omega$ is smooth, we have $\theta \leq C|x_r|$. Hence,
	\begin{equation}\label{bdal2}
		|u_1(x_r)| \leq |u_\xi(x_r)|\cos\theta + |u_\gamma(x_r)|\sin\theta \leq C|x_r|^\alpha,
	\end{equation}
where $\gamma$ is the unit inner normal of $\p\Omega$ at $x_r$.

To prove \eqref{bdalpha1}, we may assume that $u_1(x)\geq0$; otherwise, we can replace $e_1$ by $-e_1$. 
By the convexity of $u$, we have $u_1(x) \leq u_1(x_r)$. 
If $|x|\geq\frac12|x_r|$, then by \eqref{bdal2}, we easily obtain
	\begin{equation}\label{bdal3}
		u_1(x) \leq C|x|^\alpha \leq C|x|^{\alpha/(1+\alpha)}.
	\end{equation}

If $|x|<\frac12|x_r|$, we may assume that $x_1\geq0$; otherwise, by the convexity of $u$, we have $0\leq u_1(x) \leq u_1(0,x_2)$ and it suffices to prove \eqref{bdalpha1} at $\bar x=(0,x_2)$. 
For a small $t>0$ such that $x+te_1\in\Omega$, by the convexity of $u$, we find that
	\begin{equation*}
		u_1(x) \leq \frac{u(x+te_1) - u(x)}{t} \leq \frac{u(x+te_1)}{t}.
	\end{equation*}
By Lemma \ref{thm-gra-est} and the assumption $\varphi\in C^{1,\alpha}$, we obtain
	\begin{equation*}
		u(x+te_1) \leq C_1(x_1+t)^{1+\alpha} + C_2x_2,
	\end{equation*}
where $C_1$ depends on $\partial\Omega$ and $\|\varphi\|_{C^{1,\alpha}(\partial\Omega)}$, and $C_2$ is the gradient bound in \eqref{C1bound}. Thus,
	\begin{equation}\label{bdal4}
		u_1(x) \leq C_1\frac{(x_1+t)^{1+\alpha}}{t} + C_2\frac{x_2}{t} =: I_1+I_2.
	\end{equation}
Choose $t = \max\{x_1, x_2^{{1}/{(1+\alpha)}}\}$.
Since $t \geq x_2^{{1}/{(1+\alpha)}}$, the second term $I_2 \leq C_2x_2^{{\alpha}/{(1+\alpha)}}\leq C_2|x|^{{\alpha}/{(1+\alpha)}}$. Since $t \geq x_1$, the first term $I_1 \leq 4C_1t^\alpha$.
If $x_1\geq x_2^{{1}/{(1+\alpha)}}$, then $t=x_1$, and thus $I_1\leq 4C_1x_1^\alpha\leq 4C_1|x|^{\alpha/(1+\alpha)}$.
If $x_1<x_2^{{1}/{(1+\alpha)}}$, then $t=x_2^{{1}/{(1+\alpha)}}$, and thus $I_1 \leq 4C_1x_2^{\alpha/(1+\alpha)} \leq 4C_2|x|^{\alpha/(1+\alpha)}$. Therefore, by \eqref{bdal4} we obtain
	\begin{equation}\label{bdal5}
		u_1(x) \leq C|x|^{\alpha/(1+\alpha)}.
	\end{equation}
Note that we may replace $t$ with $ct$ for a small constant $c>0$ to ensure $x+te_1\in\Omega$. 
	
Combining \eqref{bdal3} and \eqref{bdal5}, we thus achieve \eqref{bdalpha1} as desired. 	
\end{proof}

\begin{remark}\label{rmk3.1}
The estimate \eqref{tanest} holds for general convex domains in any dimension, provided that $\p\Omega\in C^{1,\alpha}$. See \cite{CTW-2022}.
\end{remark}

Given the assumption \eqref{lb-0}, $\p\Omega$ has an isolated degenerate point at the origin.
For any small constant $\delta_1>0$, there exists $\delta_2>0$ such that $\kappa(x) \ge \delta_2$ for all $x_2 \ge \delta_1$, where $\kappa(x)$ is the curvature of $\p\Omega$ at $x\in\p\Omega$.
Therefore, it suffices to prove Theorem \ref{thm5.1} for all $z\in \p\Omega \cap \{x_2 < \delta_1\}$ for some small positive constant $\delta_1$ to be determined later.

\begin{lemma}\label{lemma5.3}
Let $u$ be the convex solution to \eqref{u-fp}.
Under the hypotheses of Theorem \ref{thm5.1}, there exists an $\alpha\in(0, \beta/4)$ such that, for any $z \in \p\Omega$ near the origin,
	\begin{equation}\label{bdnm1}
		|u_{\nu}(x) - u_{\nu}(z)| \le C |x-z|^{\alpha} \quad \text{ for any } x\in\partial\Omega \  \text{ near $z$},
	\end{equation}
where $\nu$ is the unit inner normal of $\partial\Omega$ at $z$,  and the constants $\alpha$ and $C$ depend only on $f_0, \beta$ and $\|\varphi\|_{C^{4,\beta}(\p\Omega)}$, but independent of $z$.
\end{lemma}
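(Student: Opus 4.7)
The plan is to pin down the normal derivative $u_\nu(z)$ at each boundary point $z$ near the origin by means of two-sided barriers, yielding an expansion $u_\nu(z) = F(z) + O(|z|^\alpha)$ with $F$ H\"older continuous in $z$ through the boundary jet of $\varphi$; inequality \eqref{bdnm1} will then follow from the triangle inequality together with the $C^{4,\beta}$ regularity of $\varphi$.

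\emph{Step 1 (Localisation).} For each such $z$, apply $\mathcal T$ from \eqref{y=tx} to flatten the tangent at $z$, so that $\partial\tilde\Omega$ is locally $\{y_2 = \tilde\rho(y_1)\}$ with $\tilde\rho$ as in \eqref{y2-r0}. The Taylor expansion \eqref{taylor-1} gives
\[ \tilde\varphi\bigl(y_1,\tilde\rho(y_1)\bigr) - \tilde\varphi(0) - \tilde\varphi'(0)y_1 = \tfrac{1}{24}\tilde\varphi^{(4)}(0)\,\tilde\rho(y_1) + O\!\bigl((|z_1|^\beta + |y_1|^\beta)\tilde\rho(y_1)\bigr), \]
so the expected leading slope of $\tilde u$ in the $y_2$ direction at $0_y$ is, up to controlled error, an explicit function of the fourth-order jet of $\tilde\varphi$ at $z$.

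\emph{Step 2 (Two-sided barriers).} I would sharpen the construction of Lemma \ref{thm-gra-est} by introducing upper and lower barriers
\[ w_\pm(y) = \tilde\varphi(0) + \tilde\varphi'(0)y_1 + A_\pm\,\tilde\rho(y_1) + \tfrac12 y_1^2 y_2^{q-1/2} + Q y_2^q \mp \Lambda_\pm y_2, \]
with $q \in (1,5/4)$, $Q = q/(q-1)$, and $A_\pm = \tfrac{1}{24}\tilde\varphi^{(4)}(0) \pm C(|z_1|^\beta + h^\beta)$. The Pogorelov-type block $\tfrac12 y_1^2 y_2^{q-1/2} + Q y_2^q$ yields $\det D^2 w_\pm \ge f_0 \ge f$ as in \eqref{dw-0}, so $w_-$ serves as a convex subsolution that is below $\tilde\varphi$ on $\partial\tilde\Omega\cap\tilde D_h$ (by Step~1) and below $\tilde u$ on the top slice $\{y_2 = h\}$ (by Lemma \ref{thm-gra-est}); the comparison principle then gives $w_- \le \tilde u$ in $\tilde D_h$, yielding a lower bound on $\tilde u_{y_2}(0_y)$.

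\emph{Step 3 (Main obstacle: matching upper bound and conclusion).} The crux is producing the matching upper bound for $\tilde u_{y_2}(0_y)$: since $f \ge 0$ may vanish, no convex supersolution with $\det D^2 \le f$ is immediately available, and $w_+$ must instead be anchored on the top slice $\{y_2 = h\}$ using the tangential H\"older estimate of Lemma \ref{lemma5.2} combined with the uniform gradient bound of Lemma \ref{thm-gra-est}; the parameter $h$ must then be balanced against $|z_1|$ so that the coefficient error $|z_1|^\beta + h^\beta$ matches the barrier reach, producing an exponent $\alpha \in (0, \beta/4)$. Evaluating $w_\pm$ at $0_y$ pinches $\tilde u_{y_2}(0_y) = F(z_1) + O(|z_1|^\alpha)$ with $F$ explicitly determined by $z_1$ and $\tilde\varphi^{(4)}(z_1)$. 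Translating back via $\mathcal A^T$ and applying $|\tilde\varphi^{(4)}(z_1) - \tilde\varphi^{(4)}(x_1)| \le C|x-z|^\beta$ (from $\varphi \in C^{4,\beta}$) together with $|\mathcal A_z - \mathcal A_x| \le C|x-z|$, we deduce \eqref{bdnm1} for any $\alpha \in (0, \beta/4)$.
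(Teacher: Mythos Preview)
Your plan contains a genuine gap. The core claim---that two-sided barriers built at $z$ alone pinch $\tilde u_{y_2}(0_y) = F(z_1) + O(|z_1|^\alpha)$ with $F$ determined by the local jet of $\varphi$---cannot hold, because $u_\nu(z)$ is a global quantity: with $\varphi\equiv 0$, for instance, your $F$ vanishes identically while $u_\nu(0)$ still depends on all of $f$. Concretely, the lower barrier $w_-$ fails on the top slice $\{y_2=h\}$: there you have no sharp lower bound on $\tilde u$, and the gradient estimate together with Lemma~\ref{lemma5.2} gives at best $\hat u(y_1,h)\ge -Ch^{(1+\alpha')/4}$, forcing $\Lambda_-\gtrsim h^{-\theta}$ and recovering only the crude bound $u_{y_2}(0_y)\ge -C$. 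Your Step~3 inherits the same obstruction for $w_+$, and ``anchoring on the top slice'' does not produce the missing supersolution. Even granting the pinching, an error of order $|z_1|^\alpha$ rather than $|x-z|^\alpha$ would not yield \eqref{bdnm1} after the triangle inequality.

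The paper sidesteps all of this with a different normalisation: fix the second boundary point $\tilde x$ as well, apply $\mathcal T$ at $z$, and subtract the support plane of $u$ at $\tilde x$ (not at $z$), so that $u(\tilde x)=0$, $Du(\tilde x)=0$ and $u\ge 0$. On the top slice this gives the free lower bound $\hat u\ge -C|\tilde x|\,h^{1/4}$, coming only from the smallness of the subtracted affine function; on $\partial\tilde\Omega$ one uses the Taylor expansion \eqref{taylor-1} together with the inequality $\tilde\varphi^{(4)}(0)\ge -C(|\tilde x|^\beta+|z_1|^\beta)$ (a consequence of $\tilde\varphi''(\tilde x)\ge 0$). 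A single \emph{sub}solution barrier \eqref{w-un} then yields the one-sided bound $u_2(0)-u_2(\tilde x)=u_2(0)\ge -C|\tilde x|h^{-3/4}-Ch^{\hat\alpha}$; choosing $|\tilde x|=h^{3/4+\hat\alpha}$ gives $u_2(0)-u_2(\tilde x)\ge -C|\tilde x|^\alpha$, and interchanging the roles of $z$ and $\tilde x$ supplies the reverse inequality. No upper barrier is ever required.
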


\begin{proof}
At $z\in\partial\Omega$, we make the coordinate change $y=\mathcal{T}(x)$ as defined in \eqref{y=tx} and \eqref{y=tx-1}, such that $\mathcal{T}(z)=0_y$ represents the origin in the $y$-coordinates. Locally, $\tilde\Omega=\mathcal{T}(\Omega)\subset\{y_2>0\}$ satisfies \eqref{y2-r0} and \eqref{y2-r3}, while the boundary function $\tilde\varphi$ satisfies \eqref{var2-2}--\eqref{g-v-up}. 
Unless stated otherwise, the subsequent calculations will be conducted in the $y$-coordinates. 
It suffices to show that
	\begin{equation}\label{bdnm2}
		|u_2(\tilde x) - u_2(0)| \leq C |\tilde x|^\alpha
	\end{equation}
for any $\tilde x\in\partial\tilde\Omega$ near the origin. 
In the following, we fix such an $\tilde x$.
By subtracting a linear function we assume that $\{y_{3} = 0\}$ is the support plane of $u$ at the point $\tilde x$, such that $u(\tilde x)=0$, $Du(\tilde x)=0$ and $u\geq0$.
We shall construct a barrier function $w$ at the origin to prove \eqref{bdnm2}, specifically 
	\begin{equation*}\label{bdnm3}
		|u_2(0)| \leq C |\tilde x|^\alpha.
	\end{equation*}

Let $\tilde D_h = \{y \in \tilde\Omega ~|~ 0< y_2 < h\}$ with $h>0$ being small, 
and define the function  
	\begin{equation}\label{w-un}
		w(y) =  \frac12 y_1^2 y_2^{q- \frac12} + \frac{q}{q-1} y_2^{q} - B  y_2,
	\end{equation}
where $q\in(1,\frac54)$ and $B>0$ are constants to be determined. 
By computation in \eqref{dw-0}, we have
	\begin{equation*}
		\det\,D^2w = y_2^{2q-3}\left( q^2y_2^{\frac12} - \frac12(q^2-\frac14)y_1^2 \right).
	\end{equation*}
From \eqref{y2-r3}, $y_1^2\leq 2y_2^{1/2}$. Thus, we obtain
	\begin{equation}\label{compMA}
		\det\,D^2w \geq \frac14y_2^{2q-\frac52} \geq f_0 \geq \det\,D^2u \quad\text{ in }\tilde D_h,
	\end{equation}
provided $h \le (4f_0)^{2/(4q-5)}$. 

To compare $w$ with $u$ on $\partial \tilde D_h$, we define
	\begin{equation}\label{nhatu} 
		\hat u(y) = u(y) - \left( u(0) + u_{1}(0) y_1 \right).
	\end{equation}
Since $\varphi\in C^{4,\beta}$ and $Du(\tilde x)=0$, by \eqref{bdal2} we have
	\begin{equation*}
		0 \le u(0) \le C|\tilde x|^2, \quad |u_{1}(0)| \le C|\tilde x|.
	\end{equation*}
On $\p \tilde D_h \cap \{y_2 =h\}$, since $u\geq0$ and $|y_1|\leq \sqrt{2}h^{1/4}$, we obtain
	\begin{equation*}
		\hat u(y) \geq -C|\tilde x|^2 - C|\tilde x|h^{\frac14} \geq - 2C|\tilde x|h^{\frac14},
	\end{equation*}
if we choose $h$ such that $|\tilde x|\leq h^{1/4}$.
Meanwhile, since $q>1$, for small $h>0$ and sufficiently large $B>0$, we get
	\begin{equation}\label{wB1}
		w(y) \leq \frac{2q-1}{q-1}h^{q} - Bh \leq -\frac12Bh.
	\end{equation}
Hence, by choosing $B\geq C_1|\tilde x|h^{-3/4}$, we obtain
	\begin{equation}\label{combdr1}
		w \leq \hat u \quad\text{ on } \p \tilde D_h \cap \{y_2 =h\}. 
	\end{equation}

On $\p \tilde D_h \cap \{y_2 <h\}$, from \eqref{nhatu}, we have $\hat u=\hat \varphi$ with
	\begin{equation}\label{hatphi}
	    \hat\varphi(y) = \tilde\varphi(y)-\left(\tilde\varphi(0)+\tilde\varphi'(0)y_1\right),
    \end{equation}
where $\tilde\varphi$ has been extended in $\tilde D_h$ as per \eqref{extphi}. 
To estimate $\hat u$ on the boundary $\p\tilde\Omega$, we consider the following two cases:

\noindent\textbf{Case I:} Suppose $|z_1| > h^\gamma$ for some $\gamma\in(0,1/6)$.
Since $\delta_1$ and $h$ are small, it follows from \eqref{y2-r3} that
    \begin{equation}\label{y2-r4}
	     y_2^{\frac12} \le |y_1| \le  2 |z_1|^{-1} y_2^{\frac12} \le 2 y_2^{\frac12-\gamma} 
	     \quad \text{for }   (y_1,y_2)\in \p\tilde \Omega \cap \{y_2 < h\}.
    \end{equation}
Since $\{y_3=0\}$ is the support plane of $u$ at $\tilde x$, the convexity of $u$ implies $\tilde\varphi''(\tilde x)\geq 0$, and hence $\tilde\varphi''(0)\geq -C|\tilde x|$.
Using Taylor's expansion, we obtain
    \begin{equation*}
		\hat\varphi(y) = \frac12 \tilde\varphi''(0)y_1^2  + O(|y_1|^{3}) \ge -C(|\tilde x| y_1^2 +|y_1|^3)
    \end{equation*}
for some constant $C>0$ depending on $\|\varphi\|_{C^{2,1}(\p\Omega)}$.

Now, by choosing $h$ such that $|\tilde x|\leq h^{\frac12}$ and noting that $\gamma<\frac16$, it follows from \eqref{y2-r4} that on $\p \tilde D_h \cap \{y_2<h\}$, 
    \begin{equation}\label{hat-u-1}
    	\begin{split}
    		\hat u(y) =\hat\varphi(y) &\ge -C (h^{\frac12}y_1^2 + y_2^{\frac32-3\gamma}) \\
    		&\ge -C (h^{\frac12}|z_1|^{-2}y_2 + h^{\frac12-3\gamma}y_2) \\
    		&\ge -C h^{\frac12-3\gamma}y_2
    	\end{split}
	\end{equation}
for a different $C>0$, where the last inequality uses the assumption $|z_1| > h^\gamma$.

\noindent\textbf{Case II:}
Suppose $|z_1| \le h^\gamma$.
By \eqref{taylor-1} and \eqref{hatphi}, we have
\begin{equation}\label{hathat}
	\hat\varphi(y) = \frac{1}{24} \tilde\varphi^{(4)}(0) y_2 + O(|z_1|^\beta + |y_1|^\beta) y_2.
\end{equation}
Similar to \eqref{var-2-3}, by Taylor's expansion we have
    \begin{equation*} 
    		\tilde\varphi''(\tilde x) = \frac{|\tilde x_1+z_1|^2}{2}\tilde\varphi^{(4)}(-z_1) + O(|\tilde x_1+z_1|^{2+\beta}). 
    \end{equation*}
Recall that $\{y_3=0\}$ is the support plane of $u$ at $\tilde x$ and $\tilde\varphi''(\tilde x)\geq0$. 
From above, we have $\tilde\varphi^{(4)}(-z_1)\geq -C(|\tilde x_1|^\beta+|z_1|^\beta)$, and hence $\tilde\varphi^{(4)}(0) \ge - C(|\tilde x_1|^\beta+|z_1|^\beta)$ due to $\tilde\varphi\in C^{4,\beta}$.
Consequently, by \eqref{hathat}, we have
    \begin{equation*} 
	\hat u(y) =\hat\varphi(y) \ge - C(|\tilde x_1|^\beta + |z_1|^{\beta} + |y_1|^\beta) y_2 \quad \text{on }\ \partial\tilde D_h\cap \{y_2<h\}.
    \end{equation*}
From \eqref{y2-r3}, we get $|y_1|\leq Cy_2^{1/4}\leq Ch^{1/4} \leq Ch^\gamma$ since $\gamma<1/6$. Then by the assumption $|z_1|\leq h^\gamma$, as far as $|\tilde x_1| \leq h^{1/6}$, we can obtain 
    \begin{equation}\label{hat-u-2}
	\hat u(y) \ge - Ch^{\gamma\beta} y_2 \quad \text{on }\ \partial\tilde D_h\cap \{y_2<h\}.
    \end{equation}
 
Therefore, combining \eqref{hat-u-1} and \eqref{hat-u-2} we obtain
\begin{equation*}
	\hat u(y)  \ge - C h^{\hat\alpha} y_2 \quad \text{on} \ \p \tilde D_h \cap \{y_2<h\},
\end{equation*}
where $\hat\alpha = \min\{\frac12-3\gamma, \gamma\beta\}$.

Recalling the definition of $w$ in \eqref{w-un} and noting that $y_1^2\leq2y_2^{1/2}$ for $y\in\partial\tilde\Omega$, we have
	\begin{equation}\label{wB2}
		w(y) \leq \left(\frac{2q}{q-1}h^{q-1} - B\right)y_2 \quad \text{on} \ \ \p \tilde D_h \cap \{y_2<h\}.
	\end{equation}
Therefore, by choosing $B\geq \frac{2q}{q-1}h^{q-1}+ Ch^{\hat\alpha}$, we obtain
	\begin{equation}\label{combdr2}
		w \leq \hat u \quad \text{on} \ \ \p \tilde D_h \cap  \{y_2<h\}.
	\end{equation}
   
Finally, set $q=1+ \hat\alpha$ in \eqref{w-un} and define $B=C_1|\tilde x|h^{-3/4} + Ch^{\hat\alpha}$ for a different $C>0$. From \eqref{combdr1} and \eqref{combdr2}, we have $w(0)=\hat u(0)$ and 
	\begin{equation*}
		w \leq \hat u \quad\text{on } \ \p\tilde D_h.
	\end{equation*} 
Applying \eqref{compMA} and the comparison principle, we conclude that
	\begin{equation*}
		u_2(0)=\hat u_2(0) \geq w_2(0) \geq - C_1|\tilde x|h^{-3/4} - Ch^{\hat\alpha}.
	\end{equation*}
Choosing $h$ such that $|\tilde x| = h^{\frac34+\hat\alpha}$, we then obtain
	\begin{equation*}
		u_2(0) - u_2(\tilde x) \geq - Ch^{\hat\alpha}.
	\end{equation*} 
Similarly, we have $u_2(\tilde x) - u_2(0) \geq - Ch^{\hat\alpha}$. 
Hence, we obtain \eqref{bdnm2} for $\alpha = \frac{4\hat\alpha}{3+4\hat\alpha}$.
This completes the proof. 
\end{proof}

\begin{lemma}\label{lemma5.4}
Let $u$ be the solution to \eqref{u-fp}.
Assume the same conditions as in Lemma \ref{lemma5.3}.
Then, for any $z\in\p\Omega$ near the origin and $h>0$ small,
	\begin{equation*}\label{u-x2-1}
		|u_{\nu}(z+h\nu) - u_{\nu}(z)| \le C h^{\alpha}  
	\end{equation*}
for some $\alpha \in(0,\beta/4)$, where $\nu$ is the unit inner normal of $\p\Omega$ at $z$, and the constants  $\alpha, C$ depend only on $f_0, \beta$ and $\|\varphi\|_{C^{4,\beta}(\p\Omega)}$, but independent of $z$.
\end{lemma}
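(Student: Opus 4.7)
The plan is to parallel the barrier argument of Lemma \ref{lemma5.3}, now replacing the second boundary point by the interior point $(0,h)$ in the $y$-coordinates furnished by the shear $\mathcal{T}$ of \eqref{y=tx} (under $\mathcal{T}$, $z \mapsto 0_y$ and $z + h\nu$ goes to $(0,h)$ modulo an $O(|z_1|^3 h)$ tangential error controlled by Lemma \ref{lemma5.2}). First I would normalize by subtracting the affine tangent plane of $u$ at $z$, whose existence follows from Corollary \ref{lemma5.1}, so that $u(0)=0$, $Du(0)=0$ and $u\ge 0$. Then $u_{y_2}(0)=0$, and convexity along the $y_2$-axis already gives $u_{y_2}(0,h)\ge 0$, so only the upper bound $u_{y_2}(0,h)\le Ch^\alpha$ remains. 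The decisive reduction is the convexity inequality
\[
u_{y_2}(0,h) \le \frac{u(0,2h)-u(0,h)}{h} \le \frac{u(0,2h)}{h},
\]
which converts the derivative estimate into an upper bound on $u(0,2h)$.

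Next I would analyse the normalized boundary data. By the Taylor expansion \eqref{taylor-1} combined with \eqref{var-2-3}, on $\partial\tilde\Omega$ near the origin
\[
u(y_1,\tilde\rho(y_1)) = \bigl(C_0 - u_\nu(z)\bigr)\tilde\rho(y_1) + O\bigl(|z_1|^\beta + |y_1|^\beta\bigr)\tilde\rho(y_1), \qquad C_0 := \tfrac{1}{24}\tilde\varphi^{(4)}(0).
\]
Testing against the linear sub-solution $\ell_c(y)=cy_2$ and invoking the maximality of the support plane together with the comparison principle force the identification $u_\nu(z)=C_0$, so the linear-in-$y_2$ contribution to the boundary data is cancelled and $0\le u(y_1,\tilde\rho(y_1))\le C(|z_1|^\beta + |y_1|^\beta)\tilde\rho(y_1)$. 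With this in hand, I would introduce the upper barrier
\[
\bar v(y) = M y_2^{1+\beta/4} + N y_2 \quad\text{on}\quad \tilde D_H = \tilde\Omega \cap \{y_2<H\}.
\]
Since $\bar v$ depends only on $y_2$, it is convex and satisfies $\det D^2\bar v = 0 \le f$. Splitting $\partial\tilde\Omega \cap \{y_2<H\}$ into the two regimes $|z_1|>h^\gamma$ and $|z_1|\le h^\gamma$ as in Cases I and II of Lemma \ref{lemma5.3}, and invoking the comparability \eqref{y2-r3} between $\tilde\rho(y_1)$ and $z_1^2 y_1^2 + y_1^4$, one can choose $M$ uniformly bounded, $N = C(|z_1|^\beta + h^{\alpha'})$ for a suitable exponent $\alpha'<\beta/4$, and $H$ small so that $\bar v\ge u$ on all of $\partial\tilde D_H$. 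The comparison principle then yields $u(0,2h) \le \bar v(0,2h) = M(2h)^{1+\beta/4} + 2Nh$, and plugging this into the convexity inequality above produces
\[
u_{y_2}(0,h) \le 2^{1+\beta/4}\, M\, h^{\beta/4} + 2N.
\]
Optimising $\gamma$ to balance $|z_1|^\beta$ against a positive power of $h$ gives $u_{y_2}(0,h)\le Ch^\alpha$ for some $\alpha\in(0,\beta/4)$ independent of $z$, and transporting back to $x$-coordinates (absorbing the tangential error via Lemma \ref{lemma5.2}) concludes the proof.

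The principal obstacle is the identification $u_\nu(z)=C_0$: without it, the normalized boundary data would retain a nontrivial linear-in-$y_2$ term of size $|C_0-u_\nu(z)|\,y_2$, and the barrier estimate above would yield only $u_{y_2}(0,h)\le C + Ch^{\beta/4}$ instead of the desired decay. Establishing this identification requires a matching pair of affine sub/super-barriers, one forcing $u_\nu(z)\ge C_0-\varepsilon$ for every $\varepsilon>0$ and the other forcing $u_\nu(z)\le C_0$; this step parallels the normal-derivative control in Lemma \ref{lemma5.3} and must be executed carefully so that the constants remain uniform in $z\in\partial\Omega$ near the degenerate point.
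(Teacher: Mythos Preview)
Your reduction to bounding $u(0,2h)$ via convexity and the boundary Taylor expansion $(C_0 - u_\nu(z))\tilde\rho(y_1) + O(|z_1|^\beta + |y_1|^\beta)\tilde\rho(y_1)$ are correct, but the identification $u_\nu(z)=C_0$ on which your upper barrier rests is simply false. Take $\varphi\equiv 0$ and $f\equiv 1$: then $\tilde\varphi^{(4)}(0)=0$, so $C_0=0$, yet the convex solution has $u<0$ in $\Omega$ and hence $u_\nu(z)<0$ at every boundary point, including the degenerate point where your error term vanishes. Your proposed test with $\ell_c(y)=cy_2$ cannot force $u_\nu(z)\ge C_0-\varepsilon$: to run comparison in $\tilde D_H$ you would need $\ell_c\le u$ on the cap $\{y_2=H\}$, and nothing provides this. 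Without the identification, the normalized boundary data retains a term $\frac{1}{24}\bar a_4\,y_2$ with $\bar a_4 := 24(C_0-u_\nu(z))$ possibly of order one, and $\bar v=My_2^{1+\beta/4}+Ny_2$ with $N=O(h^{\alpha'})$ cannot dominate it.

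The paper avoids any such identification. After the same normalization it runs, for each fixed $h$, a dichotomy on the leading coefficient of $\bar\varphi:=u|_{\p\tilde\Omega}$ (namely $\bar a_2=\bar\varphi''(0)$ when $|z_1|>h^\gamma$, and $\bar a_4=\bar\varphi^{(4)}(0)$ when $|z_1|\le h^\gamma$). If the coefficient is $\le Ch^{\alpha}$, convexity along the chord joining the two points of $\p\tilde\Omega\cap\{y_2=h\}$ bounds $u(he_2)$ directly by the boundary values. If the coefficient exceeds $Ch^\alpha$, one argues by contradiction: assuming $u(he_2)\ge h^{1+\alpha}$, Lemma~\ref{lemma5.3} gives $u(y_1,h)\ge \bar\varphi(y_1)-C|y_1|^{\alpha_0}h$ on the outer part of $\{y_2=h\}$, while the contradiction hypothesis plus a tangential gradient bound handles the inner part; together with the lower bound on $\p\tilde\Omega$ one then compares $u-\tfrac12 h^{c\alpha}y_2$ with the \emph{sub}-barrier $w$ of \eqref{w-un} to force $\p_{y_2}u(0)>0$, contradicting $Du(0)=0$. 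Thus Lemma~\ref{lemma5.3} is used not to pin down $u_\nu(z)$ but to transfer boundary growth onto the cap $\{y_2=h\}$ in the contradiction step.
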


\begin{proof}
At $z\in\p\Omega$, we apply the coordinate change $y=\mathcal{T}(x)$ in \eqref{y=tx}, which positions $\mathcal{T}(z)=0$ at the origin in the $y$-coordinate. Locally, $\tilde\Omega=\mathcal{T}(\Omega)\subset\{y_2>0\}$ satisfies \eqref{y2-r0}--\eqref{y2-r3}, and the boundary function $\tilde\varphi$ satisfies \eqref{var2-2}--\eqref{g-v-up}. 

By subtracting a linear function, we can assume that $\{y_{3} = 0\}$ is the support plane of $u$ at $0$ such that $u(0)=0$ and $Du(0)=0$.
It suffices to prove
	\begin{equation}\label{u-y2-1}
		u (h e_2) \le C h^{1+\alpha} 
	\end{equation}
for small $h>0$.
The proof is divided into two cases based on the geometry of $\partial\Omega$ at $z$: Case I, when $z$ is far from the degenerate point (locally $\partial\Omega$ is more like a quadratic curve); and Case II, when $z$ is close to the degenerate point (locally $\partial\Omega$ is more like a quartic curve).  

\noindent\textbf{Case I:}
Assume $|z_1| > h^\gamma$ for some small $\gamma>0$ to be determined. 
Let $\tilde D_h=\tilde\Omega\cap\{y_2<h\}$ and $\bar\varphi:=u|_{\p\tilde\Omega}$.
By Taylor's expansion, we have
	\begin{equation}\label{5.40-1} 
		\bar\varphi(y) = \frac12 \bar\varphi''(0)y_1^2 + O(|y_1|^{3}) \quad\text{ on $\p \tilde D_h \cap \p\tilde\Omega$}.
	\end{equation}
Note that $\bar a_2 := \bar\varphi''(0)  \ge 0$ due to the convexity of $u$.

 \emph{Case $I_i$:}
Assume $\bar a_2 \le 2 h^{\alpha+2\gamma}$ for some $\alpha>0$.
For small $h>0$, let $t^+>0$ and $t^-<0$ be such that $\tilde \rho(t^+ ) = \tilde \rho(t^- )=h$.
Then, $(t^\pm, h) \in \p \tilde D_h \cap \p\tilde\Omega$.
By \eqref{y2-r3}, similar to \eqref{y2-r4} we have
	\begin{equation}\label{t-h-2}
		h^{1/2} \leq |t^\pm| \le 2 h^{1/2-\gamma},
	\end{equation}
provided $\delta_1$ and $h$ are sufficiently small. 
By \eqref{5.40-1} and the convexity of $u$, we obtain
	\begin{equation*}\label{c111}
		\begin{split}
		u(h e_2)  & \le  \max\{\bar\varphi(t^+), \bar\varphi(t^-)\}   \le \frac12 \bar a_2 |t^\pm|^2 + C |t^\pm|^3  \\
			&  \le 4 h^{1+\alpha} + C h^{3/2-3\gamma} \le C h^{1+\alpha}
		\end{split}
	\end{equation*}
provided $\alpha < \frac12- 3\gamma$.

 \emph{Case $I_{ii}$:}
Assume $\bar a_2>2h^{\alpha+2\gamma}$. To distinguish $\alpha$ here from the one in Lemma \ref{lemma5.3}, we denote the $\alpha$ in \eqref{bdnm1} by $\alpha_0$. 
Suppose, contrary to \eqref{u-y2-1}, that there exists a small $h > 0$ such that $u(h e_2) \ge h^{1+\alpha}$.
We will derive a contradiction using a modified barrier function $w$ from \eqref{w-un}.

On $\p\tilde D_h\cap\{y_2<h\}$, we have $y_2\leq |y_1|^2 \leq 4|z_1|^{-2}y_2 < 4h^{-2\gamma}y_2$ by \eqref{y2-r4}.
Then, using \eqref{5.40-1} and the assumption $\bar a_2>2h^{\alpha+2\gamma}$, we get
	\begin{equation}\label{bdest11}
	\begin{split}
		u(y) = \bar\varphi(y) &\ge \frac{1}{2}\bar a_2 |y_1|^2 - C|y_1|^{3} \\
			&\ge h^{\alpha+2\gamma} |y_1|^2 - C h^{-3\gamma} y_2^{\frac32} \\  
			&\ge h^{\alpha+2\gamma} y_2 - Ch^{\frac12-3\gamma}y_2  \ge \frac{1}2 h^{\alpha+2\gamma} y_2,
	\end{split}
	\end{equation}
provided $\alpha < \frac12 - 5\gamma$.

To estimate $u$ on $\p\tilde D_h\cap\{y_2=h\}$, by Lemma \ref{lemma5.3}, we have
	\begin{equation}\label{le5.3-}
		u_{2}(y_1,\tilde\rho(y_1)) \geq -C |y_1|^{\alpha_0}.
	\end{equation} 
If $y\in\p\tilde D_h\cap\{y_2=h\}$ and $|y_1| > h^{\frac12 + 2\alpha+2\gamma}$, then by \eqref{5.40-1} and \eqref{le5.3-}, we get 
	\begin{equation}\label{5.1--}
	\begin{split}
		u(y) &\ge u(y_1, \tilde\rho(y_1))  -C |y_1|^{\alpha_0} h \\
		     &\ge \frac{1}{2}\bar a_2 |y_1|^2 - C|y_1|^{3} -C |y_1|^{\alpha_0} h \\
			 &\ge h^{1+5\alpha+6\gamma} - Ch^{3(\frac12-\gamma)} - C  h^{1+ \alpha_0(\frac12-\gamma)} \ge \frac12 h^{1+5\alpha+6\gamma},
	\end{split}
	\end{equation}
provided $5\alpha+9\gamma < \frac12$ and $5\alpha+6\gamma < \alpha_0(\frac12-\gamma)$. 
 
If $y \in \p \tilde D_h \cap \{y_2 = h\}$ and $|y_1| \le h^{\frac12 + 2\alpha+2\gamma}$, then by the convexity of $u$, \eqref{5.40-1}, and \eqref{t-h-2}, we obtain
	$$
		|\p_{y_1} u(y)| \le \frac{\sup_{\tilde D_h} u}{h^{1/2} - h^{\frac12 + 2\alpha+2\gamma} }   \le  \frac2{h^{1/2}} \Big( \frac12 \bar a_2 h^{1-2\gamma} + C h^{\frac32-3\gamma} \Big)
\le C h^{\frac12-2\gamma}.
	$$
Thus, by the assumption $u(he_2)\geq h^{1+\alpha}$, we have
	\begin{equation}\label{5.2--}
		u(y_1,h) \ge u(he_2) - |\p_{y_1} u | |y_1| \ge h^{1+\alpha}- C  h^{1+ 2\alpha} \ge \frac12 h^{1+\alpha}.
	\end{equation}
Therefore, from \eqref{5.1--} and \eqref{5.2--}, it follows that
	\begin{equation}\label{5.3--}
		u(y) \ge \frac12 h^{1+5\alpha+6\gamma} = \frac12 h^{5\alpha+6\gamma} y_2 \quad\text{ on }\ \p \tilde D_h \cap \{y_2 =h\}.
	\end{equation}

Combining \eqref{bdest11} and \eqref{5.3--}, we conclude that
	\begin{equation*}
		u(y) \geq \frac12 h^{5\alpha+6\gamma} y_2   \quad\text{ on }\  \p \tilde D_h,
	\end{equation*}
provided $5\alpha+9\gamma < \frac12$ and $5\alpha+6\gamma < \alpha_0(\frac12-\gamma)$.
This is achievable, for instance, by choosing $\gamma\leq \frac{1}{20}\alpha_0$ and $\alpha\leq \frac{1}{50}\alpha_0$. 

Let $w$ be the function in \eqref{w-un} with $B=\frac{2q}{q-1}h^{q-1}$ and $q=1+ \frac{\alpha_0}2$. From \eqref{wB1} and \eqref{wB2}, we see that $w\leq 0$ on $\partial\tilde D_h$, provided $h>0$ is small.  
By \eqref{compMA} and applying the comparison principle to $w$ and $u- \frac12 h^{5\alpha+6\gamma} y_2$ in $\tilde D_h$, since $5\alpha+6\gamma < \frac{\alpha_0}2 = q-1$, we obtain
	\begin{equation*}
		\partial_{y_2}u(0)  \ge  \frac12 h^{5\alpha+6\gamma} - \frac{2q}{q-1}h^{q-1} >0,
	\end{equation*}
if $h>0$ is small.  
This contradicts the assumption that $Du(0)=0$.

\vskip5pt
\noindent\textbf{Case II:}
Suppose $|z_1| \le h^\gamma$ for the small $\gamma>0$ determined in {Case I}. 
Recall that $u=\bar\varphi$ on $\p\tilde\Omega$ with $\bar\varphi$ as in \eqref{5.40-1}, and $\bar a_2 = \bar\varphi''(0) \ge 0$.
Let $\bar a_4 := \bar\varphi^{(4)}(0)$.
It can be verified by calculation that $\bar\varphi''(-z_1)=\bar\varphi'''(-z_1)=0$. 
Then, similarly to \eqref{var-2-3} and \eqref{taylor-1}, we obtain
	\begin{equation}\label{barphi}
		\bar\varphi(t, \tilde\rho(t) ) = \frac{1}{24} \bar a_4 \tilde\rho(t) + O \big(|z_1|^{\beta} + |t|^{\beta}\big) \tilde\rho(t) \quad\text{ for small $|t|$. }
	\end{equation}

 \emph{Case $I\!I_{i}$:} Suppose $|\bar a_4| \le  24  h^\alpha$.
Then by \eqref{barphi}, we have
	\begin{equation}\label{6.38}
		\bar\varphi(t, \tilde\rho(t) ) \le C (h^\alpha + |z_1|^\beta + |t|^\beta) \tilde\rho(t).
	\end{equation} 
As in Case $I_i$, assume $(t^\pm, h)\in \p\tilde\Omega$.
By \eqref{y2-r3}, we obtain
	\begin{equation}\label{t-h-1}
		\frac14 h^{1/2-\gamma} \le |t^\pm| \le \sqrt{2} h^{1/4},
	\end{equation} 
where the second inequality follows directly from \eqref{y2-r3}, while for the first inequality, we used $|t^\pm| \leq \sqrt{2} h^{1/4} \leq \sqrt{2} h^\gamma$ since $\gamma<1/4$ is small.
	
By the convexity of $u$ and \eqref{6.38}, we get
	\begin{equation*}
	\begin{split}
		u(h e_2)  &\le  \max\{\bar\varphi(t^+), \bar\varphi(t^-)\} \\
			& \le C (h^{\alpha} + h^{\gamma\beta}+h^{\beta/4})h \le C h^{1+\alpha},
	\end{split}
	\end{equation*}
provided $\alpha < \gamma\beta$.

 \emph{Case $I\!I_{ii}$:}
Now suppose $|\bar a_4|>  24  h^\alpha$.
Since $\bar a_2\geq0$, we conclude $\bar a_4 > 24  h^\alpha$ by \eqref{var-2-3}.
In fact, by \eqref{var-2-3} and $\bar a_2\geq0$, we get $\bar a_4 > -C|z_1|^\beta$. Since $|z_1|\leq h^\gamma$, we then have $\bar a_4 > -Ch^{\gamma\beta}$. If $\bar a_4<-24h^\alpha$, it would lead to contradiction when $h>0$ is sufficiently small, provided $\alpha<\gamma\beta$. Hence, we obtain $\bar a_4 > 24  h^\alpha$.
Similarly to Case $I_{ii}$, we suppose to the contrary that there exists a small $h > 0$ such that $u(h e_2) \ge h^{1+\alpha}$.

On $\p \tilde D_h \cap \{y_2 <h\}$, by \eqref{barphi} and \eqref{t-h-1}, we have
	\begin{equation}\label{u22p}
	\begin{split}
		u(y) = \bar \varphi(y) &\ge \frac1{24} \bar a_4 y_2 - C( |z_1|^\beta + |y_1|^\beta) y_2 \\
			&\ge h^{\alpha} y_2 - C(h^{\gamma\beta} + h^{\beta/4}) y_2 \\
			&\ge \frac12 h^\alpha y_2,
	\end{split}
	\end{equation}
provided $\alpha < \gamma\beta$.

Next, we estimate $u$ on $\partial\tilde D_h\cap\{y_2=h\}$.
If $y \in \partial\tilde D_h\cap\{y_2=h\}$ with $|y_1|$ sufficiently large such that $\tilde\rho(y_1) > h^{1+ 8\alpha}$, we find, similarly to \eqref{5.1--}, using \eqref{barphi}, \eqref{t-h-1} and Lemma \ref{lemma5.3}, that  
	\begin{equation}\label{uII1}
	\begin{split}	
		u(y) & \geq \bar\varphi(y_1,\tilde\rho(y_1)) - C|y_1|^{\alpha_0} h \\
		     & \geq \frac{1}{24} \bar a_4 \tilde\rho(y_1) -C \big(|z_1|^{\beta} + |y_1|^{\beta}\big) \tilde\rho(y_1) - C  h^{1+ \alpha_0/4} \\
			&\geq h^{1+9\alpha} - C (h^{\gamma\beta} + h^{\beta/4}) h - C  h^{1+ \alpha_0/4} \ge \frac12 h^{1+9\alpha},
	\end{split}
	\end{equation}
where we require $9\alpha < \min\{\gamma\beta, \alpha_0/4\}$.   

If $y \in \p \tilde D_h \cap \{y_2 = h\}$ with $\tilde\rho(y_1) \le h^{1+ 8\alpha}$, 
by \eqref{y2-r3}, we have 
	\begin{equation*}
		y_1^2 \leq \frac{-z_1^2+\sqrt{z_1^4+16h^{1+8\alpha}}}{2} = \frac{8h^{1+8\alpha}}{z_1^2+\sqrt{z_1^4+16h^{1+8\alpha}}}.
	\end{equation*}
Denote $t_m = \min \{|t^+|, |t^-|\}$. Again by \eqref{y2-r3}, we get
	\begin{equation*}
		t_m^2 \geq \frac{-z_1^2+\sqrt{z_1^4+h/3}}{2}= \frac{h/6}{z_1^2+\sqrt{z_1^4+h/3}}. 
	\end{equation*}
Hence,
	\begin{equation} \label{y1tm}
		y_1^2 \le \frac{8h^{1+4\alpha}}{z_1^2h^{-4\alpha}+\sqrt{z_1^4h^{-8\alpha}+16h}} \leq 48 h^{4\alpha} t_m^2,
	\end{equation}
which implies that $|y_1| \leq 7 h^{2\alpha} t_m$.  
 
By the convexity of $u$, \eqref{barphi}, and \eqref{t-h-1}, we have
	\begin{equation*} 
		|\p_{y_1} u(y)| \le \frac{\sup_{\tilde D_h} u}{t_m- |y_1|}  \le \frac{Ch}{t_m} 
	\end{equation*}
for $y \in \p \tilde D_h \cap \{y_2 = h\}$ with $\tilde\rho(y_1) \le h^{1+ 8\alpha}$.	
Hence by \eqref{y1tm} and the contradiction assumption $u(he_2)\geq h^{1+\alpha}$, we obtain
	\begin{equation}\label{5.2-}
		u(y_1,h) \ge u(he_2) - |\p_{y_1} u | |y_1| \ge h^{1+\alpha}- \frac{C h }{t_m}  h^{2\alpha}t_m \ge \frac12 h^{1+\alpha}.
	\end{equation}
Therefore, it follows from \eqref{uII1} and \eqref{5.2-} that
	\begin{equation}\label{5.3-}
		u(y) \ge \frac12 h^{1+9\alpha} = \frac12 h^{9\alpha} y_2 \quad\text{ on }\ \p \tilde D_h \cap \{y_2 =h\}.
	\end{equation}

Combining \eqref{u22p} and \eqref{5.3-}, we conclude
	\begin{equation*}
		u(y) \geq \frac12 h^{9\alpha} y_2   \quad\text {on }\  \p \tilde D_h,
	\end{equation*}
provided $\alpha < \frac19\min\{\gamma\beta, \alpha_0/4\}$. 
Choosing the auxiliary function $w$ as in Case $I_{ii}$ 
with $q=1+ \frac{\alpha_0}4$,  
and applying the comparison principle to $w$ and $u-\frac12h^{9\alpha} y_2$ in $\tilde D_h$, we obtain 
	\begin{equation*}
		\partial_{y_2}u(0)  \ge  \frac12 h^{9\alpha} - \frac{2q}{q-1} h^{q-1} > 0
	\end{equation*}
for sufficiently small $h>0$.
This contradicts the assumption that $Du(0)=0$.
Therefore, this lemma is proved.
\end{proof}

\begin{proof}[Proof of Theorem \ref{thm5.1}]
By combining Lemmas \ref{lemma5.2} through \ref{lemma5.4}, we establish \eqref{bdryC1a}.
\end{proof}

\vskip5pt
\subsection{Global $C^{1,\alpha}$ estimate}

Next, we prove, for any $y, w \in \overline\Omega$ close to the boundary $\p\Omega$,
	\begin{equation}\label{e-5.5}
		|Du(y)-Du(w)| \leq C|y-w|^\alpha
	\end{equation}
for some $\alpha\in(0,1)$, where $C>0$ is a constant independent of $y, w\in\overline\Omega$. 
Note that we may assume that $y, w \in B_r(0)\cap\overline\Omega$ are close to the origin for a small $r>0$;   
otherwise, the boundary $\p\Omega$ would be uniformly convex in the vicinity. 

Let $z=(y_1,\rho(y_1))\in\p\Omega$ be the projection of $y$ to $\p\Omega$. 
Applying the coordinates change $\mathcal{T}$ from \eqref{y=tx} and \eqref{y=tx-1}, we assume that $y$ lies on the $e_2$-axis and the boundary $\p\tilde\Omega$ is defined by $\tilde\rho$ as in \eqref{y2-r0}, satisfying \eqref{y2-r3}.
(For simplicity, we continue to use $u$, $y$, and $w$ after this transformation.) 

Let $w_0=(w_1, \tilde\rho(w_1))\in\p\tilde\Omega$ be the projection of $w$ to $\p\tilde\Omega$.
Then,
	\begin{equation*}
		|Du(y)- Du(w)| \le |Du(y)- Du(0)| + |Du(0)- Du(w_0)| + |Du(w_0)- Du(w)|.
	\end{equation*}
Using Theorem \ref{thm5.1}, we get 
	\begin{equation*}
	\begin{split}
		|Du(y)- Du(w)| &\le C \max\{d_y^\alpha, |w_0|^\alpha, d_w^\alpha\} \\
				& \le C \max\{d_y^\alpha, |y-w|^\alpha, d_w^\alpha\}.
	\end{split}
	\end{equation*}
If $|y-w| \ge \max\{d_y^4,\, d_w^4\}$, then $|Du(y)- Du(w)| \le C  |y-w|^{\alpha/4}$, which implies that \eqref{e-5.5} holds for a different $\alpha$.
Therefore, we assume in the following that
	\begin{equation}\label{rz-}
		|y-w| < \max\{d_y^4,\, d_w^4\}.
	\end{equation}
Note that \eqref{rz-} implies that 
$|d_y - d_w| \le \max\{d_y^4,\, d_w^4\}$, meaning that $y, w$ are \emph{``relatively"} interior comparing to their distances from the boundary.  

\begin{lemma}\label{lem-ghg}
Under the hypotheses of Theorem \ref{thm5.1} and the assumption \eqref{rz-},
the estimate \eqref{e-5.5} holds for some positive constants
$\alpha$ and $C$ depending only on $C_b, f_0, \beta,\Omega$ and $\|\varphi\|_{C^{4,\beta}(\p\Omega)}$.
\end{lemma}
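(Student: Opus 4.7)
The plan is to prove \eqref{e-5.5} by reducing to the interior $C^{1,\alpha}$ estimate of Theorem \ref{lem-3.2} via an affine rescaling of a Monge-Amp\`ere section centered at $y$. Without loss of generality, assume $d_y \ge d_w$; then \eqref{rz-} together with the smallness of $d_y$ yields $d_w \ge d_y - d_y^4 \approx d_y =: d$, so both points lie at comparable distance $d$ from $\p\Omega$, while $|y-w| < d^4 \ll d$. After the coordinate change $\mathcal{T}$ from \eqref{y=tx} placing the projection of $y$ at the origin and $y$ on the $e_2$-axis, both $y$ and $w$ sit well inside a common convex neighborhood of $y$ in $\tilde\Omega$.

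The key step is to choose a Monge-Amp\`ere section $S = S_h(y)$ of $u$ at $y$ with height $h$ comparable to an appropriate power of $d$, so that $S \Subset \Omega$, $w \in S$, and $S$ admits an affine normalization $T : S \to S^*$ with $B_1 \subset S^* \subset B_n$. The shape of $S$ must track the local shape of $\p\Omega$ near the projection $z$ of $y$: it is ellipse-type, adapted to the quadratic curvature, when $|z_1| \ge d^{1/4}$, and more elongated, adapted to the quartic degeneracy, when $|z_1| < d^{1/4}$. The existence and controlled eccentricity of such $S$ follow from the doubling condition \eqref{dc}, the decay bound \eqref{daes}, the balance estimate \eqref{sigma}, and the boundary $C^{1,\alpha}$ bound of Theorem \ref{thm5.1}, combined with the geometry \eqref{y2-r3}.

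Setting $\tilde u(\xi) := h^{-1}\bigl[u(T^{-1}\xi) - \ell_y(T^{-1}\xi)\bigr]$ on $S^*$, where $\ell_y$ denotes the supporting affine function of $u$ at $y$, produces a convex solution of a Monge-Amp\`ere equation whose right-hand side is bounded and remains in the doubling class with uniform constants, since affine rescaling preserves these properties. By \eqref{rz-}, the images $\tilde y = T(y)$ and $\tilde w = T(w)$ are both interior to $S^*$ with $|\tilde y - \tilde w|$ controlled by $\|T\|\,|y-w|$. Applying Theorem \ref{lem-3.2} to $\tilde u$ on a compactly contained subdomain of $S^*$ gives
\[
|D\tilde u(\tilde y) - D\tilde u(\tilde w)| \le C\,|\tilde y - \tilde w|^{\alpha_0}
\]
for a universal $\alpha_0 > 0$, and inverting the affine change of variables recovers \eqref{e-5.5} with some $\alpha \in (0,\alpha_0)$.

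The main obstacle will be ensuring that the scaling factors from $T$ combine into a $d$-independent constant and H\"older exponent. The matrix $T$ is highly anisotropic near the degenerate boundary, with singular values depending separately on $d$ and on $|z_1|$, so a careless choice of the height $h$ would transfer back to an unwanted factor $d^{-\gamma}$ on the right of \eqref{e-5.5}. The correct balance is dictated by matching $\tilde\rho(y_1) \approx y_2$ on $\p S$ against the decay bound \eqref{daes}; once this is achieved, the hypothesis \eqref{rz-} is precisely what absorbs the anisotropic scaling and yields a uniform H\"older exponent, consistent with the one produced in Theorem \ref{thm5.1}.
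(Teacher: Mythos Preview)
Your approach has a genuine gap: you assume you can choose a Monge--Amp\`ere section $S_h(y)\Subset\Omega$ at height $h$ comparable to a power of $d$, but no such section need exist. Since $f$ is only assumed to satisfy $0\le f\le f_0$ and the doubling condition, the solution $u$ may fail to be strictly convex at $y$: the contact set $\mathcal{C}_y=\{u=\ell_y\}$ can be a nontrivial segment whose extreme points lie on $\p\Omega$, in which case $h_0=\sup\{h:S_h\Subset\Omega\}=0$ and there is no compactly contained section at all. Even when $u$ is strictly convex at $y$, there is no a priori lower bound on $h_0$ in terms of $d$; in particular $h_0$ may be far smaller than any fixed positive power of $d$, so your chosen $h$ could exceed $h_0$. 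Your claim that ``existence and controlled eccentricity of such $S$ follow from'' the listed ingredients is not justified and is in fact false in general for degenerate $f$.

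The paper's proof addresses exactly this difficulty. It splits into Case~1 ($u$ not strictly convex at $y$) and Case~2 ($h_0>0$). In Case~1 the contact set reaches $\p\Omega$ and the boundary $C^{1,\alpha}$ estimate of Theorem~\ref{thm5.1} transfers directly. In Case~2 there is a further split: if $h_0>|y-w|^{1-\varepsilon}$ the interior decay estimate \eqref{inde} suffices; if $h_0\le|y-w|^{1-\varepsilon}$, the paper builds a \emph{cone} $\Sigma$ with vertex at $p^*\in\p S_{h_0}$ over a patch of $\p\tilde\Omega$ near $p\in\p\tilde\Omega\cap\p S_{h_0}$, defines a barrier $v$ on $\Sigma$ from the boundary values, and uses $u\le v$. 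The crucial geometric input is the angle lower bound $\theta_{\tau,e_p}\ge c\,|y|^{3/4}$ (claim \eqref{lbtheta}), which quantifies how the degenerate boundary curvature enters and is what ultimately absorbs the anisotropy you allude to. Your proposal contains no analogue of this construction, and without it the argument cannot close when $h_0$ is small or zero.
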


\begin{proof}
Subtracting a linear function, we assume $\{y_3=0\}$ is the support plane of $u$ at $y$, with $u(y)=0$ and $u\geq0$.
To prove \eqref{e-5.5}, by the convexity of $u$ it suffices to prove
	\begin{equation}\label{sr-z}
		u(x) \le C |x-y|^{1+\alpha} \quad \text{ for any } x \in B_{2|y-w|}(y),
	\end{equation}
where $B_r(y)$ denotes the ball in $\mathbb R^2$ of radius $r$, centred at $y$.

Let $S_h = \{x\in \tilde\Omega~|~ u(x) < h\}$ be the sub-level set of $u$ at $y$.

\noindent{\bf Case 1:} $u$ is not strictly convex at $y$, namely there is no $h > 0$ such that $S_h \Subset \tilde\Omega$.

In this case, all extreme points of the contact set $\mathcal C_0 := \{x \in \overline{\tilde\Omega} ~|~ u(x) = 0\}$ 
must be on the boundary $\p\tilde\Omega$ due to the doubling condition \eqref{dc}, (see \cite{C-1991}).
Let $y=\sum_{i=1}^m a_ip_i$, where $a_i>0$, $\sum a_i=1$, and $p_i\in\p\tilde\Omega$ are extreme points of $\mathcal{C}_0$. 
By Theorem \ref{thm5.1}, we have $[u]_{C^{1,\alpha}\{p_i\}} \le  C$ for all $i=1,\cdots,m\leq 3$.
For a point $p\in \overline{\tilde\Omega}$, we denote
	\begin{equation*}
		[u]_{C^{1,\alpha}\{p\}} = \inf \big\{ C \ | \ u(x)-\ell_p(x) \leq C|x-p|^{1+\alpha} \ \ \text{for any }\,x\in\tilde\Omega\big\},
	\end{equation*}
where $\ell_p$ is the support plane of $u$ at $p$. See \cite{GT} for other equivalent notations. 
Consequently, by the convexity of $u$, we have $[u]_{C^{1,\alpha}\{y\}} \le  C$, thus proving \eqref{sr-z}.

\noindent{\bf Case 2:} $u$ is strictly convex at $y$, namely $h_0:=\sup \{h~|~ S_h \Subset \tilde\Omega \}>0$.

By the gradient estimate \eqref{C1bound} and the convexity of $u$, we have
	\begin{equation*}
		\dist(y, \partial S_{h_0}) \geq \frac{h_0}{\sup |Du| } \geq ch_0.
	\end{equation*}
If $h_0 > |y-w|^{1-\vep}$ for a small constant $\vep>0$, then $B_{2|y-w|}(y)\Subset S_{h_0}$ since $|y-w|$ is sufficiently small. 
By \eqref{inde}, we can obtain \eqref{sr-z} that for $x\in B_{2|y-w|}(y)$,
	\begin{equation*}
	\begin{split}
		0\le u(x) &\le \frac{2 h_0}{ \big(\text{dist}(y,\p S_{h_0})\big)^{1+\alpha}} |x-y|^{1+\alpha}  \\
			& \le \frac{C}{ h_0^\alpha } |x-y|^{1+\alpha}  \le C |x-y|^{1+\alpha\vep},
	\end{split}
	\end{equation*}
where the last inequality is due to the assumption $h_0>|y-w|^{1-\vep}>(|x-y|/2)^{1-\vep}$.  

Assume now that $h_0 \le |y-w|^{1-\vep}$.
Let $p \in \p \tilde\Omega \cap \p S_{h_0}$, and $p^*\in\p S_{h_0}$ such that $y \in \overline{pp^*}$, where $\overline{pp^*}$ is the line segment connecting $p$ and $p^*$.
Define $e_p := \frac{p-y}{|p-y|}$.
Consider the following convex cone $\Sigma$, introduced in \cite{CTW-2022}, defined as
	\begin{equation}\label{coner}
		\Sigma = \big\{x\in \R^2 ~|~ x = p^* + t(\tilde z - p^*),\ t>0, \ \tilde z \in \p \tilde\Omega\cap B_r(p) \big\},
	\end{equation}
where $r>0$ is a constant. By \eqref{sigma}, we can choose $r=\frac{2}{\sigma}|y-w|^{3/4}$ so that $w$ is contained in the cone $\Sigma$.  

Define a function $v$ by
	\begin{equation*}
		v(x) = u(p^*) + t(u(\tilde z) - u(p^*)) \quad \text{ for }\ x = p^* + t(\tilde z - p^*)\in \Sigma,\ \tilde z \in \p \tilde\Omega.
	\end{equation*}
The graph of $v$ is a two-dimensional cone in $\R^3$ with vertex at $(p^*, u(p^*))$ and $v=\tilde\varphi$ on $\p \tilde\Omega$ near $p$.
By the convexity of $u$, we have
	\begin{equation*}
		u(x) \leq v(x) \quad \text{ for any } x\in\Sigma\cap \tilde\Omega.
	\end{equation*}

 Let $\tau$ be the unit tangential vector of $\p \tilde\Omega$ at $p$ such that $\theta_{\tau,e_p}$, the angle between the vectors $\tau$ and $e_p$, lies in $(0,\pi/2)$.
We claim that there is a lower bound of $\theta_{\tau,e_p}$ that
	\begin{equation}\label{lbtheta}
		\theta_{\tau,e_p} \geq \frac{1}{144}|y|^{\frac34}.
	\end{equation}
Since $p \in \p \tilde\Omega \cap \p S_{h_0}$, by convexity we have $\tilde\varphi(p)=h_0$ and $\tilde\varphi(x) \ge h_0$ for $x\in\p\tilde\Omega$ near $p$. 
Hence by the Taylor expansion, we get
	\begin{equation*}\label{u-p-2}
		v(x) \le h_0 + C |x-p|^2 \quad \text{ for any }  x\in\p \tilde\Omega  \text{~near~} p.
	\end{equation*}
Assuming \eqref{lbtheta} for the moment, we then have
	\begin{equation*}\label{v1est}
		v(x) \leq h_0 + C\left(\frac{|x-p|}{|y|^{3/4}}\right)^2 \quad \text{ for } x\in\Gamma_1, 
	\end{equation*}
where 
	\begin{equation*}
		\Gamma_s := \left\{x \in \Sigma ~\Big|~ \frac{x-p^*}{|p-p^*|}\cdot e_p = s \right\},\quad s\in(0,1].
	\end{equation*}
By the assumption \eqref{rz-} and the choice of $r$ in \eqref{coner}, we get $|y|^{\frac32}>|y-w|^{\frac38}>(\frac{\sigma}{2}|x-p|)^{\frac12}$ for $x\in\Gamma_1$.
Hence, we obtain
	\begin{equation}\label{v-h1}
		v(x) \leq h_0 + C|x-p|^{3/2} \quad\text{ for any } x\in\Gamma_1. 
	\end{equation}
By the definition of $v$, \eqref{sigma} and \eqref{v-h1}, we then have
	\begin{equation}\label{v-h2}
		v(x) \leq h_0 + C \sigma^{-1} |x-y|^{3/2} \quad\text{ for any } x\in\Gamma_{s^*},
	\end{equation}
where $s^* = \frac{|y-p^*|}{|p-p^*|} \ge \frac{\sigma}{1+\sigma}$ such that $y\in \Gamma_{s^*}$.

Since $v\equiv h_0$ on the line segment $\overline{pp^*}$, we obtain 
	\begin{equation}\label{v-h3}
		v(x) \leq h_0 + C |x-y|^{3/2} \quad \text{ for any } x \in B_{4|y-w|^{3/4}}(y)
	\end{equation}
for a different constant $C$ depending on $\sigma$. 
To see \eqref{v-h3}, for any $x\in B_{4|y-w|^{3/4}}(y)$, there exists $s'\in(0,1)$ such that $x\in\Gamma_{s'}$. 
By \eqref{rz-}, $|y-w|^{3/4} \ll d_y^3 \ll |p-p^*|$. Thus, $s'$ is almost equal to $s^*$, i.e. $|s'-s^*|\ll 1$. 
Let $x'=\Gamma_{s'}\cap\overline{pp^*}$. Similar to \eqref{v-h2}, we have $v(x) \leq h_0 + C' |x-x'|^{3/2} \leq h_0 + C' |x-y|^{3/2}$ for a difference constant $C'$. 

Now, we are ready to prove the regularity of $u$ in \eqref{sr-z} by using that of $v$ in \eqref{v-h3}.
For $x\in B_{2|y-w|^{3/4}}(y)$, we consider two cases: (i) $u(x)\geq h_0$ and (ii) $u(x)<h_0$.

In case (i), let $\hat x = y+ 2(x-y)$.
By the convexity of $u$ and applying \eqref{v-h3} at $\hat x$, we get
	\begin{equation*}
		2h_0 \leq 2u(x) \leq u(\hat x) \leq v(\hat x) \leq h_0 + C|x-y|^{3/2},
	\end{equation*}
which yields $h_0\leq C|x-y|^{3/2}$.	
Then, by applying \eqref{v-h3} at $x$, we obtain
	\begin{equation}\label{580}
		u(x) \le v(x) \le h_0 + C|x-y|^{3/2} \leq C|x-y|^{3/2}.
	\end{equation}

In case (ii), there exists a unique $\tilde x=y+\tilde t(x-y)\in\partial S_{h_0}$ for some $\tilde t>1$ such that $u(\tilde x)= h_0$.
If $|\tilde x -y| \le 2|y-w|^{3/4}$, then by \eqref{inde} with $\alpha\leq1/2$, we have
	\begin{equation*}
		u(x) \le |x-y|^{1+\alpha} \frac{2u(\tilde x)}{|\tilde x-y|^{1+\alpha}} \le C |x-y|^{1+\alpha},
	\end{equation*}
where the second inequality follows from \eqref{580}, applied at $\tilde x$.
If $|\tilde x -y| > 2|y-w|^{3/4}$, then by the assumption $h_0\leq |y-w|^{1-\varepsilon}$ and \eqref{inde} we have
	\begin{equation*}
	\begin{split}
		u(x) & \le |x-y|^{1+\alpha} \frac{2h_0}{|\tilde x-y|^{1+\alpha}} \\
			& \le  C |x-y|^{1+\alpha}  |y-w|^{\frac{1-4\varepsilon-3\alpha}{4}} \le C |x-y|^{1+\alpha} 
	\end{split}
	\end{equation*}
for some $\alpha>0$ small such that $1-4\varepsilon-3\alpha\geq0$. 
Therefore, the desired estimate \eqref{sr-z} is proved.

Finally, it remains to prove the claim \eqref{lbtheta}.
We may assume that $y=(0,y_2)$ is on the positive $y_2$-axis and $p=(p_1,p_2)$ satisfies $p_1 > 0$.
Recall that $\tau$ is the unit tangential vector at $p\in\partial \tilde\Omega$ and $e_p=\frac{p-y}{|p-y|}$. 
We shall derive the lower bound of the angle $\theta_{\tau,e_p}$ between $\tau$ and $e_p$. 

If $p_2>y_2$, then $\theta_{\tau,e_p}=\theta_{\tau,e_1}-\theta_{e_1,e_p}$, and 
	\begin{equation}\label{the1}
	\begin{split}
		\theta_{e_1,e_p} &= \arccos(e_1\cdot e_p)  = \arctan \frac{p_2-y_2}{p_1} \\ 
					&\approx \frac{p_2-y_2}{p_1}  \le \frac{p_2}{p_1}. 
	\end{split}
	\end{equation}
By \eqref{y2-r0}, we have $p_2 =\tilde \rho(p_1)= 6 |z_1|^2 p_1^2 + 4 z_1 p_1^3 + p_1^4$, and $\tau = \frac{(1, \tilde\rho'(p_1))}{\sqrt{1+(\tilde \rho'(p_1))^2}}$. 
 Then, 
	\begin{equation}\label{the2}
	\begin{split}
		\theta_{\tau, e_1} &= \arccos(\tau\cdot e_1) = \arctan \tilde\rho'(p_1) \\
				& \approx \tilde\rho'(p_1).
 	\end{split}
	\end{equation}
Hence, from \eqref{y2-r0} and \eqref{y2-r3} we obtain that
	\begin{equation}\label{theta-tp1}
	\begin{split}
		\theta_{\tau, e_p}  &= \theta_{\tau, e_1} - \theta_{e_1,e_p} \ge \tilde \rho'(p_1) - \frac{p_2}{p_1}   \\
 				& \ge \frac1{6p_1} \big( |z_1|^2 p_1^2 + p_1^4 \big) \ge \frac{p_2}{72p_1}  \\
			& \ge \frac{1}{144} p_2^{\frac34} \ge \frac{1}{144} y_2^{\frac34} = \frac{1}{144} |y|^{\frac34},
	\end{split}
	\end{equation}
where the last inequality is due to the assumption $p_2>y_2=|y|$.

If $p_2 \le y_2$, then $\theta_{\tau, e_p}=\theta_{\tau, e_1} + \theta_{e_1,e_p}$.
In this case, \eqref{the2} still holds, while \eqref{the1} becomes 
	\begin{equation*}
		\theta_{e_1,e_p} = \arctan \frac{y_2-p_2}{p_1} \approx \frac{y_2-p_2}{p_1}
	\end{equation*}
if $\frac{y_2-p_2}{p_1} \ll 1$  as $|y|\to 0$.
(Note that if $\frac{y_2-p_2}{p_1} \geq c_0$, then $\theta_{e_1,e_p}\geq\arctan c_0$, which immediately provides the required bound.)
Hence, by \eqref{y2-r0} and \eqref{y2-r3}, we have 
	\begin{equation}\label{theta-tp2}
	\begin{split}
		\theta_{\tau, e_p}  &= \theta_{\tau, e_1} + \theta_{e_1,e_p} \approx \frac{y_2}{p_1} +  \tilde\rho'(p_1) -\frac{p_2}{p_1}  \\
					& \ge \frac{y_2}{p_1} + \frac{p_2}{72p_1} \ge 4 \Big( \big(\frac{y_2}{3p_1} \big)^3 \frac{p_2}{72p_1}\Big)^{1/4} \ge \frac{1}{6} y_2^{\frac34} = \frac{1}{6} |y|^{\frac34}.
	\end{split}
	\end{equation} 
Combining \eqref{theta-tp1} and \eqref{theta-tp2}, we establish the claim \eqref{lbtheta}, completing the proof of Lemma \ref{lem-ghg}.

\end{proof}

\vspace{5pt}
\section{Proof of Theorem \ref{mt-1}}\label{S4}

Assume $0\in\p\Omega$ is an isolated degenerate point, namely $\kappa(0)=0$ while $\kappa>0$ on $\p\Omega$ near the origin, where $\kappa$ is the curvature of $\p\Omega$.
Choosing appropriate coordinates, assume $\Omega \subset \{x_2 >0\}$ and that $\p\Omega$ is locally described as in \eqref{x2-rho-1} with \eqref{x2-rho-2}. 
Suppose $\p\Omega\in C^{k+\beta}$, $\beta\in(0,1]$, is $k$-order degenerate at $0$.
In Section \ref{S3}, we examined the model case $k=4$; here, we consider the general case for $k\geq4$ being an even integer.

\begin{lemma}\label{lem-5.1}
Let $u$ be a convex function on $\overline\Omega$ and $\varphi = u|_{\p\Omega}\in C^{k,\beta}(\p\Omega)$ satisfy $(\mathcal P1)$.
Then, by subtracting any affine function from $u$, the condition $(\mathcal P1)$ remains invariant, implying that 
\begin{itemize}
  \item[$(\mathcal P1')$] $\frac{\p^i \varphi}{\p x_1^i}(0) = 0\ $ for all $i=2,3, \cdots, k-1$.
\end{itemize} 
\end{lemma}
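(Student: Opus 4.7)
The strategy is to exploit the convexity of $u$ together with the $k$-order degeneracy of $\partial\Omega$ at the origin to promote the even-derivative vanishing of $(\mathcal P1)$ to vanishing of all derivatives from order $2$ through $k-1$.

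First I would verify the invariance of $(\mathcal P1)$ under affine subtraction. Writing $\varphi(x_1)=u(x_1,\rho(x_1))$ near $0$, subtracting an affine function $\ell(x)=a_0+a_1x_1+a_2x_2$ gives the boundary trace
\begin{equation*}
\tilde\varphi(x_1)=\varphi(x_1)-a_0-a_1x_1-a_2\rho(x_1).
\end{equation*}
Only $a_2\rho(x_1)$ contributes to derivatives of order $\ge 2$, and by the $k$-order degeneracy $\rho^{(i)}(0)=0$ for $2\le i\le k-1$. Hence $\tilde\varphi^{(i)}(0)=\varphi^{(i)}(0)$ for all $2\le i\le k-1$; in particular the even-order zeros imposed by $(\mathcal P1)$ are preserved.

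Next I would specialise $\ell$ to a supporting affine function $\ell_0$ of $u$ at the origin (which exists by convexity of $u$ on $\overline\Omega$). Setting $\tilde u=u-\ell_0$, we have $\tilde u\ge 0$ in $\overline\Omega$ and $\tilde u(0)=0$, so $\tilde\varphi=\tilde u|_{\partial\Omega}$ satisfies $\tilde\varphi\ge 0$ with $\tilde\varphi(0)=0$, and the origin is a minimum, forcing $\tilde\varphi'(0)=0$. Taylor-expanding $\tilde\varphi\in C^{k,\beta}$ to order $k$ and killing the even-order coefficients up to $k-2$ via $(\mathcal P1)$ (which is legitimate by the invariance established above), we obtain
\begin{equation*}
\tilde\varphi(x_1)=\sum_{\substack{i\ \text{odd}\\ 3\le i\le k-1}}\frac{\tilde\varphi^{(i)}(0)}{i!}\,x_1^{i}\;+\;\frac{\tilde\varphi^{(k)}(0)}{k!}\,x_1^{k}\;+\;O(|x_1|^{k+\beta}).
\end{equation*}

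The main step is a sign argument by minimality. Suppose that $\tilde\varphi^{(j)}(0)\ne 0$ for some odd $j$ in $[3,k-1]$, and let $j$ be the smallest such index. Then the leading behaviour of $\tilde\varphi$ at $0$ is $\tilde\varphi^{(j)}(0)\,x_1^{j}/j!$, since every intermediate even-order coefficient vanishes by $(\mathcal P1)$, every smaller odd-order coefficient vanishes by minimality, and the remaining terms are $O(|x_1|^{j+1})$. But $x_1^{j}$ changes sign at $0$, contradicting $\tilde\varphi\ge 0$. Therefore all odd derivatives $\tilde\varphi^{(3)}(0),\tilde\varphi^{(5)}(0),\dots,\tilde\varphi^{(k-1)}(0)$ vanish, and combining with $\tilde\varphi^{(i)}(0)=\varphi^{(i)}(0)$ for $2\le i\le k-1$ together with $(\mathcal P1)$ yields $(\mathcal P1')$.

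I do not foresee a serious obstacle: the only care needed is the verification that, at the minimal odd index $j\le k-1$, the next possibly nonzero term in the Taylor expansion is of order strictly larger than $j$, so that the odd monomial truly controls the sign of $\tilde\varphi$ near $0$. This reduces to the trivial inequality $j+1\le k$, which is guaranteed since $j\le k-1$. The substantive content is the interplay between convexity (nonnegativity of $\tilde u$), the boundary degeneracy (which makes the affine correction invisible to derivatives up to order $k-1$), and the even/odd parity mismatch that makes odd-order leading behaviour incompatible with nonnegativity.
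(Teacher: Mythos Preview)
Your proposal is correct and follows essentially the same approach as the paper: both arguments rest on (i) the observation that subtracting $a_0+a_1x_1+a_2\rho(x_1)$ does not alter $\varphi^{(i)}(0)$ for $2\le i\le k-1$ because $\rho^{(i)}(0)=0$ in that range, and (ii) the nonnegativity of $\tilde\varphi=\varphi-\ell_0|_{\partial\Omega}$ after subtracting a supporting plane, which rules out any nonzero odd-order leading term. The only difference is cosmetic: you establish the invariance first and then specialise to the support plane, whereas the paper reverses the order and is terser in the sign argument (your explicit choice of the minimal odd index $j$ spells out what the paper leaves as ``easy to see'').
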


\begin{proof}
By subtracting the support function of $u$ at $0$, we may assume that $\varphi(0)=0$, $\varphi'(0)=0$ and $\varphi\geq0$. 
It is easy to see that $(\mathcal P1)$ implies $(\mathcal P1')$, since any non-zero odd-order derivative would contradict the non-negativity of $\varphi$. 

Consider an affine function $\ell=a_0+a_1x_1+a_2x_2$ with constants $a_0,a_1,a_2$. 
Let $\hat u=u-\ell$ and $\hat\varphi=\hat u|_{\p\Omega}$. We verify that $\hat\varphi$ satisfies $(\mathcal P1)$ and $(\mathcal P1')$ as well. 

Since $\p\Omega\in C^{k+\beta}$ is $k$-order degenerate at point $0$, \eqref{k-deg} and \eqref{k+b-deg} yield
	\begin{equation}\label{kbdr}
		x_2=\rho(x_1) = \frac{1}{k!} \rho^{(k)}(0) x_1^k + \bar\rho(x_1),
	\end{equation}
where $\rho^{(k)}(0) \neq 0$ and $\bar\rho(x_1) = O\big(|x_1|^{k+\beta}\big)$. 
Hence,
	\begin{equation*}
		\hat\varphi(x_1) = \varphi(x_1) - \ell|_{\p\Omega} = \varphi(x_1) - \left(a_0+a_1x_1+a_2\Big(\frac{1}{k!} \rho^{(k)}(0) x_1^k + \bar\rho(x_1)\Big)\right).
	\end{equation*}
It is clear that $	\hat \varphi^{(i)}(0) = \varphi^{(i)}(0)$ for all $i= 2, 3, \cdots, k-1.$
Therefore, $\hat\varphi$ satisfies both $(\mathcal P1)$ and $(\mathcal P1')$.
\end{proof}

\begin{remark}\label{re-5.1}
If $\p\Omega\in C^{k+\beta}$, with $k\ge 4$ even and $\beta\in(0,k]$, is $(k,\beta)$-order degenerate at $0$, and $\varphi \in C^{k+\beta}(\p\Omega)$ satisfies $(\mathcal P2)$,
then $(\mathcal P2)$ remains invariant under subtraction of an affine function from $u$.
\end{remark}

The key components of the proof are the local coordinate transformation $\mathcal T$ (analogous to \eqref{y=tx} and \eqref{y=tx-1}) and the properties of boundary data $\tilde\varphi$ in the new coordinates, as established in \eqref{var2-2}--\eqref{g-v-up}. 

\begin{lemma}\label{lem-boundary}
Assume that $\varphi\in C^{k,\beta}(\p\Omega)$ satisfies $(\mathcal P1)$.
For $z\in\p\Omega$ near the origin, there exists a transform $y = \mathcal T_k (x)$ (defined in \eqref{t-k}) such that $\mathcal T_k(z)=0_y$, $\tilde\Omega:= \mathcal T_k(\Omega) \subset \{y_2>0\}$, $\det D\mathcal T_k = 1$, and locally $\p\tilde\Omega = \{y_2 = \tilde\rho(y_1)\}$ with $\tilde\rho(0_y)=\tilde\rho'(0_y)=0$.
Moreover, for all $y\in\p\tilde\Omega$ near $0_y$, 
\begin{itemize}
  \item[$(i)$] $\tilde\rho(y_1) \approx \kappa(z) y_1^2 + y_1^k$,
  \item[$(ii)$] $y_1\tilde\rho'(y_1) \approx \kappa(z) y_1^2 + y_1^k$,
  \item[$(iii)$] $y_1\tilde\rho'(y_1) - \tilde\rho(y_1)  \approx \kappa(z) y_1^2 + y_1^k$,
\end{itemize}
where $a \approx b$ denotes $C_k^{-1}b \leq a \leq C_k b$ for a constant $C_k>0$ depending only on $k$.
Additionally, the Taylor expansion of $\tilde\varphi(y)$ at $0_y$ satisfies
\begin{itemize}
  \item[$(iv)$] $ \tilde\varphi(y) - \tilde\varphi(0_y) - \tilde\varphi'(0_y)y_1  = \tilde\varphi^{(k)}(0_y) E_1 + E_2,$
\end{itemize}
where $E_1 \approx y_2$ and $E_2 \approx O \big(\kappa(z)^{\frac{\beta}{k-2}} + |y_2|^{\frac{\beta}{k}} \big) y_2$. 
\end{lemma}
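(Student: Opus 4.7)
The plan is to take $\mathcal{T}_k$ to be the unimodular shear that flattens the tangent line of $\partial\Omega$ at $z$, and then read off all four claims by combining the Taylor expansion of $\rho$ around $z_1$ with the $k$-order degeneracy \eqref{k+b-deg}. Concretely, with $\ell_z(x_1) := \rho(z_1) + \rho'(z_1)(x_1-z_1)$, set
\[
y = \mathcal{T}_k(x) := \begin{pmatrix} 1 & 0 \\ -\rho'(z_1) & 1 \end{pmatrix} x + \begin{pmatrix} -z_1 \\ z_1\rho'(z_1)-\rho(z_1) \end{pmatrix},
\]
so that $y_1 = x_1 - z_1$, $y_2 = x_2-\ell_z(x_1)$, $\det D\mathcal{T}_k = 1$, $\mathcal{T}_k(z)=0_y$, and by convexity $\Omega$ maps locally into $\{y_2>0\}$. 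The new boundary is given by the Taylor remainder $\tilde\rho(y_1) = \rho(y_1+z_1)-\rho(z_1)-\rho'(z_1)y_1$, which satisfies $\tilde\rho(0)=\tilde\rho'(0)=0$ and $\tilde\rho\geq 0$. Writing $\rho = \rho_0 + \bar\rho$ with $\rho_0(x_1)=\frac{\rho^{(k)}(0)}{k!}x_1^k$ and $\bar\rho(x_1) = O(|x_1|^{k+\beta})$, one obtains the explicit binomial formula
\[
\tilde\rho(y_1) = \frac{\rho^{(k)}(0)}{k!}\sum_{j=2}^{k}\binom{k}{j} z_1^{k-j}\,y_1^{j} \;+\; R(y_1,z_1),
\]
where $R$ collects the contributions of $\bar\rho$ and is of order $O((|z_1|+|y_1|)^{k+\beta-j}|y_1|^j)$ for each $j$. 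A similar computation gives $\kappa(z) \approx \rho''(z_1) \approx \frac{\rho^{(k)}(0)}{(k-2)!}z_1^{k-2}$ for $z_1$ small.

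For (i)--(iii), I would perform a two-regime analysis of the binomial sum. When $|y_1|\le |z_1|$ the $j=2$ term $\binom{k}{2}z_1^{k-2}y_1^{2}$ dominates and one recovers $\tilde\rho \approx z_1^{k-2}y_1^{2} \approx \kappa(z)y_1^{2}$; when $|y_1|\ge |z_1|$ the $j=k$ term $y_1^{k}$ dominates. Together these give $\tilde\rho(y_1) \approx \kappa(z)y_1^2 + y_1^k$, proving (i). Differentiating termwise yields $y_1\tilde\rho'(y_1) = \frac{\rho^{(k)}(0)}{k!}\sum_{j=2}^{k} j\binom{k}{j}z_1^{k-j}y_1^{j}$, and $y_1\tilde\rho'(y_1)-\tilde\rho(y_1) = \frac{\rho^{(k)}(0)}{k!}\sum_{j=2}^{k}(j-1)\binom{k}{j}z_1^{k-j}y_1^{j}$. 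Both have the same polynomial structure as $\tilde\rho$, each coefficient remains positive, and the same two-regime argument gives (ii) and (iii). The error $R$ is absorbed for $|z_1|$ and $|y_1|$ sufficiently small.

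For (iv), Lemma~\ref{lem-5.1} and the boundary hypothesis give $\varphi^{(i)}(0)=0$ for $i=2,\dots,k-1$, i.e. $\tilde\varphi^{(i)}(-z_1)=0$ for the same range. Using $\tilde\varphi\in C^{k,\beta}$ and Taylor-expanding each $\tilde\varphi^{(j)}(0_y)$ at $-z_1$, only the term of order $k$ survives,
\[
\tilde\varphi^{(j)}(0_y) = \frac{z_1^{k-j}}{(k-j)!}\tilde\varphi^{(k)}(0_y) + O\bigl(|z_1|^{k-j+\beta}\bigr), \quad j=2,\dots,k-1.
\]
Substituting into the order-$k$ Taylor expansion of $\tilde\varphi(y_1)$ about $0_y$ yields
\[
\tilde\varphi(y_1)-\tilde\varphi(0_y)-\tilde\varphi'(0_y)y_1 = \frac{\tilde\varphi^{(k)}(0_y)}{k!}\sum_{j=2}^{k}\binom{k}{j}z_1^{k-j}y_1^{j} \;+\; \sum_{j=2}^{k-1} O\bigl(|z_1|^{k-j+\beta}\bigr) y_1^{j} + O(|y_1|^{k+\beta}).
\]
The main sum equals $\frac{\tilde\varphi^{(k)}(0_y)}{\rho^{(k)}(0)}\tilde\rho(y_1)$ up to the $\bar\rho$-remainder, so the natural choice is $E_1 := \tilde\rho(y_1)/\rho^{(k)}(0)$ (extended constantly in $y_2$ via \eqref{extphi} on the boundary), giving $E_1 \approx y_2$ via (i) on $\partial\tilde\Omega$. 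The error $E_2$ is handled by noting each mixed term $|z_1|^{k-j+\beta}|y_1|^j$ is dominated by $(|z_1|^\beta + |y_1|^\beta)(z_1^{k-2}y_1^2 + y_1^k)$, which via $\kappa(z)\approx z_1^{k-2}$ and $y_2 \approx \kappa(z)y_1^2 + y_1^k$ (i.e. $|z_1|^\beta \approx \kappa(z)^{\beta/(k-2)}$ and $|y_1|^\beta \lesssim y_2^{\beta/k}$) is exactly $O(\kappa(z)^{\beta/(k-2)} + |y_2|^{\beta/k})y_2$.

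The main obstacle is the bookkeeping in passing from the mixed bivariate polynomial estimates in $(z_1,y_1)$ to the clean form $\kappa(z)^{\beta/(k-2)}+|y_2|^{\beta/k}$ advertised in (iv); this requires carefully comparing $|z_1|^\beta|y_1|^j$ against $z_1^{k-2}y_1^2+y_1^k$ across the two regimes $|y_1|\lessgtr|z_1|$, together with the scaling identifications $\kappa(z)\approx z_1^{k-2}$ and $y_2\approx \tilde\rho(y_1)$ on $\partial\tilde\Omega$. Once this reduction is in place, all four statements follow directly from the explicit Taylor expansions set up in the first step.
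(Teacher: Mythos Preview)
Your overall strategy coincides with the paper's: the same unimodular shear $\mathcal{T}_k$, the same decomposition $\rho=\rho_0+\bar\rho$, and for (iv) the same cascade of Taylor expansions of $\tilde\varphi^{(j)}(0)$ at $-z_1$ using $(\mathcal P1')$. Part (iv) is carried out essentially as in the paper.

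There is, however, a genuine gap in your argument for the \emph{lower} bounds in (i)--(iii). Your two-regime analysis (``the $j=2$ term dominates when $|y_1|\le|z_1|$, the $j=k$ term dominates when $|y_1|\ge|z_1|$'') yields the upper bound immediately, but for the lower bound it only works when $|y_1|/|z_1|$ is either very small or very large. In the intermediate regime $|y_1|\sim|z_1|$, the odd-$j$ terms $\binom{k}{j}z_1^{k-j}y_1^{j}$ are negative whenever $z_1y_1<0$ and are individually of the same order as the even terms, so one cannot simply say ``the $j=2$ term dominates''; your remark that ``each coefficient remains positive'' does not help, since the monomials $z_1^{k-j}y_1^j$ themselves change sign. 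What is actually needed is the uniform polynomial inequality
\[
P_k(t):=(t+1)^k - t^k - kt^{k-1} \;\ge\; c_k\,(t^{k-2}+1)\qquad\text{for all }t\in\R,
\]
together with its companion $Q_k(t)=t^k+k(t+1)^{k-1}-(t+1)^k$ for (iii). The paper isolates these as Lemmas~\ref{p-k} and~\ref{q-k} in the appendix and proves them by induction on even $k$. An alternative you could use: write $P_k(t)=\int_t^{t+1}(t+1-s)\,k(k-1)s^{k-2}\,ds$, which is strictly positive for every real $t$ since $k$ is even; combine this with the asymptotics $P_k(t)\sim\binom{k}{2}t^{k-2}$ as $|t|\to\infty$ and compactness to obtain the uniform lower bound (and note $Q_k(t)=P_k(-t-1)$). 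Either way, this step must be supplied before (i)--(iii) follow.
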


\begin{proof}
By the assumption of $\p\Omega$, we have \eqref{kbdr} locally. By scaling, we may assume that $\rho^{(k)}(0) = k!$, so $\rho(x_1) =  x_1^k + \bar\rho(x_1)$ for some $\bar\rho$ satisfying $\bar\rho(x_1)=O(|x_1|^{k+\beta})$. 

The tangent line of $\p\Omega$ at $z$ is then $l_{z}(x_1) = \big(kz_1^{k-1} +\bar\rho'(z_1)\big) (x_1 - z_1) + z_1^k + \bar\rho(z_1)$. 
Using the coordinate transform 
	\begin{equation}\label{t-k}
		\mathcal T_k:
		\begin{cases}
			y_1 = x_1 - z_1 \\
			y_2 = x_2 - l_{z}(x_{1}) 
		\end{cases}
	\end{equation}
yields $\det D\mathcal T_k =1$, with $\mathcal T_k$ mapping $z$ to $0_y$, the origin in the $(y_1,y_2)$-coordinate.
Similar to \eqref{y2-r0}, $\tilde\Omega = \mathcal T_k (\Omega) \subset \{y_2 >0\}$, and $\p \tilde\Omega$ is locally described by
	\begin{equation*} 
	\begin{split}
		y_2 = \tilde\rho(y_1) &= \Big( (y_1 + z_1)^k - kz_1^{k-1} y_1 - z_1^k \Big) + \Big( \bar\rho(y_1 + z_1) -\bar\rho'(z_1) y_1 - \bar\rho(z_1) \Big) \\
				&=: I_1+I_2.
	\end{split}
	\end{equation*}
By Lemma \ref{p-k}, we have $I_1 \approx |z_1|^{k-2} y_1^2 + |y_1|^k$.
By the Taylor expansion of $\bar\rho$ at $z_1$, $\bar\rho(x_1)=O(|x_1|^{k+\beta})$, we obtain
	\begin{equation*}
	\begin{split}
 		I_2 &=  \frac{1}{2!} \bar\rho^{(2)}(z_1) y_1^2 + \cdots + \frac{1}{k!} \bar\rho^{(k)}(z_1) y_1^k + O (|y_1|^{k+\beta}) \\
			& = O \Big( |z_1|^{\beta} \big( \sum_{i=2}^k |z_1|^{k-i} |y_1|^i \big) \Big) +  O (|y_1|^{k+\beta}) = o(I_1).
	\end{split}
	\end{equation*}
Hence, analogous to \eqref{y2-r3}, we conclude
	\begin{equation}\label{appy2}
		y_2 = \tilde\rho(y_1) \approx |z_1|^{k-2} y_1^2 + |y_1|^k.
	\end{equation}

The curvature of $\p\Omega$ at $z$ satisfies
	\begin{equation}\label{appkappa}
		\kappa(z) \approx \rho''(z_1) = k(k-1) z_1^{k-2} + \bar\rho''(z_1) \approx z_1^{k-2},
	\end{equation}
implying conclusion $(i)$.
Conclusion $(iii)$ follows from Lemma \ref{q-k}, as presented in the appendix. Conclusion $(ii)$ can then be deduced by $(i)$ and $(iii)$.

\vskip5pt
Next, we investigate the boundary function $\tilde\varphi(y)=\varphi(\mathcal T_k^{-1}y)$ for $y\in\partial \tilde\Omega$. 
We extend $\tilde\varphi$ from $\p\tilde\Omega$ to $\tilde D_h = \{y\in \tilde\Omega ~|~ \tilde \rho(y_1) < y_2 < h\}$ for small $h>0$ by
	\begin{equation}\label{extphi1}
		\tilde\varphi(y_1,y_2) = \tilde\varphi(y_1, \tilde\rho(y_1)),\quad (y_1,y_2)\in \tilde D_h. 
	\end{equation}  
By condition $(\mathcal P1')$, we get $\tilde \varphi^{(i)}(-z_1) =0$, $i=2, 3 \cdots, k-1$.
Using Taylor's expansion and the $C^{k,\beta}$ regularity of $\tilde\varphi$, analogous to \eqref{var-2-3} we have
	\begin{equation*}\label{k-var-2-3} 
		\tilde\varphi^{(k-i)}(0) = \frac{1}{i!}z_1^{i}\tilde\varphi^{(k)}(0) + O(|z_1|^{i+\beta}),\quad i=1,2,\cdots,k-2.   
	\end{equation*}
Therefore, analogous to \eqref{taylor-1}, we can express the Taylor expansion as follows:
	\begin{align*}\label{k-taylor-1} 
		\tilde\varphi(y_1, \tilde\rho(y_1)) &= \tilde\varphi(0) + \tilde\varphi'(0)y_1 + \sum_{i=2}^k \frac1{i!} \tilde\varphi^{(i)}(0)y_1^i + O(|y_1|^{k+\beta}) \\
				& = \tilde\varphi(0) + \tilde\varphi'(0)y_1 + \tilde\varphi^{(k)}(0) \Big( \sum_{i=2}^k \frac1{i!(k-i)!} y_1^i z_1^{k-i}  \Big) \\
				&\qquad + O \Big( \sum_{i=2}^k |z_1|^{k-i+\beta} |y_1|^i \Big)+ O(|y_1|^{k+\beta}) \\
				& =:  \tilde\varphi(0) + \tilde\varphi'(0)y_1 +  \tilde\varphi^{(k)}(0) E_1 + E_2. 
	\end{align*}
From \eqref{appy2} and by computation, we have
	\begin{align*} 
		& E_1 = \sum_{i=2}^k \frac1{i!(k-i)!} y_1^i z_1^{k-i}  = \frac{1}{k!}\Big((y_1 + z_1)^k- z_1^k -k y_1z_1^{k-1} \Big) \\
		 	&\qquad  \approx |z_1|^{k-2} |y_1|^2 + |y_1|^k \approx y_2, \\
		& E_2 = O\Big( \sum_{i=2}^k |z_1|^{k-i+\beta} |y_1|^i \Big)+ O(|y_1|^{k+\beta}) = O (|z_1|^{\beta} + |y_1|^{\beta}) y_2.
	\end{align*}
Hence, conclusion $(iv)$ follows from \eqref{appy2} and \eqref{appkappa}.
\end{proof}

After obtaining the estimates of boundary data in the new $y$-coordinates, we can modify the auxiliary function $w$ in \eqref{w-un-0} to be
	\begin{equation}\label{neww}
		w(y) = \frac12y_1^2y_2^{q-\frac2k} + Qy_2^q - My_2 \quad\text{ in } \ \tilde D_h=\tilde\Omega\cap\{y_2<h\}.
	\end{equation}
By choosing $q\in(1,1+\frac1k)$, $Q=\frac{k}{q-1}(q+1-\frac4k)$, $h\leq (\frac{kf_0}{2k-4})^{1/(2q-\frac2k-2)}$ and $M=(k+Q)h^{q-1}$, we can verify \eqref{dw-0} and \eqref{w-0}. Hence, we can apply \eqref{neww} as the barrier function to derive the gradient estimate as stated in Lemma \ref{thm-gra-est} and the H\"older continuity for the normal derivative as in Lemmas \ref{lemma5.3} and \ref{lemma5.4}. 

By Remark \ref{rmk3.1}, we can then obtain the boundary $C^{1,\alpha}$ estimate \eqref{bdryC1a} as in Theorem \ref{thm5.1} for some exponent $\alpha\in(0,\beta/k)$ and constant $C>0$ depending on $C_b, f_0, k, \beta, \Omega$ and $\|\varphi\|_{C^{k,\beta}(\p\Omega)}$. 

In deriving the global $C^{1,\alpha}$ estimate \eqref{e-5.5}, a crucial ingredient, following the lines of Lemma \ref{lem-ghg}, is the lower bound of the angle $\theta_{\tau,e_p}$ as in \eqref{lbtheta}.
For the general case of $k\geq4$, we derive this estimate in the following lemma. 

\begin{lemma}\label{angle-k} 
Assume that $y=(0,y_2)\in  \tilde\Omega$ such that $|y|=y_2>0$ is small and $\{y_3=0\}$ is the support plane of $u$ at $y$.
Let $p\in\p\tilde\Omega \cap S_{h_0}$, where $h_0=\sup\{h | S_h\Subset \tilde\Omega\}>0$.
Then,  
	\begin{equation*}
		\theta_{\tau,e_p} \ge C_k |y|^{\frac{k-1}{k}},
	\end{equation*}
where $e_p = \frac{p-y}{|p-y|}$, $\tau$ is the unit tangential vector of $\p \tilde\Omega$ at $p$, $\theta_{\tau,e_p}\in(0,\pi/2)$ is the angle between the vectors $\tau$ and $e_p$, and the constant $C_k>0$ depends only on $k$.  
\end{lemma}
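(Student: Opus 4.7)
The plan is to adapt the computation \eqref{theta-tp1}--\eqref{theta-tp2} from the $k=4$ proof of \eqref{lbtheta} in Lemma \ref{lem-ghg}, replacing the explicit polynomial identities for $\tilde\rho$ with the qualitative bounds $(i)$--$(iii)$ of Lemma \ref{lem-boundary}, and replacing the $k=4$-specific algebraic manipulation with a general weighted AM--GM step. After possibly reflecting across the $y_2$-axis we may assume $p = (p_1, p_2) = (p_1, \tilde\rho(p_1))$ with $p_1 > 0$, and we orient $\tau$ so that $\tau\cdot e_1 > 0$, which gives $\theta_{\tau, e_1} = \arctan \tilde\rho'(p_1)$ and $\theta_{e_1, e_p} = \arctan \bigl(|p_2 - y_2|/p_1\bigr)$. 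If either $\tilde\rho'(p_1) \geq c_0$ or $|p_2 - y_2|/p_1 \geq c_0$ for a fixed constant $c_0 > 0$, then $\theta_{\tau,e_p} \geq \arctan c_0$ and the conclusion is trivial, so I may assume both quantities are small and $\arctan t \approx t$ there.

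The two geometric sub-cases $p_2 > y_2$ and $p_2 \leq y_2$ correspond to the decompositions $\theta_{\tau,e_p} = \theta_{\tau,e_1} \mp \theta_{e_1, e_p}$, respectively. In both situations the small-angle expansion collapses to the same identity
\[
\theta_{\tau, e_p} \approx \tilde\rho'(p_1) - \frac{p_2 - y_2}{p_1} = \frac{p_1\tilde\rho'(p_1) - \tilde\rho(p_1)}{p_1} + \frac{y_2}{p_1},
\]
using $p_2 = \tilde\rho(p_1)$. Applying Lemma \ref{lem-boundary}$(iii)$, the first fraction is comparable to $(\kappa(z) p_1^2 + p_1^k)/p_1 = \kappa(z) p_1 + p_1^{k-1}$, which is non-negative. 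Discarding the $\kappa(z) p_1$ contribution yields
\[
\theta_{\tau, e_p} \geq c_k\Bigl(p_1^{k-1} + \frac{y_2}{p_1}\Bigr)
\]
for some $c_k > 0$ depending only on $k$.

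To finish, I minimise the right-hand side over $p_1 > 0$. Differentiation gives the optimiser $p_1 = (y_2/(k-1))^{1/k}$, at which the expression equals $C_k^{-1} y_2^{(k-1)/k}$; equivalently this follows from the weighted AM--GM inequality with weights $(k-1)/k$ and $1/k$. Since $|y| = y_2$, this delivers $\theta_{\tau, e_p} \geq C_k^{-1} |y|^{(k-1)/k}$ as required. The only genuine obstacle in this plan is the sign bookkeeping in the two cases $p_2 \gtrless y_2$ needed to see that the same inequality emerges in each, and confirming that one stays in the ``small angle'' regime (whose complementary case is trivial, as noted above); both points are handled exactly as in the $k=4$ argument of Lemma \ref{lem-ghg}.
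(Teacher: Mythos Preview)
Your proposal is correct and follows essentially the same approach as the paper: small-angle approximations for $\theta_{\tau,e_1}$ and $\theta_{e_1,e_p}$, then Lemma~\ref{lem-boundary} combined with an AM--GM step. The paper treats the cases $p_2>y_2$ and $p_2\le y_2$ separately and invokes both parts $(i)$ and $(iii)$ of Lemma~\ref{lem-boundary}, whereas your unification into the single expression $(p_1\tilde\rho'(p_1)-\tilde\rho(p_1))/p_1 + y_2/p_1$ followed by minimisation over $p_1$ using only $(iii)$ is a mild streamlining of the same argument.
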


\begin{proof}
Recall that the new $y$-coordinates in Lemma \ref{lem-boundary} are based at a boundary point $z\in\p\Omega$ in the original $x$-coordinates. 
If $z\in \{x_2 \geq \delta_1\}$ for a constant $\delta_1>0$, then $\kappa(z) \ge \delta_2$ for some $\delta_2>0$, implying that the boundary is locally uniformly convex. 
Therefore, it suffices to consider the case where $z\in \{x_2 < \delta_1\}$ close to the origin.
We will then apply the coordinate change \eqref{t-k} and estimate the angle $\theta_{\tau,e_p}$ in the $y$-coordinates. 

Similar to the proof of \eqref{lbtheta} in Lemma \ref{lem-ghg}, if $p_2>y_2$, then by \eqref{the1} and \eqref{the2}, we have
	\begin{equation*}
		\theta_{\tau,e_p} = \theta_{\tau,e_1} - \theta_{e_1,e_p} \geq \frac{1}{p_1}\big(p_1\tilde\rho'(p_1) - p_2\big).
	\end{equation*}
Then, by $(i)$ and $(iii)$ of Lemma \ref{lem-boundary}, we obtain
	\begin{equation*}
		\theta_{\tau,e_p} \geq C_k\frac{p_2}{p_1} \geq C_kp_2^{\frac{k-1}{k}} = C_k|y|^{\frac{k-1}{k}}.
	\end{equation*}
	
Similar to \eqref{theta-tp2}, if $p_2\leq y_2$, by $(i)$ and $(iii)$ of Lemma \ref{lem-boundary} again, we have 
	\begin{equation*}
	\begin{split}
		\theta_{\tau,e_p} &\geq \frac{y_2}{p_1} + C_k\frac{p_2}{p_1} \geq C_k\Big( (\frac{y_2}{p_1})^{k-1} \frac{p_2}{p_1} \Big)^{1/k} \\
			& \geq C_ky_2^{\frac{k-1}{k}} = C_k|y|^{\frac{k-1}{k}},
	\end{split}
	\end{equation*}
where the constant $C_k$ may vary from line to line but depends only on $k$. 
Hence, the proof of Lemma \ref{angle-k} is complete. 
\end{proof}

\begin{proof}[Proof of Theorem \ref{mt-1}]
Once we have the key ingredients, namely Lemmas \ref{lem-boundary} and \ref{angle-k}, we can obtain the global $C^{1,\alpha}$ estimate 
following the lines of Lemma \ref{lem-ghg}. Together with the boundary $C^{1,\alpha}$ estimates mentioned before Lemma \ref{angle-k}, we can complete the proof of 
Theorem \ref{mt-1}.
\end{proof}

\begin{remark}
\emph{
It is worth mentioning that the quadratic separation condition \eqref{u-sq} is not satisfied if $\varphi \equiv 0$ on  $\p\Omega$, where $\p\Omega\in C^{k+\beta}$ is $k$-order degenerate at $0$ for an even number $k\ge 4$.
However, $(\mathcal P1)$ is satisfied in this case. 
Then, we have the following corollary.
\begin{corollary}
Let $u$ be a convex solution of \eqref{u-fp} with $\varphi\equiv 0$ and $f$ satisfying \eqref{dc} and \eqref{f0}.
Assume that $\Omega$ is a bounded, smooth, and strictly convex domain in $\R^2$, and $\p\Omega$ is $k$-order degenerate at each isolated degenerate point $z\in\p\Omega$ for some even number $k=k(z)\ge 4$.
Then, $u$ is $C^{1,\alpha}(\overline\Omega)$ for some $\alpha \in (0,1)$, and
	\begin{equation*}
		\|u\|_{C^{1,\alpha}(\overline\Omega)} \le C,
	\end{equation*}
where $\alpha, C>0$ are constants depending only on $C_b, f_0$ and $\Omega$. 
\end{corollary}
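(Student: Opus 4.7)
The plan is to reduce this corollary directly to Theorem~\ref{mt-1} combined with the boundary $C^{1,\alpha}$ estimate for locally uniformly convex domains from \cite{CTW-2022}. The decisive observation is that when $\varphi\equiv 0$, every tangential derivative of $\varphi$ vanishes identically; hence $\varphi\in C^{k,\beta}(\partial\Omega)$ for arbitrary $k$ and $\beta$, and condition $(\mathcal{P}1)$ is trivially satisfied at every boundary point in any choice of local coordinates.

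First I would exploit the smoothness and strict convexity of $\Omega$ together with the hypothesis that each degenerate point is isolated: compactness of $\partial\Omega$ then forces the degenerate set $Z=\{z\in\partial\Omega:\kappa(z)=0\}$ to be finite, say $Z=\{z_1,\dots,z_N\}$ with even degeneracy orders $k_1,\dots,k_N\ge 4$. I would pick pairwise disjoint boundary neighborhoods $U_j$ of $z_j$ small enough to contain no other degenerate point, so that $\kappa\ge\kappa_0>0$ uniformly on the compact complement $\partial\Omega\setminus\bigcup_j U_j$, making the boundary locally uniformly convex there.

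Second, on each $U_j$ I would apply Theorem~\ref{mt-1} with $k=k_j$ and any fixed $\beta\in(0,1]$, which is admissible since $\varphi\equiv 0$ lies in $C^{k_j,\beta}(\partial\Omega)$ and satisfies $(\mathcal{P}1)$; this yields a local estimate $\|u\|_{C^{1,\alpha_j}(\overline{U_j^{\,*}})}\le C_j$ on a neighborhood $U_j^{\,*}\subset\overline\Omega$ of $z_j$, with $\alpha_j,C_j$ depending only on $C_b,f_0,k_j,\Omega$. On a neighborhood of the uniformly convex arcs I would invoke the boundary $C^{1,\alpha}$ estimate of \cite{CTW-2022}, which in the uniformly convex setting does not require the quadratic separation condition \eqref{u-sq} and applies here even though $\varphi\equiv 0$ may make the tangent plane at a minimum point of $u$ horizontal. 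Interior $C^{1,\alpha}$ regularity on compact subsets of $\Omega$ is supplied by Theorem~\ref{lem-3.2}.

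Setting $\alpha=\min\{\alpha_0,\alpha_1,\dots,\alpha_N\}$ and gluing the finitely many local estimates via a standard finite open cover of $\overline\Omega$ yields $u\in C^{1,\alpha}(\overline\Omega)$ with the stated bound; since the orders $k_j$ are intrinsic to the geometry of $\partial\Omega$, the resulting constants depend only on $C_b,f_0,\Omega$. There is no genuine obstacle, because the difficult analysis is already encapsulated in Theorem~\ref{mt-1}; the only points deserving attention are the triviality of $(\mathcal{P}1)$ for $\varphi\equiv 0$ and the clean separation of the finitely many degenerate points from the uniformly convex portion of the boundary.
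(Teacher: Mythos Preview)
Your proposal is correct and aligns with the paper's approach. The paper states this corollary inside a remark without a separate proof, simply noting that when $\varphi\equiv 0$ the condition $(\mathcal{P}1)$ is trivially satisfied; the result then follows from Theorem~\ref{mt-1} (whose proof is local near each degenerate point and defers to \cite{CTW-2022} on the uniformly convex portion of $\partial\Omega$), exactly as you have unpacked.
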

}
\end{remark}

\vskip5pt
\section{Homogeneous Monge-Amp\`ere Equation}
 \label{S5}

In this section, we prove the global $C^{1,\alpha}$ regularity for the Dirichlet problem
	\begin{equation}\label{u-f=0.1}
	\begin{split}
		\det D^2u &=0 \quad \, \text{in} \ \ \Omega,\\  
		u &= \varphi \quad\text{on} \ \ \p\Omega,
	\end{split}
	\end{equation}
where $\Omega$ is a bounded convex domain in $\R^2$ and $\p\Omega$ is locally described as in \eqref{x2-rho-1} and \eqref{x2-rho-2}. 
We assume that $\p\Omega\in C^{k+\beta}$, $\beta\in(0,k]$, is $(k,\beta)$-order degenerate at $0$, where $k\geq4$ is an even integer. 
Additionally, we assume that $\varphi\in C^{k+\beta}(\p\Omega)$ satisfies conditions $(\mathcal P1)$ and $(\mathcal P2)$.

From Theorem \ref{lem-3.2} (see the proof of \cite[Lemma 3.2]{CTW-2022}), we have the interior regularity $[u]_{C^{1,1}\{x_0\}} \le C$, provided $\p\Omega, \varphi \in C^{1,1}$, where the constant $C$ depends on $\dist(x_0,\p\Omega)$. 

The global regularity $u \in C^{1,1}(\overline\Omega)$ was obtained in \cite{CNS-1986,GTW-1999} under the assumption that $\Omega$ is uniformly convex and $\p\Omega, \varphi \in C^{3,1}$.
More recently, Caffarelli, Tang and Wang \cite{CTW-2022} proved that $u \in C^{1,\beta/2}(\overline\Omega)$ if $\Omega$ is uniformly convex and $\p\Omega, \varphi \in C^{2+\beta}$ for $\beta\in(0,2]$.

In this paper we deal with non-uniformly convex domains $\Omega$ by adopting the local coordinate transform $\mathcal{T}$ in Lemma \ref{lem-boundary}. 
Specifically, from Theorem \ref{mt-1}, we already have that $u \in C^{1,\alpha}(\overline\Omega)$ for a small $\alpha > 0$. 
We aim to improve this exponent $\alpha$ to $\beta/k$ for the solution $u$ of the homogeneous Monge-Amp\`ere equation \eqref{u-f=0.1}, where $\beta\in(0,k]$.

\begin{theorem}\label{thm-2.1}
Assume that $\p\Omega, \varphi \in C^{k+\beta}$ satisfy the hypotheses of Theorem \ref{mt-2}.
Then the solution $u$ of \eqref{u-f=0.1} is $C^{1,\beta/k}(\overline\Omega)$, and
	\begin{equation*}
		\|u\|_{C^{1,\beta/k}(\overline\Omega)} \le C,
	\end{equation*}
where the constant $C>0$ depends only on $k,\beta,\Omega$ and $\|\varphi\|_{C^{k+\beta}(\p\Omega)}$.
\end{theorem}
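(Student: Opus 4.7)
The plan is to follow the three-stage strategy used in the proof of Theorem \ref{mt-1} --- boundary tangential estimate, boundary normal estimate, then global combination via Lemma \ref{lem-ghg} --- but refine each step so that the H\"older exponent for $Du$ improves from a small $\alpha>0$ to the sharp value $\beta/k$. The homogeneous equation $\det D^2u=0$ offers two decisive structural advantages: first, the barrier $w$ need only be convex rather than satisfy $\det D^2 w\ge f_0>0$, which removes the restriction $q\in(1,1+1/k)$ imposed on the exponent parameter in Lemmas \ref{lemma5.3}--\ref{lemma5.4} and lets us push $q$ all the way up to $1+\beta/k$; second, the interior estimate $[u]_{C^{1,1}\{x_0\}}\le C$ from Theorem \ref{lem-3.2} is available since $\varphi\in C^{1,1}$, replacing the interior $C^{1,\alpha}$ bound used previously.

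My first step would be a refined Taylor expansion of $\tilde\varphi$ on $\p\tilde\Omega$ in the $y$-coordinates of Lemma \ref{lem-boundary}. Under $\varphi\in C^{k+\beta}$ together with $(\mathcal P1)$ and $(\mathcal P2)$, I would show that
\[
\tilde\varphi(y_1,\tilde\rho(y_1))-\tilde\varphi(0_y)-\tilde\varphi'(0_y)y_1 \;=\; P_z(\tilde\rho(y_1)) \;+\; R(y_1),
\]
where $P_z$ is a polynomial of degree at most $1+\lfloor\beta/k\rfloor$ with coefficients built from $\tilde\varphi^{(jk)}(0_y)$ for $j\ge1$, and $R(y_1)=O\big((\kappa(z)^{\beta/(k-2)}+|y_1|^\beta)\,\tilde\rho(y_1)\big)$. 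Condition $(\mathcal P2)$ is precisely what eliminates the intermediate non-$jk$-order tangential derivatives, so that the polynomial part is purely a function of $y_2=\tilde\rho(y_1)$ on $\p\tilde\Omega$, mimicking the structure already visible in Lemma \ref{lem-boundary}(iv) one order higher.

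Next, for the normal derivative at a boundary point $z$, after subtracting an affine function to normalise $u(0_y)=0$, $Du(0_y)=0$, $u\ge 0$, I would employ a barrier of the form
\[
w(y)\;=\;P_z(y_2)\;+\;\tfrac12 y_1^2\,y_2^{q-2/k}\;+\;Q\,y_2^q\;-\;M\,y_2,\qquad q=1+\tfrac{\beta}{k},
\]
in $\tilde D_h=\tilde\Omega\cap\{0<y_2<h\}$, with $Q$ chosen large (using $y_1^2\le Cy_2^{2/k}$ on $\p\tilde\Omega$) to force $\det D^2 w\ge 0$, and $M$ absorbing the error term on the top face $\{y_2=h\}$. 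The polynomial $P_z(y_2)$ matches the leading part of the boundary data on $\p\tilde\Omega$ exactly, while the remaining terms dominate the residual $R$. The comparison principle together with an optimisation in $h$, exactly as in the proofs of Lemmas \ref{lemma5.3} and \ref{lemma5.4}, then yields $|u_\nu(z+h\nu)-u_\nu(z)|\le Ch^{\beta/k}$ uniformly in $z$. The tangential H\"older bound, with the same or better exponent, is immediate from Lemma \ref{lemma5.2} since $\varphi\in C^{1,\beta/k}$.

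Finally, to obtain the global bound \eqref{dges} I would rerun the argument of Lemma \ref{lem-ghg}, replacing the splitting threshold $d_y^4$ in \eqref{rz-} by $d_y^k$, invoking Lemma \ref{angle-k} to obtain $\theta_{\tau,e_p}\gtrsim|y|^{(k-1)/k}$ in the ruling-cone construction, and using the interior $C^{1,1}\{x_0\}$ estimate in place of the interior $C^{1,\alpha}$ one. This produces $0\le u(x)\le C|x-y|^{1+\beta/k}$ in the appropriate ball, which is equivalent to the claim. The main obstacle I anticipate is the endpoint case $\beta=k$ (the $C^{1,1}$ limit): there $P_z$ carries two terms, the residual $R$ is of order $y_2^2$, and the nonlinear correction in the barrier must be tuned to exactly second order in $y_2$ while keeping $\det D^2w\ge 0$. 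Verifying that $(\mathcal P2)$ and the precise error structure of the refined Taylor expansion cooperate to avoid losing a power (or a logarithm) in this limit is where the most delicate algebra will sit.
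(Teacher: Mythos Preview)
Your approach is genuinely different from the paper's, and as written it has a real gap.

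The paper does not use barriers at all in the proof of Theorem~\ref{thm-2.1}. Instead it exploits the explicit structure of the homogeneous solution: by the Rauch--Taylor representation $u(x)=\sup\{\ell(x):\ell\le\varphi\text{ on }\p\Omega,\ \ell\text{ affine}\}$, for every $x_0\in\Omega$ the contact set $\mathcal C_{x_0}$ contains a segment whose extreme points lie on $\p\Omega$. Hence $[u]_{C^{1,\beta/k}\{x_0\}}$ is controlled by $[u]_{C^{1,\beta/k}\{p\}}$ at boundary points $p$, and the whole problem reduces (Lemma~\ref{lem-6.1}) to the following: given a short segment $\overline{pq}$ with $p,q\in\p\Omega$ on which $u$ is affine, show $[u]_{C^{1,\beta/k}\{p\}}\le C_0$. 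After the transform $\mathcal T_k$ at the point $z$ where $\tau_z\parallel\overline{pq}$, this becomes a one-dimensional question about $\varphi$: one sets $\psi(y_2)=\varphi(\rho^{-1}(y_2),y_2)$ and must prove $\psi(p_2+h)\le C|h|^{1+\beta/k}$. The key device is a Rolle-type cascade: since $\varphi'(p_1)=\varphi'(q_1)=0$ and, by $(\mathcal P1)$, $\varphi^{(i)}(-z_1)=0$ for $2\le i\le k-1$, one locates points $s_i$ with $\varphi^{(i)}(s_i)=0$ and obtains $|\varphi^{(i)}(p_1)|\le C\bar a^{k-i+\beta}$ with $\bar a=|p_1|+|z_1|$; condition $(\mathcal P2)$ extends this to $k<i\le k+[\beta]$. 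These bounds, fed into the Taylor expansion of $\varphi$ at $p_1$ together with a dyadic decomposition of $\p\Omega$ near $p$, yield the sharp exponent directly --- no comparison principle, no barrier, no optimisation in $h$.

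Your barrier route, by contrast, inherits the losses built into Lemmas~\ref{lemma5.2}--\ref{lemma5.4} and~\ref{lem-ghg}, and you have not shown how to remove them. The most concrete obstruction is the tangential estimate: Lemma~\ref{lemma5.2} delivers exponent $\alpha/(1+\alpha)$ from $\varphi\in C^{1,\alpha}$, which is capped at $1/2$ and cannot reach $\beta/k$ once $\beta>k/2$; your claim that this step ``is immediate from Lemma~\ref{lemma5.2}'' overlooks that loss. Likewise, the normal-direction argument of Lemmas~\ref{lemma5.3}--\ref{lemma5.4} loses through the case-split parameter $\gamma$, the dependence on the prior exponent $\alpha_0$, and the final optimisation $|\tilde x|=h^{(k-1)/k+\hat\alpha}$, none of which your outline addresses. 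Eliminating all of these simultaneously would require a substantially new argument; the paper's segment-based method sidesteps the issue entirely by never invoking a barrier for the homogeneous equation.
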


\begin{proof}
By Theorem \ref{mt-1}, we have the gradient estimate $u \in C^{0,1}(\overline\Omega)$.
Due to the convexity of $u$, it suffices to prove that, for any $x_0\in\overline\Omega$, 
	\begin{equation*}
		\inf\{ C ~|~ u(x)-\ell_{x_0}(x) \leq C|x-x_0|^{1+\beta/k}\ \text{ for any }\,x\in\overline\Omega\} =:[u]_{C^{1,\beta/k}\{x_0\}} \leq C_0,
	\end{equation*}
where $\ell_{x_0}$ is the support of $u$ at $x_0$, and $C_0$ is a positive constant depending only on $k,\beta,\Omega$ and $\|\varphi\|_{C^{k+\beta}(\p\Omega)}$, but independent of $x_0$. 
By \cite[Theorem 2.8]{RT-1977}, the solution $u$ of \eqref{u-f=0.1} can be expressed as
	\begin{equation*}
		u(x) = \sup \{ \ell(x) ~:~ \ell \le \varphi \text{~on~} \p\Omega \text{~and~} \ell \text{~is~affine}\}.
	\end{equation*}
The contact set $\mathcal C_{x_0}=\{x\in\overline\Omega : u(x)=\ell_{x_0}(x)\}$ is a convex set containing a segment, and the extreme points of $\mathcal C_{x_0}$ must lie on the boundary $\p\Omega$.
Again by the convexity of $u$, we just need to prove 
	\begin{equation}\label{6.5}
		[u]_{C^{1,\beta/k}\{y\}} \le C_0 \quad \text{ for any } y\in \p\Omega.
	\end{equation}

We divide the proof of \eqref{6.5} into the following three cases: 

\noindent\item[$(i)$] There is a point $x_0\in\Omega$ with $|x_0-y| \ge \delta_0$ for some fixed small constant $\delta_0>0$, such that $y \in \mathcal C_{x_0}$. Then, there exists some point $\hat x\in\mathcal C_{x_0}\cap\Omega$ satisfying $\dist(\hat x, \p\Omega) \geq C\delta_0^k$ by \eqref{k+b-deg1} when $\delta_0$ is small. Hence, the estimate \eqref{6.5} follows from the proof of Theorem \ref{lem-3.2}, where $C_0$ depends on $\delta_0$. 

\noindent\item[$(ii)$] There is $x_0\in\Omega$ such that $y \in \mathcal C_{x_0}$, but all line segments in $ \mathcal C_{x_0}$ are very short. Note that if there is one segment in $\mathcal C_{x_0}$ with length greater than $\delta_0$, then by the triangle inequality, there must exist a segment in $\mathcal C_{x_0}$ with the endpoint $y$ whose length is greater than $\frac12\delta_0$, which falls under case $(i)$. 

\noindent\item[$(iii)$] There is no point $x_0\in\Omega$ such that $y \in \mathcal C_{x_0}$. Let $x_i \to y$ be a sequence of points in $\Omega$. Choose an extreme point $y_i \in \p\Omega$ of $\mathcal C_{x_i}$ such that $y_i \to y$.  
Thus, if \eqref{6.5} holds at $y_i$ with a uniform constant $C_0$, taking the limit yields that \eqref{6.5} holds at $y$ as desired.

Therefore, we only need to consider case $(ii)$, which is proved in the following lemma.
Noting that the curvature of $\p\Omega$ has a positive lower bound for $y\in\p\Omega \cap\{x_2 \ge \delta_1\}$, it suffices to consider $y\in\p\Omega \cap\{x_2<\delta_1\}$ near the origin. 
\end{proof}

\begin{lemma}\label{lem-6.1}
Assume the conditions in Theorem \ref{thm-2.1}.
Let $\overline{pq}$ be a segment with $p, q \in \p\Omega$ such that $u$ is linear on $\overline{pq}$. 
Then \eqref{6.5} holds at $p$ and $q$. 
\end{lemma}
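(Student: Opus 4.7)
The plan is to prove $[u]_{C^{1,\beta/k}\{p\}} \leq C_0$ by a barrier/comparison argument in normalized coordinates at $p$, with the linearity of $u$ on $\overline{pq}$ providing a crucial supplementary constraint. First, I apply the coordinate change $\mathcal T_k$ of Lemma \ref{lem-boundary} at $p$, sending $p$ to the origin $0_y$ and locally writing $\p\tilde\Omega = \{y_2 = \tilde\rho(y_1)\}$. Subtracting the affine support $\ell_p$ of $u$ at $p$, the resulting $\hat u := u - \ell_p$ is convex and non-negative with $\hat u(0_y) = 0$, $D\hat u(0_y) = 0$, and still satisfying $\det D^2 \hat u = 0$. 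The hypothesis $u \equiv \ell_p$ on $\overline{pq}$ translates to $\hat u \equiv 0$ on the image segment $\overline{0_y q'}$, where $q' := \mathcal T_k(q) \in \p\tilde\Omega$; in particular, the boundary data $\hat\varphi := \hat u|_{\p\tilde\Omega}$ satisfies $\hat\varphi \geq 0$, $\hat\varphi(0_y) = \hat\varphi(q') = 0$, and $\hat\varphi'(0_y) = 0$. The lemma then reduces to showing $\hat u(y) \leq C|y|^{1+\beta/k}$ for $y \in \overline{\tilde\Omega}$ near $0_y$.

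For the boundary estimate, I expand $\hat\varphi$ using Lemma \ref{lem-boundary}(iv): with $\varphi \in C^{k+\beta}$ satisfying $(\mathcal P1)$ and $(\mathcal P2)$, one obtains $\hat\varphi(y_1, \tilde\rho(y_1)) = c_1 y_2 + O(y_2^{1+\beta/k})$, where $c_1 = \tilde\varphi^{(k)}(0_y)/k! - D_2 \ell_p$ and the higher-order error is controlled by propagating $(\mathcal P2)$ through $\mathcal T_k$ along the lines of the Taylor arguments in \eqref{var-2-3}--\eqref{taylor-1}. Evaluating at $q'$, the vanishing $\hat\varphi(q') = 0$ together with non-negativity forces $c_1 = O((q'_2)^{\beta/k})$, yielding the refined bound $0 \leq \hat\varphi(y) \leq C y_2^{1+\beta/k}$ on $\p\tilde\Omega$ near $0_y$. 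With this in hand, I build a barrier of the form $w(y) = C y_2^{1+\beta/k}$ (or a slight variant incorporating a $y_1$-correction to handle the tangential direction), which is convex with $\det D^2 w = 0$. Choosing constants so that $w \geq \hat u$ on $\p\tilde D_h$ where $\tilde D_h := \tilde\Omega \cap \{y_2 < h\}$ -- using the refined boundary bound on the curved part $\p\tilde D_h \cap \p\tilde\Omega$ and the Lipschitz estimate from Lemma \ref{thm-gra-est} on the flat part $\{y_2 = h\}$ -- the comparison principle for the homogeneous Monge-Amp\`ere equation (justified by the convex envelope representation of \cite{RT-1977}) gives $\hat u \leq w$ in $\tilde D_h$. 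Combined with the tangential estimate of Lemma \ref{lemma5.2} and the symmetric argument at $q$, this establishes \eqref{6.5}.

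The main obstacle is the refined boundary estimate: a priori the coefficient $c_1$ of $y_2$ in $\hat\varphi$ is only bounded, and extracting its effective smallness of order $(q'_2)^{\beta/k}$ requires a careful interplay between the simultaneous vanishing $\hat\varphi(0_y) = \hat\varphi(q') = 0$, the non-negativity of $\hat\varphi$, the condition $(\mathcal P2)$ at the original degenerate point (propagated to $0_y = p$ via $\mathcal T_k$), and the curvature $\kappa(p)$ entering through $\tilde\rho$. When $p$ is very close to the degenerate boundary point, the two scales in Lemma \ref{lem-boundary}(iv), governed respectively by $\kappa(p)^{\beta/(k-2)}$ and $|y_2|^{\beta/k}$, must be balanced; when $p$ is far from the degenerate point, $\p\Omega$ is uniformly convex near $p$ and the argument of \cite{CTW-2022} applies directly, so only the intermediate and near-degenerate regimes require the new analysis described above.
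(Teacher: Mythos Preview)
Your plan has two genuine gaps.

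\textbf{The barrier fails on the flat part of $\partial\tilde D_h$.} With $w(y)=Cy_2^{1+\beta/k}$ you have $w\equiv Ch^{1+\beta/k}$ on $\{y_2=h\}$. But on that segment $|y_1|$ ranges up to order $h^{1/k}$ (by Lemma~\ref{lem-boundary}(i)), and the best a priori bound on $\hat u$ there is $\hat u(y)\le C|y|^{1+\alpha}\le Ch^{(1+\alpha)/k}$ from Theorem~\ref{mt-1}. Since $1+\beta/k>(1+\alpha)/k$ for $k\ge 4$, the inequality $w\ge \hat u$ on $\{y_2=h\}$ cannot hold with a uniform $C$ as $h\to 0$. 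The paper avoids this entirely: it never runs a comparison in $\tilde D_h$, but instead uses the convex-envelope representation to bound $u$ by the linear interpolation between the two boundary arcs, reducing the problem to the one-variable estimate $\psi_p(y_2)\le C|y_2-p_2|^{1+\beta/k}$.

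\textbf{The refined boundary estimate does not follow from your argument.} You transform at $p$ and try to pin down the single coefficient $c_1$ from $\hat\varphi(q')=0$. This yields only $|c_1|\le C\big(\kappa(p)^{\beta/(k-2)}+(q'_2)^{\beta/k}\big)$, and the $\kappa(p)^{\beta/(k-2)}$ contribution is \emph{not} $O(y_2^{\beta/k})$ for small $y_2$; so $\hat\varphi\le Cy_2^{1+\beta/k}$ is false in general. The paper's mechanism is quite different: it transforms at the point $z\in\partial\Omega$ where $\tau_z\parallel\overline{pq}$, so that $p_2=q_2$ in the new coordinates. The two conditions $\varphi'(p_1)=\varphi'(q_1)=0$ then feed a repeated Rolle argument, alternating with the vanishing $\varphi^{(i)}(-z_1)=0$ from $(\mathcal P1)$ and $(\mathcal P2)$, to produce the full ladder of derivative bounds $|\varphi^{(i)}(p_1)|\le C\bar a^{\,k-i+\beta}$ for $i\ge 4$, together with the sharper $|\varphi''(p_1)|\le C\bar a^{\,k-4+\beta}p_1^2$ and $|\varphi'''(p_1)|\le C\bar a^{\,k-4+\beta}p_1$. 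These individual bounds, inserted into the Taylor expansion and tested along the segment $\overline{0p}$ and dyadic tangent segments $L_i$, are what give the $|h|^{1+\beta/k}$ decay; a single scalar constraint on $c_1$ cannot substitute for them.
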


\begin{proof}
The proof follows the approach in \cite{CTW-2022}, where $\Omega$ is assumed to be uniformly convex. 
Here, we will employ a local coordinate transform $\mathcal{T}$, as defined in Lemma \ref{lem-boundary}, to study the degenerate domain. 
Based on prior discussions, it suffices to consider the case when $|p-q|>0$ is very small and $p,q$ are close to the origin.

Let $z\in\p\Omega$ be a point near the origin such that the tangential vector $\tau_z$ of $\p\Omega$ is parallel to $\overline{pq}$.
We may assume $z_1\geq0$ in the following. If $z_1\leq0$, the proof proceeds in a similar manner.
We apply the coordinate transform $\mathcal T=\mathcal T_k$ defined in \eqref{t-k}, ensuring that $\mathcal T(z) = 0$ and $(i)$--$(iv)$ in Lemma \ref{lem-boundary} hold.
Since $\mathcal T$ is affine, the segment $\overline{pq}$ is parallel to the $e_1$-axis, while we continue to use $p, q$ to denote the points in the $y$-coordinates. We may assume that $p_1 >0$ and $q_1<0$, and will prove that \eqref{6.5} holds at $p$. A similar argument will apply to $q$. 

The main difficulty is to estimate the growth of $u$ in the $e_2$-direction using the Dirichlet boundary condition, given that the tangential vector $\tau_p$ is relatively close to $e_1$. 
Recall that we extended $\varphi$ using \eqref{extphi1} as a function independent of $y_2$, namely
	\begin{equation*}\label{bdphi}
		\varphi(y_1,y_2) = \varphi(y_1,\rho(y_1)) \quad \text{ in }\ \{|y_1|<r_0\}\times\mathbb{R},
	\end{equation*}
where $r_0>0$ is a fixed small constant. We continue to use $\varphi, \rho,\Omega$ and $u$
after the transform $\mathcal T$ for simplicity. 
Note that locally $\p\Omega\cap\{y_1>0\}$ can be represented by $y_1=\rho^{-1}(y_2)$ as a function of $y_2$ for $y_2\in(0,p_2+\delta_0)$, where $\delta_0>0$ is a fixed small constant and $\rho^{-1}$ is the inverse of $\rho$. 

We can introduce a function $\psi=\psi_p$, which is independent of $y_1$, defined as
	\begin{equation*}\label{bdpsi}
		\psi(y_1,y_2) = \varphi(\rho^{-1}(y_2), y_2)  \quad \text{ in }\ \mathbb{R}\times (0,p_2+\delta_0).
	\end{equation*}
Note that $\psi(y)=\varphi(y)=u(y)$ for $y\in \p\Omega$ near $p$, and we will reduce the $C^{1,\alpha}$ estimate for $u$ in the $e_2$-direction to that of $\psi$. 
Similarly, by considering the other half of boundary $\p\Omega\cap\{y_1<0\}$, we can define a function $\psi_q$, which is independent of $y_1$. 

By subtracting $\ell_p$ from $u$, we may assume that the support $\ell_p = 0$, such that $u = 0$ on $\overline{pq}$ and $u \ge 0$ in $\Omega$.
It can be verified that  
	\begin{equation*}
		[u]_{C^{1,\beta/k}\{y\}} \le \max \{ [\psi_p]_{C^{1,\beta/k}\{p\}}, [\psi_q]_{C^{1,\beta/k}\{q\}}\} \ \ \text{ for any } y \in \overline{pq}.
	\end{equation*}
In the following we estimate $[\psi_p]_{C^{1,\beta/k}\{p\}}$. The same applies to $[\psi_q]_{C^{1,\beta/k}\{q\}}$. 
For simplicity we drop the subscript $p$ in $\psi_p$. 

Since $u\geq0$ and $\psi=\ell_p=0$ on the segment $\overline{pq}$, it suffices to show that  
	\begin{equation}\label{psi-0}
		\psi(y) \le C|y-p|^{1+\beta/k} \ \text{ for any } y \in \mathcal I := \{ p + s e_2 ~|~ -p_2 < s < \delta_0 \}. 
	\end{equation}
Notice that $\varphi$ satisfies $\varphi(p_1) = \varphi(q_1) =0$, and $\varphi \ge 0$.
Hence $\varphi'(p_1)=0$, $\varphi''(p_1) \ge 0$, and $\varphi'(q_1)=0$, $\varphi''(q_1) \ge 0$.
We divide the proof into two cases.

\noindent{\bf Case 1:}  $\beta \in (0,1]$.
We express $\varphi$ using its Taylor expansion at $p_1$ by
	\begin{equation}\label{taylor-00}
	\begin{split}
		\varphi(y_1) & = \sum_{i=2}^k\varphi^{(i)}(p_1)(y_1-p_1)^i + O(|y_1-p_1|^{k+\beta}).
	\end{split}
	\end{equation}
First, we estimate the derivatives $\varphi^{(i)}(p_1)$ as follows. 
Since $\varphi'(q_1)=\varphi'(p_1)=0$, we have
	\begin{equation}\label{int20}
		\int_{q_1}^{p_1} \varphi''(s)\,ds = \varphi'(p_1) - \varphi'(q_1) = 0.
	\end{equation}
Thus, there must exist a point $s_2\in(q_1,p_1)$ such that $\varphi''(s_2)=0$. 
Given that $\varphi''(q_1)\geq0$ and $\varphi''(p_1)\geq0$, from \eqref{int20} $\varphi''$ has a minimum point $s_3\in(q_1,p_1)$ where $\varphi^{(3)}(s_3)=0$. 
Unless $\varphi''\equiv0$ on $[q_1,p_1]$, one has $\varphi''(s_3)<0$. 
By Lemma \ref{lem-5.1}, we know that $\varphi^{(i)}(-z_1)=0$, $i=2,3,\dots,k-1$ in the $y$-coordinates.  	
Hence, we have $s_3\neq -z_1$. In the event that $\varphi''\equiv0$ on $[q_1,p_1]$, we can freely choose another $s_3$ to ensure that $s_3\neq -z_1$. 
	
Since $\varphi^{(3)}(s_3)=0$ and $\varphi^{(3)}(-z_1)=0$ at two distinct points $s_3$ and $-z_1$, there must exist a point $s_4$ in the interval between $s_3$ and $-z_1$ such that $\varphi^{(4)}(s_4)=0$ with $s_4\neq -z_1$. By induction, there exists a point $s_i$ in the interval between $s_{i-1}$ and $-z_1$ such that $s_i\neq -z_1$ and $\varphi^{(i)}(s_i)=0$ for $i=4,\cdots,k$. 
	
Denote the interval $\I=(-z_1,p_1)\cup(q_1,p_1)$. We get $s_i\in\I$ for all $i=2,\cdots,k$, and the length of $\I$ is bounded by $\bar a:= |p_1| + |z_1|$.
By \eqref{appy2}, we have
	\begin{equation}\label{bar-a}
	\begin{split}
		\bar a &= \left( \left(|p_1|+|z_1|\right)^{k-2} \right)^{\frac{1}{k-2}} \leq \left( C(|p_1|^{k-2}+|z_1|^{k-2}) \right)^{\frac{1}{k-2}} \\
			&\leq \frac{C\left(|p_1|^k+|z_1|^{k-2}p_1^2\right)^{\frac{1}{k-2}}}{p_1^{\frac{2}{k-2}}} \leq Cp_1^{-\frac{2}{k-2}}p_2^{\frac{1}{k-2}}.
	\end{split}
	\end{equation}
Hence, by the assumption $\varphi\in C^{k,\beta}$ we obtain
	\begin{equation*}\label{var-k-b}
		|\varphi^{(k)}(y_1)| \leq C|y_1-s_k|^\beta \leq C\bar a^\beta \ \ \text{ for any }\,y_1\in\I,
	\end{equation*}
and 
	\begin{equation*}\label{var-k-1-b} 
		|\varphi^{(k-1)}(y_1)| \leq \Big| \int_{s_{k-1}}^{y_1}\varphi^{(k)}(s) ds \Big|  \le C \bar a^{1+\beta} \ \ \text{ for any }\,y_1\in\I.
	\end{equation*}
Thus, for any $i=4,\cdots,k$, we get
	\begin{equation}\label{var-k-i-b} 
		|\varphi^{(i)}(y_1)| \leq  C \bar a^{k-i+\beta} \ \ \text{ for any }\,y_1\in\I.
	\end{equation}
For $i=2,3$, since $s_2, s_3\in(q_1,p_1)$, we have
	\begin{equation}\label{var-3-b}
		|\varphi^{(3)}(y_1)| \leq  \Big| \int_{s_3}^{y_1}\varphi^{(4)}(s) ds \Big| \leq C \bar a^{k-4+\beta}p_1 \ \ \text{ for any }\,y_1\in(q_1,p_1),
	\end{equation}
where the last inequality follows from $|y_1-s_3|\leq |q_1-p_1| \leq Cp_1$ due to $(i)$ of Lemma \ref{lem-boundary},
and thus
	\begin{equation}\label{var-2-b} 
		|\varphi''(y_1)| \le \Big| \int_{s_2}^{y_1}\varphi^{(3)}(s) ds \Big|  \le C \bar a^{k-4+\beta}p_1^2 \ \ \text{ for any }\,y_1\in(q_1,p_1).
	\end{equation}

Let $t=y_1-p_1$. Combining \eqref{taylor-00} and \eqref{var-k-i-b}--\eqref{var-2-b}, we obtain 
	\begin{equation} \label{etr}
		\varphi(y_1) \le C \big( \bar a^{k-4+\beta} p_1^2 t^2 + \bar a^{k-4+\beta} p_1 |t|^3 + \sum_{i=4}^k \bar a^{k-i+\beta} |t|^i + |t|^{k+\beta} \big).  
	\end{equation}
Let $\hat \varphi (y_1)$ denote the right-hand side of \eqref{etr}, which is monotone for $t > 0$.
Let $h= y_2 - p_2$. In the variables $(t,h)$, to prove \eqref{psi-0} it suffices to show that
	\begin{equation}\label{psi-1}
		\psi(h+p_2) \le C|h|^{1+\beta/k}\ \ \text{ for any }\, h \in (-p_2, \delta_0).
	\end{equation}
We will verify \eqref{psi-1} for $h$ in different intervals respectively. 
	
In the case $h\in(-p_2, 0]$, define a function $\hat\psi$ of $y_2$ such that
	\begin{equation*}
		\hat\psi(y_2) = \hat\varphi(y_1)\quad\text{ on the segment }\overline{op}=\{\theta p : \theta\in[0,1]\}.
	\end{equation*}
Since $\varphi\leq\hat\varphi$ in \eqref{etr} and $\hat\varphi$ is monotone, we have $\psi\leq\hat\psi$. 
On the segment $\overline{op}$, we have $t=\frac{p_1}{p_2}h$.
From $(i)$ of Lemma \ref{lem-boundary}, $p_1\leq Cp_2^{1/k}$. Since $|h|\leq p_2$, we then have
	\begin{equation}\label{esont}
		|t| \leq C\frac{|h|^{1-\frac1k}}{p_2^{1-\frac1k}}|h|^{\frac1k} \leq C|h|^{\frac1k}.
	\end{equation}
Hence, by the definitions of $\hat\varphi$ and $\hat\psi$, \eqref{esont}, and $|h|\leq p_2$, we get
	\begin{equation*} 
		\hat \psi(y_2)  \leq C \left( \bar a^{k-4+\beta}\frac{p_1^4}{p_2^2}|h|^2 + \sum_{i=4}^k \bar a^{k-i+\beta}\frac{p_1^i}{p_2^i}|h|^i + |h|^{1+\frac{\beta}{k}} \right).
	\end{equation*}
By \eqref{bar-a} and $p_1\leq Cp_2^{1/k}$, we have
	\begin{equation}\label{hat-psi}
	\begin{split}
		\hat \psi(y_2)  &\leq C \left( p_1^{\frac{2(k-\beta)}{k-2}}p_2^{ \frac{k-4+\beta}{k-2}}\left|\frac{h}{p_2}\right|^2 + \sum_{i=4}^k p_1^{\frac{(i-2)k-2\beta}{k-2}}p_2^{ \frac{k-i+\beta}{k-2}}\left| \frac{h}{p_2} \right|^i + |h|^{1+\frac{\beta}{k}} \right) \\
			&\leq C \left( p_2^{1+\frac{\beta}{k}}\left|\frac{h}{p_2}\right|^2 + \sum_{i=4}^k p_2^{1+\frac{\beta}{k}-i}\left| h \right|^i + |h|^{1+\frac{\beta}{k}} \right) \\
			&\leq C |h|^{1+\frac{\beta}{k}}  \left( \left|\frac{h}{p_2}\right|^{1-\frac{\beta}{k}} + \sum_{i=4}^k \left| \frac{h}{p_2} \right|^{i-1-\frac{\beta}{k}} + 1 \right) \leq C |h|^{1+\frac{\beta}{k}}, 
	\end{split}
	\end{equation}
where the last inequality follows from the fact that $|h|\leq p_2$. 
Hence, \eqref{psi-1} is proved for $h\in(-p_2,0]$.

In the case $h\in(0,\delta_0)$, let $t_i = 2^{i-1}p_1$, $h_i = \rho(t_i)$, and $P_i = (t_i, h_i)\in \p\Omega$ for $i\ge 1$.
We will prove \eqref{psi-1} for $y_2\in[h_i,h_{i+1})$, respectively. Define the segment
	\begin{equation*}
		L_i = \left\{ y\in \R^2 ~:~ y_2 = h_i + \rho'(t_i)(y_1 - t_i), \ h_i \le y_2 < h_{i+1} \right\},
	\end{equation*}
which is tangential to $\p\Omega$ at $P_i$. 
Let $\hat \psi_i$ be a function of $y_2$ such that
	\begin{equation*}
		\hat\psi_i(y_2) = \hat\varphi(y_1)\quad\text{ on the segment } L_i.
	\end{equation*}
Then $\psi \le \hat \psi_i$ on $L_i$, and it suffices to prove \eqref{psi-1} for $\hat\psi_i$. 

When $i=1$, the linear relation between $h$ and $t$ is given by
	\begin{equation*}
		h = y_2-h_1 = \rho'(p_1)(y_1-t_1) = \rho'(p_1)t.
	\end{equation*}
By $(i)$ and $(ii)$ of Lemma \ref{lem-boundary}, we get $\rho'(p_1) \approx p_2/p_1$, and hence
	\begin{equation*}
		t \leq C\frac{p_1}{p_2}h.
	\end{equation*}
By $(i)$ of Lemma \ref{lem-boundary}, we have $\rho(2p_1)\leq C\rho(p_1)$ for some constant $C>1$ that depends only on $k$. Consequently,
	\begin{equation} \label{hbip}
		h\leq \rho(2p_1) - \rho(p_1) \leq C\rho(p_1) = Cp_2. 
	\end{equation}
Hence, by the computations in \eqref{hat-psi}, we can obtain that $\hat \psi_1 \le C h^{1+\frac\beta4}$, and the estimate \eqref{psi-1} holds for $i=1$. 

By induction we assume that \eqref{psi-1} holds for ${i-1}$ and will show that it also holds for $i$, $i\geq2$. 
On the line segment $L_i$, we have the linear relation
	\begin{equation}\label{eehi}
	\begin{split}
		h & = \rho'(t_i)(y_1-t_i) + h_i - p_2 \\
			& = \rho'(t_i)(y_1-p_1) + \big( h_i - p_2 - \rho'(t_i)(t_i-p_1) \big)\\
			& =: \rho'(t_i) t + \theta.
	\end{split}
	\end{equation}
By $(i)$ and $(ii)$ of Lemma \ref{lem-boundary}, we have $t_i\rho'(t_i) \leq C \rho(t_i)$. Then by the convexity of $\rho$, it follows that
	\begin{equation*}
		\rho'(t_i) \leq C\frac{\rho(t_i)}{t_i} \leq C\frac{\rho(t_i)-\rho(t_1)}{t_i-t_1} = C\frac{h_i-p_2}{t_i-p_1},
	\end{equation*}
which implies that $\theta \geq -C(h_i-p_2) \geq -Ch$ since $h\geq h_i-p_2$. 
Hence, by \eqref{eehi}, we have
	\begin{equation*} 
		t \le C \frac{h}{\rho'(t_i)}
	\end{equation*}
for a different constant $C>0$. Again, by $(i)$ and $(ii)$ of Lemma \ref{lem-boundary}, $\rho'(t_i)\approx h_i/t_i$. Hence, analog to \eqref{esont}, we have
	\begin{equation}\label{eeth}
		t \le C \frac{t_i}{h_i}h \leq \frac{C h}{h_i^{1-\frac{1}{k}}} \leq Ch^{\frac{1}{k}},
	\end{equation}
where the last inequality follows from $h\leq h_{i+1} \leq Ch_i$, similar to \eqref{hbip}.

Substituting \eqref{eeth} in $\hat\varphi$ in the right-hand side of \eqref{etr} and observing that $p_1\leq t_i$, we obtain 
	\begin{equation*}
		\hat\psi_i(y_2) \leq C \left( \bar a^{k-4+\beta}\frac{t_i^4}{h_i^2}h^2 + \bar a^{k-4+\beta}\frac{t_i^4}{h_i^3}h^3 + \sum_{j=4}^k \bar a^{k-j+\beta}\frac{t_i^j}{h_i^j}h^j + h^{1+\frac{\beta}{k}} \right).
	\end{equation*}
Since $h\leq Ch_i$, the second term is controlled by the first term, allowing us to write   
	\begin{equation}\label{icont}
		\hat\psi_i(y_2) \leq C \left( \bar a^{k-4+\beta}\frac{t_i^4}{h_i^2}h^2 + \sum_{j=4}^k \bar a^{k-j+\beta}\frac{t_i^j}{h_i^j}h^j + h^{1+\frac{\beta}{k}} \right).
	\end{equation}
By $(i)$ of Lemma \ref{lem-boundary}, we have $\frac{h_1}{t_1^2} \approx \kappa(z) + t_1^{k-2} < \kappa(z) + t_i^{k-2} \approx \frac{h_i}{t_i^2}$. Thus, from \eqref{bar-a}, we obtain
	\begin{equation}\label{barai}
		\bar a \leq C\left(\frac{h_1}{t_1^2}\right)^{\frac{1}{k-2}} \leq C\left(\frac{h_i}{t_i^2}\right)^{\frac{1}{k-2}}
	\end{equation}
for a constant $C>0$ independent of $i$. 

Inserting \eqref{barai} in \eqref{icont} and noting that $t_i\leq Ch_i^{1/k}$, we can then deduce, similar to the computation in \eqref{hat-psi} (where $p_1, p_2$ are replaced by $t_i, h_i$, respectively), that 
	\begin{equation}\label{hat-psi-i}
	\begin{split}
		\hat \psi_i(y_2)  &\leq C \left( t_i^{\frac{2(k-\beta)}{k-2}}h_i^{\frac{k-4+\beta}{k-2}}\left(\frac{h}{h_i}\right)^2 + \sum_{j=4}^k t_i^{\frac{(j-2)k-2\beta}{k-2}}h_i^{\frac{k-j+\beta}{k-2}}\left( \frac{h}{h_i} \right)^j + h^{1+\frac{\beta}{k}} \right) \\
			&\leq C \left( h_i^{1+\frac{\beta}{k}}\left(\frac{h}{h_i}\right)^2 + \sum_{j=4}^k h_i^{1+\frac{\beta}{k}-j} h^j + h^{1+\frac{\beta}{k}} \right) \\
			&\leq C h^{1+\frac{\beta}{k}}  \left( \left(\frac{h}{h_i}\right)^{1-\frac{\beta}{k}} + \sum_{j=4}^k \left( \frac{h}{h_i} \right)^{j-1-\frac{\beta}{k}} + 1 \right) \leq C h^{1+\frac{\beta}{k}}, 
	\end{split}
	\end{equation}
where the last inequality follows from $h\leq Ch_i$ on the segment $L_i$. 
We have thus proved \eqref{psi-1}.  

{\bf Case 2:} \ $\beta \in (1,k]$.
It is important to note that the assumptions that $\p\Omega$ is $(k,\beta)$-order degenerate at $0$ (i.e. \eqref{k+b-deg1}) and that $\varphi$ satisfies condition ($\mathcal P2$) are only required at this point. 
In the $y$-coordinate, these assumptions imply that
	\begin{equation}\label{tcnee}
		\varphi^{(i)}(-z_1) = 0 \ \ \text{ for any } i,\ \  k+1\leq i\leq k+[\beta],
	\end{equation}
as noted in Remark \ref{re-5.1}.
Since $\varphi\in C^{k+\beta}$, we have the estimate
	\begin{equation*}\label{fb-kib} 
		|\varphi^{(k+[\beta])}(y_1)| \leq C\bar a^{\beta-[\beta]} \ \ \text{ for any }\,y_1\in\I, 
	\end{equation*}
where $\bar a$ is defined as in \eqref{bar-a} and $\I=(-z_1,p_1)\cup(q_1,p_1)$. By using \eqref{tcnee} and integration, we find
	\begin{equation*} 
		|\varphi^{(k+[\beta]-1)}(y_1)| \leq \Big| \int_{-z_1}^{y_1}\varphi^{(k+[\beta])}(s) ds \Big|  \le C \bar a^{1+\beta-[\beta]} \quad\text{for any } y_1\in\I. 
	\end{equation*}	
Thus, for all $i$ with $k+1\leq i\leq k+[\beta]$, we have
	\begin{equation} \label{b>1-1}
		|\varphi^{(i)}(y_1)| \leq C \bar a^{k-i+\beta} \ \ \text{ for any }\,y_1\in\I.
	\end{equation}		

For $4\leq i\leq k$, the estimate \eqref{var-k-i-b} holds. Hence, for any $i=4,\cdots,k+[\beta]$, we can write
	\begin{equation}\label{b>1-2} 
		|\varphi^{(i)}(y_1)| \leq C\bar a^{k-i+\beta} \ \ \text{ for any }\,y_1\in\I.
	\end{equation} 
For $i=2$ and $i=3$, we have the same estimates as in \eqref{var-2-b} and \eqref{var-3-b}, respectively. 

Consequently, similar to \eqref{etr}, the Taylor expansion of $\varphi$ at $p_1$ yields 
	\begin{equation*}\label{etr2}
		\varphi(y_1) \le C \big( \bar a^{k-4+\beta}|p_1|^2t^2 + \bar a^{k-4+\beta}|p_1| |t|^3 + \sum_{i=4}^{k+[\beta]} \bar a^{k-i+\beta} |t|^i + |t|^{k+\beta} \big).	
	\end{equation*}   
To verify \eqref{psi-1}, we first consider $h\in(-p_2,0]$, as in Case 1. 
Define $\overline{op}, \hat\varphi, \hat\psi$ as before. 
Similar to \eqref{hat-psi}, we have
	\begin{equation*}\label{hp2}
		\hat\psi(y_2) = \hat\varphi(y_1) \leq C |h|^{1+\frac{\beta}{k}}  \left( \left|\frac{h}{p_2}\right|^{1-\frac{\beta}{k}} + \sum_{i=4}^{k+[\beta]} \left| \frac{h}{p_2} \right|^{i-1-\frac{\beta}{k}} + 1 \right) \leq C |h|^{1+\frac{\beta}{k}},
	\end{equation*}
since $|h| \leq p_2$. Therefore, \eqref{psi-1} holds for $h\in(-p_2,0]$.

The verification of \eqref{psi-1} for $h\in[0,\delta_0)$ is similar to that of Case 1, specifically by considering the intervals $[h_i,h_{i+1}]$ for $i=1,2,\cdots$.
The key components are \eqref{eeth} and \eqref{barai}. Following a similar computation to that in \eqref{hat-psi-i}, we can verify \eqref{psi-1}. Since the argument is analogous to that in Case 1, we omit the details here. 
\end{proof}

\begin{remark} 
\emph{
In the proof of Lemma \ref{lem-6.1}, we see that the estimates for higher-order derivatives \eqref{var-k-i-b} are derived differently from those for the second and third-order derivatives \eqref{var-3-b} and \eqref{var-2-b}.
Notably, if the domain $\Omega$ is uniformly convex, namely $k=2$, assumptions $(\mathcal P1)$--$(\mathcal P2)$ (as well as $(\mathcal P3)$ in Section \ref{S6}) are not required. 
For details, we refer readers to \cite{CTW-2022}. 
}
\end{remark}

\vspace{5pt}
\section{Convex Envelop} 
\label{S6}

Recall that the convex envelope $u$ is given in \eqref{u-ce}. Under the hypotheses of Theorem \ref{mt-3}, we show that the proof in Section \ref{S5} applies to $u$ and yields the global $C^{1,\beta/k}$ regularity, where $k \ge 4$ is an even integer and $\beta\in(0,k]$.

The global $C^{1,\alpha}$ for some $\alpha\in(0,1]$ was obtained in \cite{CTW-2022,DF-2015} under the condition that $\p\Omega$ is uniformly convex. In Theorem \ref{mt-3}, $\Omega$ is not uniformly convex. 
By the proof of Lemma \ref{thm-gra-est} and the modification \eqref{neww} for the general case, we conclude that at any point $x_0\in\overline\Omega$, there is a unique support plane of $u$ at $x_0$. 
By definition, we have $u\leq w$ in $\Omega$. 
Since $\Omega$ is strictly convex, we have $u=\varphi$ on $\partial\Omega$, where $\varphi=w|_{\partial\Omega}$.  

\begin{proof}[Proof of Theorem \ref{mt-3}]
Denote $G=\{x\in\overline\Omega ~|~ u(x)<w(x)\}$. 
At $x_0\in\Omega\setminus G$, since $u(x_0)=w(x_0)$ and $u\leq w$, we have 
	\begin{equation}\label{inpts}
		[u]_{C^{1,\beta/k}\{x_0\}} \le [w]_{C^{1,\beta/k}\{x_0\}}. 
	\end{equation}
Hence, to prove Theorem \ref{mt-3}, it suffices to prove 
$[u]_{C^{1,\beta/k}\{x_0\}}\leq C_0$ for $x_0\in G$. 

In fact, if $x_0\in\p\Omega$, we can choose a sequence $\{x_i\}\subset\Omega$ such that $x_i\to x_0$.
Then, by the convexity of $u$ and the uniform estimate $[u]_{C^{1,\beta/k}\{x_i\}}\leq C_0$, it follows that $[u]_{C^{1,\beta/k}\{x_0\}}\leq C_0$. 

For any point $x_0\in G$, by \eqref{u-ce} the contact set $\mathcal{C}_0$ at $x_0$, defined in \eqref{ctset}, must be a convex set with more than one point. 
Let $x_0=\sum_{i=1}^ma_ip_i$, where $a_i>0, \sum a_i=1$, and $p_i\in\overline\Omega\setminus G$ are extreme points of $\mathcal{C}_0$. 
If $[u]_{C^{1,\beta/k}\{p_i\}}\leq C_0$ for all 
$i=1,\cdots,m \, (\leq 3)$, then by the convexity of $u$, we have $[u]_{C^{1,\beta/k}\{x_0\}}\leq C_0$.
 
Therefore, the proof reduces to proving $[u]_{C^{1,\beta/k}\{p_i\}}\leq C_0$ for all $i=1,\cdots,m$. 
Consider an arbitrary segment $\overline{p_ip_j}$. For simplicity, we denote this segment by $\overline{pq}$, where $p, q\in\overline\Omega\setminus G$ and $u$ is linear on $\overline{pq}$. 

If both $p, q\in\Omega$, then from \eqref{inpts}, we have $[u]_{C^{1,\beta/k}\{p\}} \le [w]_{C^{1,\beta/k}\{p\}}$ and $[u]_{C^{1,\beta/k}\{q\}} \le [w]_{C^{1,\beta/k}\{q\}}$, concluding the proof. If both $p, q\in\partial\Omega$, we are in the same situation as in Lemma \ref{lem-6.1}. Then, using the estimate \eqref{6.5}, we complete the proof. 

Hence, we just need to consider the case when $p\in\partial\Omega$ and $q\in\Omega$. In this case, it suffices to obtain $[u]_{C^{1,\beta/k}\{p\}} \le C_0$.
As in the proof of Theorem \ref{thm-2.1}, we may assume that $\p\Omega$ is locally given by \eqref{x2-rho-1} and \eqref{x2-rho-2}, with $p\in\p\Omega\cap\{x_2<\delta_1\}$ being close to the origin.
By subtracting the support of $w$ at $0$, we can assume 
	\begin{equation}\label{dew0}
		w\geq w(0)=0\ \ \text{ and } Dw(0)=0.
	\end{equation} 
 
It is also worth noting that we may assume 
$|p-q|<\delta_3$ for a very small constant $\delta_3>0$. Otherwise, by case $(i)$ in the proof of Theorem \ref{thm-2.1}, the estimate $[u]_{C^{1,\beta/k}\{p\}} \le C_0$ follows from the proof of Theorem \ref{lem-3.2}.
Without loss of generality, we may assume that $p_1>0$. 

The proof follows the approach of Lemma \ref{lem-6.1}, with some modifications for obtaining the derivative estimates. 
First, let $z\in\p\Omega$ near the origin such that the tangential vector $\tau_z$ of $\p\Omega$ is parallel to $\overline{pq}$. 
We may assume $z_1\geq0$ in the following. If $z_1\leq0$, the proof proceeds in a similar manner.
After applying the coordinate change $\mathcal{T}$ from \eqref{t-k}, we have $\mathcal{T}(z)=0$, and $\overline{pq}$ is parallel to the $e_1$-axis in the new $y$-coordinates. By Lemma \ref{lem-boundary}, we have $|q_1|\leq C|p_1|$. 

Similar to \eqref{bar-a}, we define $\bar a:=|p_1|+|z_1|$. 
By \eqref{dew0} and condition $(\mathcal P3)$, since $\mathcal{T}$ is an affine transform, in the $y$-coordinates, we have
	\begin{equation}\label{dew1}
		D^{(i)}w(-z) = 0 \quad \text{ for }\ i=1,\cdots,k+[\beta].
	\end{equation}
Since $w\in C^{k+\beta}(\overline\Omega)$, by \eqref{dew1} and successive integration, we obtain 
	\begin{equation}\label{dew2}
		|D^{(i)}w(p)|, |D^{(i)}w(q)| \leq C\bar a^{k-i+\beta} \quad \text{ for }\ i=1,\cdots,k+[\beta].
	\end{equation}

We define the functions $\bar u, \bar w$, and $\bar\varphi$ by subtracting $\ell_q=a_0+a_1y_1+a_2y_2$, the support of $w$ at $q$, from $u, w, \varphi$, respectively, i.e., 
	\begin{equation}\label{subou} 
		\bar u= u -\ell_q;\quad \bar w=w -\ell_q;\quad \bar\varphi=\bar w|_{\p\Omega}.
	\end{equation}
By \eqref{dew2}, we have $|a_1|, |a_2| \leq C\bar a^{k-1+\beta}$. 
Consequently, $\bar u=0$ on $\overline{pq}$, and $\bar u\geq0$ in $\Omega$. 
From \eqref{subou}, we have
	\begin{equation}\label{dew22}
		\bar\varphi(y_1) = \bar w(y_1,\rho(y_1)) = w(y_1,\rho(y_1)) - (a_0+a_1y_1+a_2\rho(y_1)),
	\end{equation}
where $\rho$ is the boundary function in $y$-coordinates, denoted as $\tilde\rho$ in Lemma \ref{lem-boundary}.
By \eqref{dew1} and \eqref{dew2}, we can verify that for $i=1,\cdots,k+[\beta]$, 
	\begin{align}\label{dew3} 
		& |\bar\varphi^{(i)}(y_1)| \leq C\bar a^{k-i+\beta} \ \ \text{ for any }\, y_1\in\I,
	\end{align}
where $\I:=[-z_1,p_1]\cup[q_1,p_1]$
if $q_1<p_1$; $\I:=[-z_1,q_1]$ if $q_1>p_1$.
This is analogous to \eqref{var-k-i-b} and \eqref{b>1-2} in the proof of Lemma \ref{lem-6.1}. 

To derive the analogue of \eqref{var-3-b} and \eqref{var-2-b}, we need to make the following modification, as $q\notin\p\Omega$. 
Let $\eta=\eta(y_1)$ be a smooth function defined in a neighbourhood of $[q_1,p_1]$ (or $[p_1,q_1]$) such that
$\eta(p_1)=\eta(q_1)=p_2$ and 
	\begin{align}\label{dew4}
		\rho(y_1) \leq \eta(y_1) \leq p_2 &\ \ \text{ for any }\, y_1 \in [q_1,p_1] \text{ (or $[p_1,q_1]$)};\\
		\eta^{(i)}(p_1) = \rho^{(i)}(p_1) &\ \ \text{ for any }\, i=0,1,\cdots,k+[\beta].  \nonumber
	\end{align}
Define $\check\varphi(y_1)=\bar w(y_1,\eta(y_1))$, a function independent of $y_2$. This function $\check\varphi$ satisfies $\check\varphi'(p_1)=0$, $\check\varphi''(p_1) \ge 0$; and $\check\varphi'(q_1)=0$, $\check\varphi''(q_1) \ge 0$.
By the same argument following \eqref{int20}, there exist two points $s_2, s_3\in(q_1,p_1)$ (or $(p_1,q_1)$) such that 
	\begin{equation}\label{dew44}
		\check\varphi''(s_2)=0,\quad \check\varphi^{(3)}(s_3)=0.
	\end{equation} 

For any $y_1\in[q_1,p_1]$ (or $[p_1,q_1]$), we denote $y^\eta:=(y_1,\eta(y_1))$.  
Since $w\in C^{k+\beta}(\overline\Omega)$, by \eqref{dew1} and \eqref{dew4}, similar to \eqref{dew2}, we get
	\begin{equation}\label{dew5}
		 |D^{(i)}w(y^\eta)| \leq C\bar a^{k-i+\beta} \quad \text{ for }\ i=1,\cdots,k+[\beta].
	\end{equation}
After subtracting $\ell_q$ and restricting on the curve $\{y_2=\eta(y_1)\}$, similar to \eqref{dew22}, we have
	\begin{equation*} 
		\check\varphi(y_1) = \bar w(y_1,\eta(y_1)) = w(y_1,\eta(y_1)) - (a_0+a_1y_1+a_2\eta(y_1)).
	\end{equation*}
Hence, by differentiation and \eqref{dew5}, we can obtain the estimate \eqref{dew3} for the function $\check\varphi$. 
In particular, for $i=4$, we find
	\begin{equation}\label{dew6}
		|\check\varphi^{(4)}(y_1)| \leq C\bar a^{k-4+\beta}\ \ \text{ for any }\, y_1 \in [q_1,p_1] \text{ (or $[p_1,q_1]$)}. 
	\end{equation} 
Combining \eqref{dew44} and \eqref{dew6}, we then have
	\begin{equation*} 
		|\check\varphi^{(3)}(p_1)| \leq C\bar a^{k-4+\beta}|p_1| \quad\text{and} \quad |\check\varphi''(p_1)| \leq C\bar a^{k-4+\beta} p_1^2. 
	\end{equation*}	
From the construction \eqref{dew4}, we know that $\check\varphi^{(3)}(p_1)=\bar\varphi^{(3)}(p_1)$ and $\check\varphi''(p_1)=\bar\varphi''(p_1)$. Therefore, we obtain the desired estimates, analogous to \eqref{var-3-b} and \eqref{var-2-b}, that 
	\begin{equation*}\label{dew7} 
		|\bar\varphi^{(3)}(p_1)| \leq C\bar a^{k-4+\beta}|p_1| \quad\text{and} \quad |\bar\varphi''(p_1)| \leq C\bar a^{k-4+\beta} p_1^2. 
	\end{equation*}	
Once having the above derivative estimates at the point $p\in\p\Omega$, we can then plug them into the Taylor expansion \eqref{taylor-00}. 
	
By substituting the function $\bar\varphi$ for $\varphi$ in Lemma \ref{lem-6.1},
the proof can then be completed by following the argument line by line as in Lemma \ref{lem-6.1}.
\end{proof}

\vskip5pt 
\appendix
\section{Some estimates on polynomials} 

\begin{lemma}\label{p-k}
Let $P_k(t)=(t+1)^k-t^k-kt^{k-1}$, where $k\ge 2$ is an even integer.
Then,  
	\begin{equation}\label{p-ct}
		C_k\big(t^{k-2}+1 \big) \leq P_k(t) \leq C_k'\big(t^{k-2}+1 \big), \quad t\in(-\infty, +\infty),
	\end{equation}
for some constants $C_k, C_k'>0$ depending only on $k$.
\end{lemma}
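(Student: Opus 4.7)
The plan is to first observe via the binomial expansion that
\[
P_k(t) = (t+1)^k - t^k - kt^{k-1} = \sum_{i=0}^{k-2} \binom{k}{i} t^i,
\]
so $P_k$ is a polynomial of degree $k-2$ with leading coefficient $\binom{k}{2}$ and constant term $1$. Since $k$ is even, $k-2$ is also even, so $t^{k-2} = |t|^{k-2}$ and the inequality $|t|^i \leq 1 + |t|^{k-2}$ holds for every $0 \leq i \leq k-2$. Summing the binomial coefficients gives the upper bound in \eqref{p-ct} with $C_k' := \sum_{i=0}^{k-2}\binom{k}{i}$ immediately.

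The essential content is the positivity needed for the lower bound. The key step I would take is to apply Taylor's theorem with integral remainder to $f(x) = x^k$ expanded at the base point $t$ and evaluated at $t+1$, which gives the representation
\[
P_k(t) \;=\; k(k-1)\int_0^1 (1-u)(t+u)^{k-2}\, du.
\]
Because $k-2$ is even, the integrand is non-negative on $[0,1]$ for every $t \in \mathbb{R}$. It vanishes on $[0,1]$ only at $u=1$ and (when $t \in [-1,0]$) at the single point $u=-t$, so the integral is strictly positive. Hence $P_k(t) > 0$ for all $t \in \mathbb{R}$.

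To conclude the lower bound, I would use a compactness argument: the quotient $g(t) := P_k(t)/(t^{k-2}+1)$ is continuous and strictly positive on $\mathbb{R}$, and a direct inspection of leading terms yields $g(t) \to \binom{k}{2}$ as $|t|\to\infty$. Therefore $g$ attains a strictly positive minimum, which furnishes the lower bound with $C_k := \inf_{t\in\mathbb{R}} g(t) > 0$. The whole argument is elementary; the only genuinely useful observation is the integral representation, which converts the parity argument into a one-line proof of positivity. Without it, one would be forced into a case analysis of $t \geq 0$ versus $t \leq -1$ via the substitution $s = -1 - t$ (as will be needed in the companion Lemma \ref{q-k}), which is more tedious but ultimately equivalent.
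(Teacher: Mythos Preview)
Your proof is correct and takes a genuinely different route from the paper's. The paper proves the lower bound by induction on the even integer $k$: it establishes the algebraic identity
\[
P_{k+2}(t) = t^2 P_k(t) + (2t+1)\big((t+1)^k - t^k\big) + t^k,
\]
observes that the middle term is non-negative (since $(t+1)^k - t^k$ and $2t+1$ share the same sign when $k$ is even), and then splits into the cases $|t| \ge \frac{1}{2(k+2)}$ and $|t| < \frac{1}{2(k+2)}$, handling the latter by a direct binomial computation. Your approach bypasses all of this: the integral remainder formula
\[
P_k(t) = k(k-1)\int_0^1 (1-u)(t+u)^{k-2}\,du
\]
gives strict positivity in one line, and the compactness argument on $g(t)=P_k(t)/(t^{k-2}+1)$ then delivers the constant. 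Your method is shorter and more conceptual; the trade-off is that the paper's induction yields an explicit (if crude) recursive value $C_{k+2}=\min\{\tfrac14,\ C_k/4(k+2)^2\}$, whereas your compactness step is non-constructive as written. Either way, the result suffices for its use in Lemma~\ref{lem-boundary}.
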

\begin{proof}
The second inequality follows from Young's inequality, so we only need to prove the first inequality. 
It is easy to see that \eqref{p-ct} holds for $k=2$. 

Assume that for $k\geq2$, there is a $C_k>0$ such that
	\begin{equation*}\label{p-ck}
		P_k(t)  \ge C_k (t^{k-2}+1), \quad t\in(-\infty, +\infty).
	\end{equation*}
By computation, we find that
	\begin{equation*}\label{p-ck2}
	\begin{split}
		P_{k+2}(t) &=(t+1)^{k+2}-t^{k+2}-(k+2)t^{k+1} \\
			& = t^2 P_k(t) + (2t+1)(t+1)^k- 2t^{k+1} \\
			& \ge C_k t^2 (t^{k-2}+1) + (2t+1)(t+1)^k- 2t^{k+1}. 
	\end{split}
	\end{equation*}
Since $k$ is even,
	\begin{equation*}\label{p+q} 
		(2t+1)(t+1)^k- 2t^{k+1} - t^k = (2t+1)\Big((t+1)^k - t^k \Big) \ge 0. 
	\end{equation*}
This implies that
	\begin{equation*}
	\begin{split}
		P_{k+2}(t) &\ge C_k t^2 (t^{k-2}+1) + t^k \\
				&= (C_k+1)t^k + C_kt^2.
	\end{split}
	\end{equation*}

For $|t| \ge \frac{1}{2(k+2)}$, we can derive
	\begin{equation}\label{p-ck4} 
	\begin{split}
		P_{k+2}(t) & \ge  (C_k +1)t^k + C_k \left( \frac{1}{2(k+2)}\right)^2 \\
			& \ge \frac{C_k}{4(k+2)^2} \big(t^k +1\big). 
	\end{split}
	\end{equation}
For $|t| < \frac{1}{2(k+2)}$, letting $\binom{a}{b}$ be the binomial coefficient, we have
	\begin{equation*} 
	\begin{split}
		P_{k+2}(t) & = \sum_{p=0, \text{even}}^{k-2} \Big( \binom{k+2}{p} t^p + \binom{k+2}{p+1} t^{p+1} \Big) + \frac{(k+2)!}{k! \cdot 2!}t^{k} \\
			& =  \sum_{p=0, \text{even}}^{k-2}\binom{k+2}{p} t^p  \left( 1 + \frac{k+2-p}{p+1} t \right) + \frac12(k+2)(k+1)t^{k}.
	\end{split}
	\end{equation*}
Note that $k$ is even and the coefficient $\frac{k+2-p}{p+1}$ is monotone decreasing in $p$. Thus, for $|t| < \frac{1}{2(k+2)}$, we have
	\begin{equation}\label{p-ck3}
	\begin{split}
		P_{k+2}(t) & > \big(1+(k+2)t\big) + \frac12 t^k \\ 
			& > \frac12 + \frac12t^k = \frac12(t^k+1). 
	\end{split}
	\end{equation}

Therefore, combining \eqref{p-ck4} and \eqref{p-ck3}, we can obtain the first inequality of \eqref{p-ct} for $P_{k+2}$, where 
$ C_{k+2}= \min \{\frac14, \frac{C_k}{4(k+2)^2}\}.$
\end{proof}

\begin{lemma}\label{q-k}
Let $Q_k(t)=t^k + k(t+1)^{k-1}- (t+1)^{k}$, where $k\ge 2$ is an even integer.
Then, 
	\begin{equation*}
		\bar C_k\big(t^{k-2}+1 \big) \leq Q_k(t) \leq \bar C_k' \big(t^{k-2}+1 \big) , \quad t\in(-\infty, +\infty),
	\end{equation*}
for some positive constants $\bar C_k, \bar C_k'$ depending only on $k$.
\end{lemma}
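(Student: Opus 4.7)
\textbf{Proof proposal for Lemma \ref{q-k}.}

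The plan is to reduce Lemma \ref{q-k} to Lemma \ref{p-k} via a change of variable. Observe the symmetry $Q_k(t)=P_k(-t-1)$: setting $s=-t-1$, so that $t=-s-1$ and $t+1=-s$, and using that $k$ is even (hence $k-1$ is odd), one computes
\begin{equation*}
 t^k=(-s-1)^k=(s+1)^k,\quad k(t+1)^{k-1}=-ks^{k-1},\quad (t+1)^k=s^k,
\end{equation*}
so that $Q_k(t)=(s+1)^k-s^k-ks^{k-1}=P_k(s)=P_k(-t-1)$.

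Once this identity is established, Lemma \ref{p-k} applied at the point $-t-1$ yields
\begin{equation*}
 C_k\bigl((-t-1)^{k-2}+1\bigr)\le Q_k(t)\le C_k'\bigl((-t-1)^{k-2}+1\bigr).
\end{equation*}
Since $k-2$ is even, $(-t-1)^{k-2}=(t+1)^{k-2}\ge 0$, so it remains only to compare $(t+1)^{k-2}+1$ with $t^{k-2}+1$.

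The comparison $(t+1)^{k-2}+1\asymp t^{k-2}+1$ is elementary. For the upper direction, since $k-2$ is even and $|t+1|\le |t|+1$, one bounds $(t+1)^{k-2}\le 2^{k-3}(t^{k-2}+1)$ by Young's/Jensen's inequality, giving $(t+1)^{k-2}+1\le (2^{k-3}+1)(t^{k-2}+1)$. For the lower direction, split into $|t|\le 2$ (where both sides are bounded above and below by positive constants depending only on $k$) and $|t|>2$ (where $|t+1|\ge |t|/2$ yields $(t+1)^{k-2}\ge 2^{-(k-2)}t^{k-2}$, so $(t+1)^{k-2}+1\ge 2^{-(k-2)}(t^{k-2}+1)$). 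Combining the two steps gives the claimed two-sided bound with suitable $\bar C_k,\bar C_k'>0$ depending only on $k$.

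There is no real obstacle here; the only step that requires attention is verifying the parity bookkeeping in $Q_k(t)=P_k(-t-1)$ (which uses $k$ even, $k-1$ odd) and $(-t-1)^{k-2}=(t+1)^{k-2}$ (which uses $k-2$ even). Everything else is a direct appeal to Lemma \ref{p-k} combined with the routine comparison of $(t+1)^{k-2}+1$ and $t^{k-2}+1$.
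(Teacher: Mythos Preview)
Your proposal is correct and follows essentially the same route as the paper: the same substitution $s=-(t+1)$ to obtain $Q_k(t)=P_k(s)$, then the same two-sided comparison of $(t+1)^{k-2}+1$ with $t^{k-2}+1$ via the case split $|t|\le 2$ versus $|t|>2$. Your write-up is in fact slightly more careful about the parity bookkeeping than the paper's.
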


\begin{proof}
Taking $s:=-(t+1)$, we get $Q_k(t)=(s+1)^k - ks^{k-1} - s^k = P_k(s)$.
Then, by Lemma \ref{p-k}, we have
    \begin{equation*}
	    Q_k(t) = P_k(s) \le C'_k\big(s^{k-2} +1\big) = C'_k\big((t+1)^{k-2} +1\big) \le C \big(t^{k-2} + 1\big)
    \end{equation*}
and
    \begin{equation*}
	    Q_k(t) = P_k(s) \ge C_k\big(s^{k-2} +1\big) = C_k\big((t+1)^{k-2} +1\big).
    \end{equation*}
If $|t|>2$, then 
	$$Q_k(t) \ge C_k \big((\frac{t}{2})^{k-2} +1 \big) \ge C \big(t^{k-2} +1 \big).$$ 
If $|t| \le 2$, we have 
	$$Q_k(t) \ge C_k \ge \frac{C_k}{2} \big((\frac{t}{2})^{k-2} +1 \big) \ge C \big(t^{k-2} +1 \big).$$
Thus, the desired result is proved. 
\end{proof}

\end{document}